\DeclareSymbolFont{bbold}{U}{bbold}{m}{n}
\DeclareSymbolFontAlphabet{\mathbbold}{bbold}
\newcommand{\bbone}{\mathbbold{1}}
\title{$\infty$-Operads via Symmetric Sequences}
\newcommand{\igpd}{$\infty$-groupoid}
\newcommand{\igpds}{$\infty$-groupoids}
\newcommand{\Dop}{\simp^{\op}}
\newcommand{\DF}{\bbDelta_{\mathbb{F}}}
\newcommand{\DFop}{\DF^{\op}}
\newcommand{\hDFSop}{\widehat{\bbDelta}_{\mathbb{F},\mathcal{S}}^{\op}}
\newcommand{\hDFUop}{\widehat{\bbDelta}_{\mathbb{F},\mathcal{U}}^{\op}}
\newcommand{\hDFVop}{\widehat{\bbDelta}_{\mathbb{F},\mathcal{V}}^{\op}}
\newcommand{\DFX}{\bbDelta_{\mathbb{F},X}}
\newcommand{\DFXop}{\DFX^{\op}}
\newcommand{\xF}{\mathbb{F}}
\newcommand{\xFeq}{\xF^{\simeq}}
\newcommand{\Finj}{\mathbb{F}_{\txt{inj}}}
\newcommand{\DFV}{\DF^{\mathcal{V}}}
\newcommand{\DFVop}{\DF^{\mathcal{V},\op}}
\newcommand{\Fact}{\txt{Fact}}
\newcommand{\Span}{\txt{Span}}
\DeclareMathOperator{\Tw}{Tw}
\newcommand{\OSpan}{\overline{\txt{Span}}}
\newcommand{\icoopds}{$\infty$-cooperads}
\newcommand{\Coopd}{\txt{Coopd}}
\newcommand{\OpdA}{\Opd^{\txt{aug}}}
\newcommand{\opd}{\txt{opd}}
\newcommand{\CoopdA}{\Coopd^{\txt{coaug}}}
\newcommand{\COLL}{\txt{COLL}}
\newcommand{\Coll}{\txt{Coll}}
\newcommand{\ncsquare}[8]{%
  \begin{tikzcd}%
     #1 \arrow{r}{#5} \arrow{d}{#6} \pgfmatrixnextcell #2 %
      \arrow{d}{#7} \\%
     #3 \arrow{r}{#8} \pgfmatrixnextcell #4%
   \end{tikzcd}%
 }
 \newcommand{\nlcsquare}[4]{\ncsquare{#1}{#2}{#3}{#4}{}{}{}{}}
\newcommand{\eg}{e.g.\@}
\newcommand{\ie}{i.e.\@}
\newcommand{\cf}{cf.\@}
\newcommand{\bbS}{\bbSigma}
\newcommand{\bbL}{\bbLambda}
\newcommand{\hbbS}{\widehat{\bbS}}
\newcommand{\bbSM}{\bbS\mathcal{M}}
\newcommand{\Unf}{\txt{Unf}}
\newcommand{\Algd}{\txt{Algd}}
\begin{document}

\begin{abstract}
  We construct a generalization of the Day convolution tensor product
  of presheaves that works for certain double
  $\infty$-categories. Using this construction, we obtain an \icatl{}
  version of the well-known description of (one-object) operads as
  associative algebras in symmetric sequences; more generally, we show
  that (enriched) \iopds{} with varying spaces of objects can be
  described as associative algebras in a double \icat{} of symmetric
  collections.
\end{abstract}

\maketitle

\tableofcontents

\section{Introduction}
The theory of operads is a general framework for encoding and working
with algebraic structures, first introduced in the early 70s in order
to describe homotopy-coherent algebraic operations on topological
spaces~\cite{MayGeomIter,BoardmanVogt}. Since then, the theory has
found many applications in diverse areas of mathematics --- aside from
algebraic topology, where operads in topological spaces, simplicial
sets, and spectra have numerous uses (see for example
\cite{MayGeomIter,BoardmanVogt,ChingThesis,SmirnovOperads,DwyerHessKnots},
among many others), operads in vector spaces and chain complexes (also
known as \emph{linear operads} and \emph{dg-operads}, respectively)
are by now a well-studied topic in algebra (see for instance
\cite{GinzburgKapranov,LV}), with applications in mathematical physics
(\cf{} \cite{MarklShniderStasheff}) and algebraic geometry (\eg{}
\cite{KrizMay}), while operads in sets have become a standard tool in
combinatorics (\cf{} \cite{GiraudoHab,MendezSetOpds}).

Classically, an operad $\mathbf{O}$ in a symmetric monoidal category
$\mathbf{C}$ consists of a sequence $\mathbf{O}(n)$ of objects of
$\mathbf{C}$, where the symmetric group $\Sigma_{n}$ acts on
$\mathbf{O}(n)$ (this data is called a \emph{symmetric sequence})
together with a unital and associative composition operation. If
$\mathbf{C}$ has colimits indexed by groupoids and the tensor product
preserves these, then this data can be conveniently encoded using the
\emph{composition product} of symmetric sequences. This is a monoidal
structure on symmetric sequences, given by the formula\footnote{Note
  that this is the \emph{reverse} ordering of the product compared to many
  references; this convention corresponds to the one that naturally
  appears in our \icatl{}  construction.}
\[(X \circ Y)(n) \cong \coprod_{k =0}^{\infty} \left(
  \coprod_{i_{1}+\cdots+i_{k} = n} (X(i_{1}) \otimes \cdots \otimes
  X(i_{k})) \times_{\Sigma_{i_{1}} \times \cdots \times
    \Sigma_{i_{k}}} \Sigma_{n} \right) \otimes_{\Sigma_{k}} Y(k);\]
the unit is the symmetric sequence
\[ \bbone(n) =
\begin{cases}
  \emptyset, & n \neq 1 \\
  \bbone, & n = 1
\end{cases},\]
where $\bbone$ is the unit in $\mathbf{C}$. As first observed by
Kelly~\cite{KellyOpd}, an operad is then precisely an associative
algebra with respect to $\circ$: the multiplication map
$\mathbf{O} \circ \mathbf{O} \to \mathbf{O}$ is given by a family of
equivariant maps
\[ \mathbf{O}(k) \otimes \mathbf{O}(i_{1}) \otimes \cdots \otimes
\mathbf{O}(i_{k}) \to \mathbf{O}(i_{1} + \cdots + i_{k}),\]
supplying the operadic composition maps, and similarly the unit map
$\bbone \to \mathbf{O}$ corresponds to a unit
$\bbone \to \mathbf{O}(1)$.

In homotopical settings, this classical notion of operads has
a number of shortcomings, analogous to those afflicting topological or
simplicial categories when we want to work with them only up to
homotopy (\ie{} consider them as models for \icats{}). This motivates
the introduction of a fully homotopy-coherent version of operads,
known as \emph{$\infty$-operads}. Just as in the case of \icats{}, there
are several useful models for \iopds{}, including those of
Lurie~\cite{HA} (which is currently by far the best-developed),
Moerdijk--Weiss~\cite{MoerdijkWeiss},
Cisinki--Moerdijk~\cite{CisinskiMoerdijkDendSeg}, and
Barwick~\cite{BarwickOpCat}. These authors only consider \iopds{} in
spaces, but the formalism has recently been extended to cover \iopds{}
in other symmetric monoidal \icats{} in \cite{enropd}.

The goal of the present paper is to provide another point of view on
(enriched) \iopds{}, by extending to the higher-categorical setting
the description of operads as associative algebras in symmetric
sequences:
\begin{thm*}
  Let $\xFeq$ be the groupoid of finite sets and bijections. If
  $\mathcal{V}$ is a  symmetric
  monoidal \icat{} compatible with colimits indexed by
  \igpds{}\footnote{By this we mean that the underlying \icat{}
    $\mathcal{V}$ has colimits indexed by \igpds{}, and the tensor
    product preserves such colimits in each variable.}, then there exists a monoidal structure on the
  \icat{} $\Fun(\xFeq, \mathcal{V})$ of symmetric sequences
  such that associative algebras are $\mathcal{V}$-enriched
  \iopds{}. Moreover, the tensor product is described by the same
  formula as above.
\end{thm*}

More precisely, this gives a description of \iopds{} with a single
object. It is often convenient to consider the more general notion of
operads with many objects (also known as \emph{coloured operads} or
\emph{symmetric multicategories}), and the term \iopd{} typically
refers to the higher-categorical version of these generalized objects,
which also have a description as associative algebras:
For a set $S$, let \[\xFeq_{S} := \coprod_{n = 0}^{\infty} S^{\times
  n}_{h\Sigma_{n}}\] denote the groupoid with
objects lists $(s_{1},\ldots,s_{n})$ ($s_{i} \in S$) and with a
morphism $(s_{1},\ldots,s_{n}) \to (s'_{1},\ldots,s'_{m})$
given by a bijection
$\sigma \colon \{1,\ldots,n\} \isoto \{1,\ldots,m\}$ in
$\xFeq$ such that $s_{i} = s'_{\sigma(i)}$. Then a (symmetric) \emph{$S$-collection} (or
\emph{$S$-coloured symmetric sequence}) in $\mathbf{V}$ is a functor
$\xFeq_{S}\times S \to \mathbf{V}$. The category
$\Fun(\xFeq_{S}\times S, \mathbf{V})$ again has a composition
product $\circ$, given by a more complicated version of the formula we
gave above, such that an operad with $S$ as its set of objects is
precisely an associative algebra for this monoidal structure. Our work
also gives an \icatl{} version of this many-object composition
product.

More generally, we can describe operads with varying spaces of objects
as associative algebras in a \emph{double category}. We
will call a functor $\xFeq_{S}\times T \to \mathbf{V}$ an
\emph{$(S,T)$-collection} in $\mathbf{V}$. Then we can define a double category
$\COLL(\mathbf{V})$ as follows:
\begin{itemize}
\item Objects are sets, and vertical morphisms are maps of sets.
\item Horizontal morphisms from $S$ to $T$ are $(S,T)$-collections.
\item Composition of horizontal morphisms is given by a version of
  the composition product.
\end{itemize}
An associative algebra in $\COLL(\mathbf{V})$ consists of a set $S$
together with an associative algebra in the category of horizontal
endomorphisms of $S$ with composition as monoidal structure, \ie{} an
associative algebra in $S$-collections with the composition
product. Thus associative algebras are precisely operads, and moreover
morphisms of algebras in $\COLL(\mathbf{V})$ are precisely functors of
operads. We will produce an \icatl{} version of this structure:
\begin{thm*}
  For any symmetric monoidal \icat{} $\mathcal{V}$ compatible with
  colimits indexed by \igpds{}
  there is a double \icat{} $\COLL(\mathcal{V})$ such that
  $\Alg(\COLL(\mathcal{V}))$ is the \icat{} of $\mathcal{V}$-enriched
  \iopds{}.\footnote{In this paper we are considering \iopds{} as
    \emph{algebraic} objects, \ie{} we are not inverting the fully
    faithful and essentially surjective objects or imposing a
    completeness condition. In the terminology of
    Ayala--Francis~\cite{AyalaFrancisFlagged}, $\Alg(\COLL(\mathcal{V}))$ is the \icat{} of
    ``flagged $\mathcal{V}$-\iopds{}''.}
\end{thm*}
The double \icat{} $\COLL(\mathcal{V})$ admits the same description as
its analogue for ordinary categories, except with \igpds{} as
objects.

In a sequel to this paper \cite{opdalg} we apply this description of
\iopds{} to study \emph{algebras} over enriched \iopds{}. In addition,
we hope that it can serve as a starting point for a better
understanding of \emph{bar-cobar} (or \emph{Koszul}) \emph{duality}
for \iopds{}. Over a field of characteristic zero,
Koszul duality for dg-operads was introduced by Ginzburg and Kapranov
\cite{GinzburgKapranov}, and is by now well understood using
model-categorical methods (see \eg{}
\cite{GetzlerJones,FresseKoszul,FresseEn,LV,ValletteHtpyAlgs}).  As a
first step towards an \icatl{} approach to Koszul duality, here we
construct a bar-cobar adjunction between \iopds{} and
\icoopds{}. Following the approach proposed by Francis and
Gaitsgory~\cite{FrancisGaitsgory}, we obtain this as the bar-cobar
adjunction between associative algebras and coassociative coalgebras
(constructed in great generality in \cite{HA}*{\S 5.2.2}) applied to
our monoidal \icat{} of symmetric sequences. This seems likely to 
agree with existing constructions not only in chain complexes over a field of
characteristic $0$, but also in other settings such as spectra \cite{ChingBar,ChingHarper}, where it is
closely related to Goodwillie calculus \cite{ChingThesis},
as well as in $K(n)$-local
spectra, where bar-cobar duality has been constructed and applied
in work of Heuts~\cite{HeutsVn}.

\subsection{Overview of Results}\label{subsec:overview}
Let us now give a more detailed overview of the results of this
paper. The starting point for our construction is the
``coordinate-free'' definition of the composition product due to
Dwyer and Hess~\cite{DwyerHess}*{\S A.1}. They observe that, if
$\xF^{[1],\simeq}$ denotes the groupoid of morphisms of finite
sets and $\xF^{[2],\simeq}$ denotes the groupoid of composable
pairs of morphisms of finite sets, then:
\begin{itemize}
\item Symmetric sequences in $\Set$ are the same thing as symmetric
  monoidal functors $\xF^{[1],\simeq} \to \Set$, with respect to
  the disjoint union in $\xF^{[1],\simeq}$ and the cartesian
  product of sets.
\item Under this identification the composition product of $X$ and $Y$
  corresponds (by \cite{DwyerHess}*{Lemma A.4}) to the left Kan extension, along the functor $\xF^{[2],\simeq} \to \xF^{[1],\simeq}$ given by composition,
  of the restriction of $X \times Y$ from $\xF^{[1],\simeq}
  \times \xF^{[1],\simeq}$ to $\xF^{[2],\simeq}$. In other
  words,
  \[ (X \circ Y)(A \to C) \cong \colim_{(A \to B \to C) \in
      \xF^{[2],\simeq}_{(A \to C)}} X(A \to B) \times Y(B \to C).\]
  If $C \cong *$, then the groupoid $\xF^{[2],\simeq}_{(A \to *)}$ of
  factorizations of $A \to *$ is the groupoid of maps $A \to B$ and
  isomorphisms $B \isoto B'$ under $A$. An isomorphism class of such
  objects corresponds to a decomposition $|A| = i_{1}+\cdots+i_{k}$
  where $k = |B|$, with a division of $A$ into subsets of size
  $i_{j}$. This can be rewritten to recover the previous formula (with
  the division of $A$ corresponding to the product with $\Sigma_{n}$
  for a given partition of $n= |A|$).
\end{itemize}
After a slight reformulation this description is closely related to
Barwick's indexing category $\DF$ for \iopds{}, introduced in \cite{BarwickOpCat}. This is the
category with objects sequences $S_{0} \to S_{1} \to \cdots \to S_{n}$
of morphisms of finite sets, with a map
$(S_{0} \to \cdots \to S_{n}) \to (T_{0} \to \cdots \to T_{m})$ given
by a map $\phi \colon [n] \to [m]$ in $\simp$ and injective morphisms
$S_{i} \to T_{\phi(i)}$ such that the squares
\nolabelcsquare{S_i}{S_j}{T_{\phi(i)}}{T_{\phi(j)}} are cartesian. If
$(\DF)_{[n]}$ denotes the fibre at $[n]$ of the obvious projection
$\DF \to \simp$, then:
\begin{itemize}
\item Symmetric sequences in $\Set$ are the same thing as functors
  $X \colon  (\DF)_{[1]}^{\op} \to \Set$ such that for every object $S
  \to T$ the map
  \[ X(S \to T) \to \prod_{i \in T} X(S_{i} \to *),\]
  induced by the morphisms
  \[
  \begin{tikzcd}
    S_{i} \arrow{r} \arrow{d} \arrow[phantom]{dr}[very near
    start]{\lrcorner} & \{i\} \arrow{d} \\
    S \arrow{r} & T,
  \end{tikzcd}
  \]
  is an isomorphism.
\item Under this identification the composition product 
  of $X$ and $Y$
  corresponds to the left Kan extension, along the functor
  $(\DF)_{[2]}^{\op} \to (\DF)_{[1]}^{\op}$ corresponding to $d_{1}
  \colon [1] \to [2]$,
  of the restriction of $X \times Y$ along the functor
  $(\DF)_{[2]}^{\op} \to (\DF)_{[1]}^{\op} \times (\DF)_{[1]}^{\op}$
  corresponding to $(d_{2},d_{0})$. In other words,
  \[ (X \circ Y)(A \to C) \cong \colim_{(A' \to B \to C') \in
      ((\DF)_{[2]}^{\op})_{/(A \to C)}} X(A' \to B) \times Y(B \to C').\]
  This is equivalent to the previous description since the inclusion
  $\xF^{[2],\simeq}_{(A \to C)}  \to (\DF)_{[2],/(A \to C)}^{\op}$ is
  cofinal: given an object $\xi$ in the target, which is a diagram
  \[
    \begin{tikzcd}
      A \arrow{d} \arrow{rr} & & C \arrow{d} \\
      A' \arrow{r} & B \arrow{r} & C',
    \end{tikzcd}
  \]
  where the square is cartesian,
  the category $(\xF^{[2],\simeq}_{(A \to C)})_{\xi/}$ is a
  contractible groupoid with the single object given by the factorization
  $A \to B \times_{C'}C \to C$.
\end{itemize}
The projection $\DF \to \simp$ is a Grothendieck fibration, and the
corresponding functor $\Phi \colon \Dop \to \Cat$ is a double
category, in the sense that it satisfies the Segal condition
\[ \Phi_{n} \isoto \Phi_{1} \times_{\Phi_{0}} \cdots \times_{\Phi_{0}}
\Phi_{1}.\]
We will obtain the composition product by applying to this double
category a general construction of monoidal structures on functor
categories arising from certain double \icats{}. In fact, our
construction will produce a canonical double \icat{} of which this
monoidal \icat{} is a piece, with the full double \icat{} describing
\iopds{} with varying spaces of objects.

The construction of this double \icat{} can be seen a variation of the \emph{Day
  convolution}~\cite{DayConv} structure on functor
categories: If $\mathbf{C}$ is a small monoidal category and
$\mathbf{V}$ is a monoidal category compatible with colimits, then the functor
category $\Fun(\mathbf{C}, \mathbf{V})$ has a tensor product, given
for functors $F$ and $G$ as the left Kan extension along
$\otimes \colon \mathbf{C} \times \mathbf{C} \to \mathbf{C}$ of the
composite
\[ \mathbf{C} \times \mathbf{C} \xto{F \times G} \mathbf{V} \times
\mathbf{V} \xto{\otimes} \mathbf{V}.\]
This monoidal structure has the property that an associative algebra
in $\Fun(\mathbf{C}, \mathbf{V})$ is the same thing as a lax monoidal
functor $\mathbf{C} \to \mathbf{V}$; more generally, the Day
convolution has a universal property for algebras over non-symmetric
operads.

Day convolution (in the symmetric monoidal setting) was implemented in the \icatl{} context by
Glasman~\cite{GlasmanDay}.\footnote{More recently, Lurie has also given a more
general account~\cite{HA}*{\S 2.2.6}.} In this paper we extend this to
a construction of Day convolution for a class of double \icats{}:
\begin{thm}
  Suppose $\mathcal{M} \to \Dop$ is a suitable double \icat{}. Then
  there is a double \icat{} $\widehat{\mathcal{M}}_{\mathcal{S}}$ such
  that
  for any generalized non-symmetric \iopd{} $\mathcal{O}$ we have a natural
  equivalence\footnote{The right-hand side is the \icat{} of ``Segal
    $\mathcal{O} \times_{\Dop} \mathcal{M}$-spaces'', which are
    functors $\mathcal{O} \times_{\Dop} \mathcal{M} \to \mathcal{S}$
    satisfying certain Segal-type limit conditions; see
    \cref{defn:SegOb} for the precise definition.}
  \[ \Alg_{\mathcal{O}}(\widehat{\mathcal{M}}_{\mathcal{S}}) \simeq
    \Seg_{\mathcal{O} \times_{\Dop} \mathcal{M}}(\mathcal{S}).\]
  The objects of $\widehat{\mathcal{M}}_{\mathcal{S}}$ are functors
  $\mathcal{M}_{0} \to \mathcal{S}$ and the vertical morphisms are
  natural transformations of such functors. A horizontal morphism from
  $F$ to $G$ is a functor $\mathcal{M}_{1,F,G} \to \mathcal{S}$, where
  $\mathcal{M}_{1,F,G} \to \mathcal{M}_{1}$ is the left fibration for
  the composite functor
  \[ \mathcal{M}_{1} \xto{(d_{1,!}, d_{0,!})} \mathcal{M}_{0} \times
    \mathcal{M}_{0} \xto{F \times G} \mathcal{S}.\]
\end{thm}
This theorem summarizes the results of \S\ref{sec:dayconv}: We
construct these double \icats{} in \S\ref{sec:day} using an unfolding
construction introduced in \S\ref{sec:unfold}, and prove the universal
property in \S\ref{sec:dayunivp}. Note that the precise meaning of ``suitable'' we need is quite
restrictive. We also show in \S\ref{sec:daymon}
that we can extract from $\widehat{\mathcal{M}}_{\mathcal{S}}$ a
family of monoidal \icats{} and lax monoidal functors which suffices
to describe associative algebras in
$\widehat{\mathcal{M}}_{\mathcal{S}}$.
Moreover, we consider enriched versions of the theorem, with more
general targets than $\mathcal{S}$, in \S\ref{sec:enrdc}.

To obtain our double \icats{} we use results on
\icats{} of spans due to Barwick~\cite{BarwickMackey}, and
\S\ref{sec:spans} is devoted to a review of this work, with some
slight variations, together with a brief review of non-symmetric
\iopds{} and related structures. 

In \S\ref{sec:appl} we apply our results on Day convolution to
\iopds{}. In \S\ref{subsec:opd} we describe non-enriched \iopds{} as
associative algebras in a double \icats{} of collections in
$\mathcal{S}$, and in \S\ref{subsec:enropd} we extend this to a
description of enriched \iopds{}. More precisely, we obtain an equivalence between
associative algebras in a double \icat{} of collections and \iopds{} in the
sense of Barwick~\cite{BarwickOpCat}, as generalized to enriched
\iopds{} in \cite{enropd}. In \S\ref{subsec:bar} we then apply this
description of \iopds{} to obtain the bar-cobar adjunction between
\iopds{} and \icoopds{}.

As a warm-up to this description of \iopds{}, in \S\ref{subsec:enrcat}
we also consider an additional application of our Day convolution
construction, by showing that enriched \icats{} can be described as
associative algebras.

\subsection{Related Work}
There are at least two other approaches to constructing the
composition product on symmetric sequences $\infty$-categorically:

\subsubsection*{Composition Product from Free Presentably Symmetric
  Monoidal Categories} An alternative approach to defining the
composition product of $S$-coloured symmetric sequences in $\Set$
starts with the observation that
$\Fun(\coprod_{n = 0}^{\infty} S^{n}_{h\Sigma_{n}}, \Set)$ is the free
presentably symmetric monoidal category generated by $S$. If
$\mathbf{C}$ is a presentably symmetric monoidal category we therefore
have a natural equivalence
\[ \Fun(S, \mathbf{C}) \simeq \Fun^{L,\otimes}(\Fun(\coprod_{n = 0}^{\infty} S^{n}_{h\Sigma_{n}},
\Set), \mathbf{C}),\]
where the right-hand side denotes the category of colimit-preserving
symmetric monoidal functors. Taking $\mathbf{C}$ to be
\[\Fun(\xFeq_{S}, \Set) \cong \coprod_{n = 0}^{\infty}
  S^{n}_{h\Sigma_{n}}, \Set),\]
we get a natural equivalence
\[ \Fun(\xFeq_{S} \times S, \Set) \simeq \Fun\left(S,
  \Fun(\xFeq_{S}, \Set)\right)
\simeq \Fun^{L,\otimes}\left(\Fun(\xFeq_{S}, \Set), \Fun(\xFeq_{S}, \Set)\right).\]
Here the right-hand side has an obvious monoidal structure given by
composition of functors, and this corresponds under the equivalence to
the composition product of $S$-coloured symmetric sequences. This
construction is described in \cite{BaezDolanHDA3}*{\S 2.3}. The
one-object variant is much better known; it is attributed to Carboni
in the ``Author's Note'' for \cite{KellyOpd}, and it is also found in
Trimble's preprint \cite{TrimbleLie}. There is also an enriched
version of this construction, for (coloured) symmetric sequences in a
presentably symmetric monoidal category. More recently, this approach
has been further developed in
\cite{GambinoJoyal,FioreGambinoHylandWinskel} where it is shown to
arise from a 2-categorical construction that produces a 2-category of
operads with varying sets of objects (but with \emph{bimodules} of
operads as morphisms rather than functors).

In the \icatl{} setting, it is not hard to see that
$\Fun(\coprod_{n=0}^{\infty} X^{n}_{h \Sigma_{n}}, \mathcal{S})$ is again the
free presentably symmetric monoidal \icat{} generated by a space
$X$. One can thus take the same route to obtain a composition product
on $X$-coloured symmetric sequences in the \icat{} of spaces. In the
one-object case this approach (including its enriched variant) is
worked out in Brantner's thesis~\cite{BrantnerThesis}*{\S 4.1.2}. 
However, this approach has not yet been compared to any of the
established models for \iopds{}.

\subsubsection*{Polynomial Monads}
In \cite{polynomial} we show that \iopds{} with a fixed space of
objects $X$ are equivalent to analytic monads on the slice \icat{}
$\mathcal{S}_{/X}$. These analytic monads can be viewed as associative
algebras under composition in an \icat{} of analytic endofunctors of
$\mathcal{S}_{/X}$. The latter can be identified with $X$-coloured
symmetric sequences in $\mathcal{S}$, so this gives an alternative
description of \iopds{} as associative algebras for the composition
product. Compared to our approach here, this has a number of
advantages:
\begin{itemize}
\item it makes it clear that an \iopd{} can be recovered from its free
  algebra monad,
\item it clarifies the relation between \iopds{} and trees (because
  free analytic monads can be described in terms of trees).
\end{itemize}
It also seems likely that versions of polynomial monads in other
$\infty$-topoi can be used to define operad-like structures that occur
in equivariant and motivic homotopy theory. On the other hand,
polynomial monads do not seem to extend usefully to give a description
of \emph{enriched} \iopds{}.

\subsection{Acknowledgments}
I thank Ben Knudsen for suggesting that the ``coordinate-free''
definition of Dwyer and Hess should lead to an \icatl{} construction
of the composition product. I also thank Lukas Brantner, Hongyi Chu,
Joachim Kock, and Thomas Nikolaus for helpful discussions related to
this project, and Gijs Heuts for pointing out a serious misconception
about bar-cobar duality in the first versions of the paper. This paper
was substantially revised while the author was employed by the IBS
Center for Geometry and Physics in a position funded by grant
IBS-R003-D1 of the Institute for Basic Science of the Republic of
Korea.

\section{Background on Spans and Non-Symmetric
  $\infty$-Operads}\label{sec:spans}

In this section we first review non-symmetric \iopds{} and related
structures in \S\ref{subsec:nsopd}, and then recall some definitions
and results regarding spans from \cite{BarwickMackey}, with some minor
variations to get the generality we need in the next section.

\subsection{Review of Non-Symmetric $\infty$-Operads}\label{subsec:nsopd}
For the reader's convenience, we will briefly review some definitions
and results related to non-symmetric \iopds{} that we will make
frequent use of below. For more details, as well as motivation, we
refer the reader to \cite{enr,nmorita,HA}.
\begin{notation}
  $\simp$ denotes the standard simplicial indexing category, \ie{} the
  category of ordered sets $[n] = \{0,1,\ldots,n\}$ and
  order-preserving maps. We say a map $\phi \colon [n] \to [m]$ is
  \emph{inert} if it is the inclusion of a subinterval, \ie{} $\phi(i)
  = \phi(0)+i$ for all $i$, and \emph{active} if it preserves the
  end-points, \ie{} $\phi(0) = 0$, $\phi(n) = m$. The active and inert
  maps form a factorization system on $\simp$ --- every morphism
  factors uniquely as an active map followed by an inert map. We write
  $\simp^{\txt{int}}$ for the subcategory of $\simp$ containing only
  the inert maps, and $\simp^{\txt{el}}$ for the full subcategory of
  $\simp^{\txt{int}}$ containing only the objects $[0]$ and $[1]$;
  we also use the notation \[\simp^{\txt{el}}_{/[n]} :=
    \simp^{\txt{el}} \times_{\simp^{\txt{int}}}
    \simp^{\txt{int}}_{/[n]}\]
  for the category of inert maps to $[n]$ from $[0]$ and $[1]$
\end{notation}

\begin{defn}
  For $0 \leq i \leq j \leq n$ we write $\rho_{ij}$ for the inclusion
  $[j-i] \cong \{i,i+1,\ldots,j\} \hookrightarrow [n]$. If
  $\mathcal{C}$ is an \icat{} with products, then an \emph{associative
    monoid} in $\mathcal{C}$ is a functor
  $A \colon \Dop \to \mathcal{C}$ such that for every $n$ the map
  $A_{n} \to \prod_{i = 1}^{n} A_{1}$, induced by the maps
  $\rho_{(i-1)i}\colon [1] \to [n]$, is an equivalence.
\end{defn}

\begin{defn}
If $\mathcal{C}$ is an
  \icat{} with finite limits, then a \emph{category object} in
  $\mathcal{C}$ is a functor $X \colon \Dop \to \mathcal{C}$ such that
  for every $n$ the map
  \[ X_{n} \to X_{1} \times_{X_{0}} \cdots \times_{X_{0}} X_{1}\]
  induced by the maps $\rho_{(i-1)i}$ and $\rho_{ii}$, is an equivalence.
\end{defn}

\begin{remark}
  A category object in the \icat{} $\mathcal{S}$ of spaces is a
  \emph{Segal space} in the sense of Rezk~\cite{RezkCSS}. The
  structure of a Segal space describes precisely the ``algebraic''
  structure of an \icat{}, \ie{} a homotopy-coherent composition with
  identities, but to capture the correct equivalences between \icats{}
  one must invert the fully faithful and essentially surjective maps
  between Segal spaces, or equivalently restrict to the full
  subcategory of \emph{complete} Segal spaces.
\end{remark}

\begin{defn}
  A \emph{monoidal \icat{}} is a cocartesian fibration $\mathcal{C}^{\otimes}
  \to \Dop$ such that the corresponding functor $\Dop \to \CatI$ is an
  associative monoid. Similarly, a \emph{double \icat{}} is a
  cocartesian fibration $\mathcal{M} \to \Dop$ such that the
  corresponding functor $\Dop \to \CatI$ is a category object.
\end{defn}

\begin{notation}
  We will use the following terminology to describe double \icats{}
  $\mathcal{M} \to \Dop$:
  \begin{itemize}
  \item an object of $\mathcal{M}_{0}$ is an \emph{object} of the
    double \icat{},
  \item a morphism of $\mathcal{M}_{1}$ is a \emph{vertical morphism}
    of the double \icat{}
  \item an object of $\mathcal{M}_{1}$ is a \emph{horizontal
      morphism},
  \item a morphism in $\mathcal{M}_{1}$ is a \emph{square},
  \item composition of vertical morphisms is composition in the
    \icat{} $\mathcal{M}_{0}$,
  \item vertical composition of squares is composition in $\mathcal{M}_{1}$
  \item composition of horizontal morphisms, as well as horizontal
    composition of squares, is given by the functor
    \[ \mathcal{M}_{1} \times_{\mathcal{M}_{0}} \mathcal{M}_{1}
      \isofrom \mathcal{M}_{2} \xto{d_{1,!}} \mathcal{M}_{1}.\]
  \end{itemize}
\end{notation}

\begin{notation}\label{not:odotcomp}
  Given objects $X,Y \in \mathcal{M}_{0}$, we write $\mathcal{M}(X,Y)$
  for the fibre of $\mathcal{M}_{1} \xto{(d_{1,!},d_{0,!})}
  \mathcal{M}_{0} \times \mathcal{M}_{0}$ at $(X,Y)$, and call this
  the \icat{} of horizontal morphisms from $X$ to $Y$. Given its
  simplicial origin, it is usually less confusing to write composition
  of horizontal morphisms in the non-standard order, and we denote it
  \[ \blank \odot_{Y} \blank \colon \mathcal{M}(X,Y) \times \mathcal{M}(Y,Z)
    \to \mathcal{M}(X,Z).\]
  We will also write $\bbone_{X} \in \mathcal{M}(X,X)$ for the
  horizontal identity.
\end{notation}

\begin{defn}\label{defn:gnsiopd}
  A \emph{generalized non-symmetric \iopd{}} is a functor $p \colon \mathcal{O} \to \Dop$
  such that
  \begin{enumerate}[(i)]
  \item $\mathcal{O}$ has $p$-cocartesian morphisms over all inert
    maps in $\Dop$,
  \item for every $n$ the functor
    $\mathcal{O}_{[n]} \to \mathcal{O}_{[1]}
    \times_{\mathcal{O}_{[0]}} \cdots \times_{\mathcal{O}_{[0]}}
    \mathcal{O}_{[1]}$,
    induced by the cocartesian morphisms over the maps $\rho_{(i-1)i}$ and
    $\rho_{ii}$, is an equivalence,
  \item for every $X \in \mathcal{O}_{[n]}$, $Y \in \mathcal{O}_{[m]}$
    and $\phi \colon [m] \to [n]$ in $\simp$, the map
    \[ \Map^{\phi}_{\mathcal{O}}(X, Y) \to
    \Map^{\rho_{01}\phi}_{\mathcal{O}}(\rho_{01,!}X, Y)
    \times_{\Map^{\rho_{11}\phi}(\rho_{11,!}X, Y)} \cdots
    \times_{\Map^{\rho_{(n-1)(n-1)}\phi}(\rho_{(n-1)(n-1),!}X, Y)}
    \Map^{\rho_{(n-1)n}\phi}_{\mathcal{O}}(\rho_{(n-1)n,!}X, Y)\]
    is an equivalence, where $X \to \rho_{ij,!}X$ is the
    $p$-cocartesian morphism over the inert map $\rho_{ij}$
 and $\Map^{\phi}_{\mathcal{O}}(X,Y)$
    denotes the fibre at $\phi$ of the map $\Map_{\mathcal{O}}(X,Y)
    \to \Map_{\Dop}([n],[m])$.
  \end{enumerate}
  We refer to the cocartesian morphisms over inert morphisms in $\Dop$
  as \emph{inert} morphisms in $\mathcal{O}$.
  A morphism of \gnsiopds{} is a functor over $\Dop$ that preserves
  inert morphisms; we also refer
  to a morphism of \gnsiopds{} $\mathcal{O} \to \mathcal{P}$ as an
  \emph{$\mathcal{O}$-algebra} in $\mathcal{P}$ and write
  $\Alg_{\mathcal{O}}(\mathcal{P})$ for the \icat{} of these. More
  generally, if $\mathcal{O}$ and $\mathcal{P}$ are \gnsiopds{} over
  $\mathcal{Q}$ we write $\Alg_{\mathcal{O}/\mathcal{Q}}(\mathcal{P})$
  for the analogous \icat{} of commutative triangles of morphisms of
  \gnsiopds{}
  \[
    \begin{tikzcd}
      \mathcal{O} \arrow{rr} \arrow{dr} & & \mathcal{P} \arrow{dl} \\
       & \mathcal{Q}.
    \end{tikzcd}
    \]
\end{defn}

\begin{defn}
  A \emph{\nsiopd{}} is a \gnsiopd{} $\mathcal{O}$ such that
  $\mathcal{O}_{[0]} \simeq *$.
\end{defn}

\begin{notation}
  If $\mathcal{O}$ is a \gnsiopd{} and $x$ is an object of
  $\mathcal{O}_{n}$, we will often write $x \to x_{ij}$ for the
  cocartesian morphism over $\rho_{ij}$ for $0 \leq i \leq j \leq n$.
\end{notation}

\begin{lemma}\label{lem:subgnsiopd}
  Suppose $\mathcal{O}$ is a \gnsiopd{}. Let $\mathcal{O}'_{0}$ be a
  full subcategory of $\mathcal{O}_{0}$ and $\mathcal{O}'_{1}$ be a
  full subcategory of $\mathcal{O}_{1}$ such that for $x \in
  \mathcal{O}'_{1}$ the objects $x_{00}$ and $x_{11}$ are in
  $\mathcal{O}'_{0}$. If $\mathcal{O}'$ denotes the full subcategory
  of $\mathcal{O}$ spanned by objects $x$ such that $x_{ii} \in
  \mathcal{O}'_{0}$ and $x_{(i-1)i}$ is in $\mathcal{O}'_{1}$ for all
  $i$, then
  \begin{enumerate}[(i)]
  \item  $\mathcal{O}'$ is also a \gnsiopd{},
  \item the inclusion
  $j \colon \mathcal{O}' \hookrightarrow \mathcal{O}$ preserves inert
  morphisms,
\item for any \gnsiopd{} $\mathcal{P}$ the functor
  \[ j_{*} \colon \Alg_{\mathcal{P}}(\mathcal{O}') \to
    \Alg_{\mathcal{P}}(\mathcal{O}) \]
  given by composition with $i$ is fully faithful, with image the
  algebras $\mathcal{P} \to \mathcal{O}$ such that the restrictions
  $\mathcal{P}_{i} \to \mathcal{O}_{i}$ factor through
  $\mathcal{O}'_{i}$ for $i = 0,1$.
  \end{enumerate}
\end{lemma}
\begin{proof}
 By definition, we have pullback
  squares
  \[
    \begin{tikzcd}
      \mathcal{O}'_{n} \arrow{d} \arrow{r} & \mathcal{O}_{n}
      \arrow{d}{\sim} \\
      \mathcal{O}'_{1} \times_{\mathcal{O}'_{0}} \cdots
      \times_{\mathcal{O}'_{0}} \mathcal{O}'_{1} \arrow{r} &
      \mathcal{O}_{1} \times_{\mathcal{O}_{0}} \cdots
      \times_{\mathcal{O}_{0}} \mathcal{O}_{1},
    \end{tikzcd}
  \]
  so that the left vertical map is an equivalence. Condition (iii) in
  \cref{defn:gnsiopd} is also immediate from $\mathcal{O}'$ being a
  full subcategory. If $x$ is in $\mathcal{O}'$ and $x \to y$ is an
  inert morphism in $\mathcal{O}$, then by the definition of
  $\mathcal{O}'$ the object $y$ is also in $\mathcal{O}'$,
  so $\mathcal{O}'$ inherits cocartesian morphisms over inert
  morphisms from $\mathcal{O}$. This proves (i) and (ii), and (iii) is
  immediate from the definition of $\Alg_{\mathcal{P}}(\mathcal{O}')$
  as a full subcategory of $\Fun_{/\Dop}(\mathcal{P}, \mathcal{O}')$.
\end{proof}

\begin{defn}  
  If $\mathcal{C}$ is an \icat{} with finite products and
  $\mathcal{O}$ is a \gnsiopd{}, then an \emph{$\mathcal{O}$-monoid} in
  $\mathcal{C}$ is a functor $M \colon \mathcal{O} \to \mathcal{C}$ such that
  for every $x \in \mathcal{O}_{[n]}$, the map $M(x) \to
  \prod_{i=1}^{m} M(x_{(i-1)i})$ induced by the cocartesian morphisms
  $x \to x_{(i-1)i}$ over $\rho_{(i-1)i}$, is an equivalence. We write
  $\txt{Mon}_{\mathcal{O}}(\mathcal{C})$ for the \icat{} of
  $\mathcal{O}$-monoids in $\mathcal{C}$, a full subcategory of
  $\Fun(\mathcal{O}, \mathcal{C})$.
\end{defn}

\begin{defn}
  Let $\mathcal{O}$ be a \gnsiopd{}. An \emph{$\mathcal{O}$-monoidal
    \icat{}} is a cocartesian fibration $\mathcal{U}^{\otimes} \to
  \mathcal{O}$ such that the corresponding functor $\mathcal{O} \to
  \CatI$ is an $\mathcal{O}$-monoid; for $X \in \mathcal{O}_{[1]}$ we
  often write $\mathcal{U}_{X}$ for the fibre of
  $\mathcal{U}^{\otimes}$ at $X$. Note that the composite
  $\mathcal{U}^{\otimes} \to \mathcal{O} \to \Dop$ is again a
  \gnsiopd{} (and a double \icat{} if $\mathcal{O}$ is one). We call a
  morphism of \gnsiopds{} over $\mathcal{O}$ between
  $\mathcal{O}$-monoidal \icats{} a \emph{lax $\mathcal{O}$-monoidal
    functor}, and say that it is \emph{$\mathcal{O}$-monoidal} if it
  preserves all cocartesian morphisms over $\mathcal{O}$.
\end{defn}

\begin{defn}
  If $\mathcal{V}^{\otimes} \to \mathcal{O}$ is an $\mathcal{O}$-monoidal \icat{}, we write
  $\mathcal{V}_{\otimes} \to \mathcal{O}^{\op}$ for the corresponding cartesian
  fibration. Then
  $\mathcal{V}^{\op,\otimes} := (\mathcal{V}_{\otimes})^{\op} \to
  \mathcal{O}$ is again an $\mathcal{O}$-monoidal \icat{}; this
  describes the $\mathcal{O}$-monoidal 
  structure on $\mathcal{V}_{X}^{\op}$ ($X \in \mathcal{O}_{[1]}$)
  given by the same operations as
  those on $\mathcal{V}_{X}$.
\end{defn}

\begin{propn}\label{propn:algismon}
  If $\mathcal{M}$ is a \gnsiopd{} and $\mathcal{C}$ is an \icat{}
  with products, then there is a natural equivalence
  \[ \Alg_{\mathcal{M}}(\mathcal{C}) \simeq \Mon_{\mathcal{M}}(\mathcal{C}).\]
\end{propn}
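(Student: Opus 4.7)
The plan is to mirror the standard proof that algebras over an $\infty$-operad in a Cartesian monoidal $\infty$-category are equivalent to monoid objects (the symmetric analogue of this statement is well-known, cf.\ \cite{HA}*{\S 2.4.2}). By definition $\Alg_{\mathcal{M}}(\mathcal{C}) := \Alg_{\mathcal{M}}(\mathcal{C}^{\times})$, where $\mathcal{C}^{\times} \to \Dop$ is the Cartesian monoidal structure on $\mathcal{C}$, characterized up to equivalence by $\mathcal{C}^{\times}_{[n]} \simeq \mathcal{C}^{n}$ with coCartesian pushforward along an inert $\rho_i \colon [1] \to [n]$ given by projection to the $i$-th factor and along the active map $[n] \to [1]$ given by the $n$-fold product.

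I would build $\Phi\colon \Alg_{\mathcal{M}}(\mathcal{C}^{\times}) \to \Fun(\mathcal{M}, \mathcal{C})$ by post-composition with the functor $\pi\colon \mathcal{C}^{\times} \to \mathcal{C}$ encoding the product, i.e., on $\mathcal{C}^{\times}_{[n]}$ the coCartesian pushforward along the active $[n] \to [1]$. For an algebra $A$ and $X \in \mathcal{M}_{[n]}$, the hypothesis that $A$ preserves inert-coCartesian morphisms forces $A(X) \simeq (A(\rho_{1,!}X), \ldots, A(\rho_{n,!}X))$ under $\mathcal{C}^{\times}_{[n]} \simeq \mathcal{C}^{n}$, so $\Phi(A)(X) \simeq \prod_i A(\rho_{i,!}X) \simeq \prod_i \Phi(A)(\rho_{i,!}X)$, which is exactly the monoid condition; hence $\Phi$ lands in $\Mon_{\mathcal{M}}(\mathcal{C})$. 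Conversely, given a monoid $M$, I would construct $\Psi(M)$ with $\Psi(M)(X) := (M(\rho_{1,!}X), \ldots, M(\rho_{n,!}X)) \in \mathcal{C}^{\times}_{[n]}$; the Segal condition on $M$ supplies the coherences needed to make this a functor, and by construction it preserves inert-coCartesian morphisms.

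The main obstacle is to upgrade these pointwise descriptions to a genuine equivalence of $\infty$-categories, since carrying all higher coherences by hand is painful. The cleanest approach is to fix an explicit model of $\mathcal{C}^{\times}$ as the unstraightening of the functor $\Dop \to \CatI$ sending $[n] \mapsto \mathcal{C}^{n}$ (with functoriality given by products and projections), so that $\Fun_{/\Dop}(\mathcal{M}, \mathcal{C}^{\times})$ is literally the $\infty$-category of sections of the base change $\mathcal{C}^{\times} \times_{\Dop} \mathcal{M} \to \mathcal{M}$. Relative straightening-unstraightening over $\mathcal{M}$ then identifies these sections with functors $\mathcal{M} \to \CatI$ landing in the image of the diagonals $\mathcal{C} \to \mathcal{C}^{n}$, and one checks that the ``preservation of inert-coCartesian morphisms'' condition on the algebra side matches tautologically with the Segal/monoid condition on $\Fun(\mathcal{M}, \mathcal{C})$. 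This comparison, carried out uniformly in $\mathcal{M}$, yields the desired natural equivalence.
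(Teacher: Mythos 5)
The paper offers no argument of its own here: the proof is a one-line deferral to the symmetric analogue, \cite{HA}*{Proposition 2.4.1.7}, whose proof carries over to $\Dop$ essentially verbatim. Your first two paragraphs correctly reproduce the content of that argument --- the forward functor is postcomposition with a lax Cartesian structure $\pi \colon \mathcal{C}^{\times} \to \mathcal{C}$, and preservation of inert coCartesian morphisms translates objectwise into the Segal/monoid condition on the resulting functor $\mathcal{M} \to \mathcal{C}$ --- so your overall strategy is the intended one.

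The step you yourself flag as the main obstacle is, however, where your argument breaks: sections of the base change $\mathcal{C}^{\times} \times_{\Dop} \mathcal{M} \to \mathcal{M}$ are \emph{not} identified by relative straightening with functors $\mathcal{M} \to \CatI$. Straightening converts the fibration itself into such a functor; its sections form a (lax) limit of that diagram, not a functor category out of $\mathcal{M}$, so there is nothing for your ``tautological'' matching of conditions to match against. The device that actually closes this gap in \cite{HA} is a model of the Cartesian structure with a mapping-\emph{in} universal property: Lurie embeds $\mathcal{C}^{\times}$ into a larger fibration $\widetilde{\mathcal{C}}^{\times}$ whose fibres are functor categories (indexed by posets of subsets of $\langle n \rangle^{\circ}$; in the non-symmetric case one would use subintervals of $[n]$), arranged so that $\Fun_{/\Dop}(\mathcal{M}, \widetilde{\mathcal{C}}^{\times})$ is literally equivalent to $\Fun(\mathcal{M} \times_{\Dop} Q, \mathcal{C})$ for an explicit auxiliary fibration $Q \to \Dop$. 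One then checks that maps landing in $\mathcal{C}^{\times}$ and preserving inert coCartesian morphisms correspond exactly to functors that are right Kan extended from $\mathcal{M} \subseteq \mathcal{M} \times_{\Dop} Q$ and satisfy the monoid condition there. Some such intermediate object with a mapping-in description is indispensable; with it, the rest of your outline goes through.
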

\begin{proof}
  This is a special case of \cite{patterns2}*{Proposition 5.1} (which
  generalizes the version for symmetric \iopds{}, \cite{HA}*{Proposition 2.4.1.7}).
\end{proof}

The \icatl{} analogue of Day convolution was first constructed by
Glasman~\cite{GlasmanDay} for symmetric monoidal \icats{}. It was
generalized by Lurie~\cite{HA}*{\S 2.2.6} to $\mathcal{O}$-monoidal
\icats{} where $\mathcal{O}$ is a (symmetric) \iopd{} and further
extended by Hinich to \emph{flat} \iopds{} \cite{HinichYoneda}. The
following is a special case of another generalization, proved in \cite{patterns2}:
\begin{propn}\label{propn:dayconvo}
  Let $\mathcal{O}$ be a \gnsiopd{} and $\mathcal{U}^{\otimes} \to
  \mathcal{O}$ an $\mathcal{O}$-monoidal \icat{}. There exists an
  $\mathcal{O}$-monoidal \icat{} $\mathcal{U}_{\mathcal{S}}^{\otimes}
  \to \mathcal{O}$, natural with respect to $\mathcal{O}$-monoidal
  functors, such that for $X \in \mathcal{O}_{[1]}$ we have 
  $(\mathcal{U}_{\mathcal{S}}^{\otimes})_{X} \simeq
  \Fun(\mathcal{U}^{\otimes}_{X}, \mathcal{S})$ and with the universal
  property that for every \gnsiopd{} $\mathcal{P}$ over $\mathcal{O}$
  we have a natural equivalence
  \[
    \Alg_{\mathcal{P}/\mathcal{O}}(\mathcal{U}_{\mathcal{S}}^{\otimes})
    \simeq \Alg_{\mathcal{P} \times_{\mathcal{O}}
      \mathcal{U}^{\otimes}}(\mathcal{S}) \simeq \Mon_{\mathcal{P}
      \times_{\mathcal{O}} \mathcal{U}^{\otimes}}(\mathcal{S}).\]
  Moreover, if $\mathcal{P}(\mathcal{U})^{\otimes}:=
  \mathcal{U}^{\op,\otimes}_{\mathcal{S}}$ then there is a fully
  faithful $\mathcal{O}$-monoidal functor
  \[\mathcal{U}^{\otimes} \hookrightarrow
    \mathcal{P}(\mathcal{U})^{\otimes},\]
  given over $X \in \mathcal{O}_{[1]}$ by the Yoneda embedding
  $\mathcal{U}^{\otimes}_{X} \hookrightarrow
  \mathcal{P}(\mathcal{U}^{\otimes}_{X})$.
\end{propn}
\begin{proof}
  This is a special case of \cite{patterns2}*{Proposition 6.16 and
    Corollary 6.21}.
\end{proof}

\begin{remark}\label{rmk:dayconv1}
  The functor $\mathcal{U}^{\otimes}_{1} \to \mathcal{O}_{1}$ is a
  cocartesian fibration. Let $U \colon \mathcal{O}_{1} \to \CatI$
  denote the corresponding functor. Then $\mathcal{U}^{\otimes}_{\mathcal{S},1}
  \to \mathcal{O}_{1}$ is the cartesian fibration for the functor
  $\Fun(U,\mathcal{S}) \colon \mathcal{O}_{1}^{\op} \to \CatI$ defined
  by composition, or
  equivalently the cocartesian fibration for the induced functor given by
  the left adjoints, \ie{} left Kan extensions along the functors
  $U(f)$ for $f$ in $\mathcal{U}^{\otimes}_{1}$.
\end{remark}

\begin{defn}\label{defn:SegOb}
  Let $\mathcal{C}$ be an \icat{} with pullbacks and $\mathcal{O}$ a
  \gnsiopd{}. A \emph{Segal $\mathcal{O}$-object} in $\mathcal{C}$ is
  a functor $F \colon \mathcal{O} \to \mathcal{C}$ such that for every
  object $X \in \mathcal{O}$ over $[n] \in \simp$, the morphism
  \[ F(X) \to F(X_{01}) \times_{F(X_{11})} \cdots \times_{F(X_{(n-1)(n-1)})}
    F(X_{(n-1)n}) \]
  is an equivalence. We write
  $\Seg_{\mathcal{O}}(\mathcal{C})$ for the full subcategory of
  $\Fun(\mathcal{O}, \mathcal{C})$ spanned by the Segal
  $\mathcal{O}$-objects.
\end{defn}

\begin{propn}\label{propn:Segfib}
  Let $\mathcal{O}$ be a \gnsiopd{}. The restriction functor
  \[ \Seg_{\mathcal{O}}(\mathcal{S}) \to \Fun(\mathcal{O}_{0},
    \mathcal{S}) \]
  is a cartesian fibration, and the fibre at $\Xi \colon
  \mathcal{O}_{0}\to \mathcal{S}$ is equivalent to
  $\Mon_{\mathcal{O}_{\Xi}}(\mathcal{S})$ where $\mathcal{O}_{\Xi} \to
  \mathcal{O}$ is the left fibration for the functor $\mathcal{O} \to
  \mathcal{S}$ obtained as the right Kan extension of $\Xi$ along the
  inclusion $\mathcal{O}_{0} \hookrightarrow \mathcal{O}$.
\end{propn}
\begin{proof}
  As \cite{enrcomp}*{Theorem 7.5}.
\end{proof}

\begin{defn}
  Let $\mathcal{O}$ be a \gnsiopd{} and let $\mathcal{U}^{\otimes}$ be
  an $\mathcal{O}$-monoidal \icat{}. We write
  \[\Algd_{\mathcal{O}}(\mathcal{U}) \to
    \Fun(\mathcal{O}_{0},\mathcal{S})\]
  for the cartesian fibration corresponding to the functor $X \mapsto
  \Alg_{\mathcal{O}_{X}/\mathcal{O}}(\mathcal{U})$ and refer to its
  objects as \emph{$\mathcal{O}$-algebroids} in $\mathcal{U}$. 
\end{defn}

\begin{ex}\label{ex:enrcat}
  $\Dop$-algebroids in a monoidal \icat{} $\mathcal{V}$ are algebras
  in $\mathcal{V}$ for the family $\Dop_{X}$ ($X \in \mathcal{S}$) of
  \gnsiopds{}. These were called \emph{categorical algebras} in
  \cite{enr}, where they were used to model \icats{} enriched in $\mathcal{V}$.
\end{ex}

\begin{remark}
  \cref{propn:Segfib} and \cref{propn:algismon} identify
  $\Algd_{\mathcal{O}}(\mathcal{S})$ with
  $\Seg_{\mathcal{O}}(\mathcal{S})$. If $\mathcal{U}^{\otimes}$ is a
  small $\mathcal{O}$-monoidal \icat{} then the natural equivalence of
  \cref{propn:dayconvo} gives an equivalence
  \[
    \Alg_{\mathcal{O}_{\Xi}/\mathcal{O}}(\mathcal{U}^{\otimes}_{\mathcal{S}})
    \simeq \Alg_{\mathcal{U}^{\otimes}_{\Xi}}(\mathcal{S}),
  \]
  natural in $\Xi$, and so an equivalence    
  \[ \Algd_{\mathcal{O}}(\mathcal{U}^{\otimes}_{\mathcal{S}}) \simeq
    \Algd_{\mathcal{U}^{\otimes}}(\mathcal{S}) \simeq
    \Seg_{\mathcal{U}^{\otimes}}(\mathcal{S}).\]
  Combined with the $\mathcal{O}$-monoidal Yoneda embedding, we get:
\end{remark}
\begin{cor}
  Let $\mathcal{O}$ be a \gnsiopd{} and $\mathcal{U}^{\otimes}$ a
  small $\mathcal{O}$-monoidal \icat{}. Then there is a fully faithful functor
  \[ \Algd_{\mathcal{O}}(\mathcal{U}) \hookrightarrow
    \Seg_{\mathcal{U}^{\op,\otimes}}(\mathcal{S})\]
  with image those Segal $\mathcal{U}^{\op,\otimes}$-spaces $\Phi$ such
  that for every $x \in \mathcal{O}_{[1]}$, $p \in \Phi(x_{00})$, and $q
  \in \Phi(x_{11})$ the presheaf 
  \[ \Phi_{x,p,q} \colon (\mathcal{U}^{\otimes}_{x})^{\op} \simeq
    \mathcal{U}^{\op,\otimes}_{x} \xto{\Phi} \mathcal{S}_{/\Phi(x_{00})
      \times \Phi(x_{11})} \xto{(\blank)_{(p,q)}} \mathcal{S},\]
  obtained by taking fibres at $(p,q)$, is representable.
\end{cor}

\begin{defn}
  Let $K$ be a collection of \icats{}. Following \cite{HA}*{Definition
    3.1.1.18} we say that an
  $\mathcal{O}$-monoidal \icat{} $\mathcal{V}^{\otimes}$ is
  \emph{compatible with $K$-colimits} if
  \begin{itemize}
  \item the \icat{} $\mathcal{V}_{X}$ has $K$-colimits for every
    object $X \in \mathcal{O}_{1}$,
  \item for every active morphism $f \colon X \to Y$ in $\mathcal{O}$
    with $X \in\mathcal{O}_{n}$ and $Y \in \mathcal{O}_{1}$, the
    functor
    \[ \prod_{i=1}^{n} \mathcal{V}_{X_{(i-1)i}} \simeq
      \mathcal{V}^{\otimes}_{X} \xto{f_{!}} \mathcal{V}_{Y},\] induced
    by the cocartesian morphisms over $f$, preserves $K$-colimits in
    each variable.
  \end{itemize}
\end{defn}

\begin{lemma}\label{lem:loccoc}
  Let $\pi \colon \mathcal{O} \to \Dop$ be a \gnsiopd{}.
  \begin{enumerate}[(i)]
  \item If for every active morphism $\phi \colon [1] \to [n]$ in
    $\simp$ and every $X \in \mathcal{O}_{n}$, there is a locally
    $\pi$-cocartesian morphism $X \to \phi_{!}X$ in $\mathcal{O}$,
    then $\pi$ is a locally cocartesian morphism.
  \item If in addition for every active map $\phi \colon [2] \to [n]$
    and $X \in \mathcal{O}_{n}$, the canonical map
    \[(\phi d_{1})_{!}X \to d_{1,!}\phi_{!}X\] is an equivalence, then
    $\pi$ is a cocartesian fibration.
  \end{enumerate}
\end{lemma}
\begin{proof}
  We first prove that $\mathcal{O}$ has locally cocartesian morphisms
  over any active map $\alpha \colon [n] \to [m]$ in $\simp$. Given $x
  \in \mathcal{O}_{m}$ and $y \in \mathcal{O}_{n}$, we have
  \[ \Map_{\mathcal{O}}^{\alpha}(x, y) \simeq \lim_{\rho_{ij} \in
      \simp^{\txt{el},\op}_{/[n]}}
    \Map_{\mathcal{O}}^{\alpha_{ij}}(x_{\alpha(i)\alpha(j)}, y_{ij})\]
  where $\alpha_{ij}$ is the active part of $\alpha \circ
  \rho_{ij}$. By assumption we have locally cocartesian morphisms
  $x_{\alpha(i)\alpha(j)} \to \alpha_{ij,!}x_{\alpha(i)\alpha(j)}$ (if
  $i = j$ this is just the identity), so we can rewrite this as
  \[ \lim_{\rho_{ij} \in \simp^{\txt{el},\op}_{/[n]}}
    \Map_{\mathcal{O}_{j-i}}(\alpha_{ij,!}x_{\alpha(i)\alpha(j)},
    y_{ij}) \simeq \Map_{\mathcal{O}_{n}}(\alpha_{!}x, y),\] where
  $\alpha_{!}x$ is the object of
  $\mathcal{O}_{n} \simeq \lim_{\rho_{ij} \in
    \simp^{\txt{el},\op}_{/[n]}} \mathcal{O}_{j-i}$ corresponding to
  the family of objects $\alpha_{ij,!}x_{\alpha(i)\alpha(j)}$. Thus we
  have a locally cocartesian morphism $x \to \alpha_{!}x$.

  Next, suppose $\phi \colon [n] \to [m]$ is an arbitrary map in $\simp$, and let
  $[n] \xto{\alpha} [k] \xto{\iota} [m]$ be its active-inert
  factorization. Then for $x \in \mathcal{O}_{m}$ the composite $x \to
  \iota_{!}x \to \alpha_{!}\iota_{!}x$ is locally cocartesian over
  $\phi$, where the first map is cocartesian over $\iota$ and the
  second is locally cocartesian over $\alpha$: for $y \in
  \mathcal{O}_{n}$ we have an
  equivalence
  \[ \Map_{\mathcal{O}}^{\phi}(x,y) \simeq
    \Map^{\alpha}_{\mathcal{O}}(\iota_{!}x, y) \simeq
    \Map_{\mathcal{O}_{n}}(\alpha_{!}\iota_{!}x, y),\]
  since $x \to \iota_{!}x$ is cocartesian. This shows that
  $\mathcal{O} \to \Dop$ is a locally cocartesian fibration.

  Before we prove part (ii), we make a further observation
  in the general case: Suppose
  $\alpha \colon [n] \to [m]$ is active, $\iota \colon [l] \to [n]$
  is inert, $x$ is an object of $\mathcal{O}_{m}$, $x \to \alpha_{!}x$
  is locally cocartesian, and $\alpha_{!}x \to \iota_{!}\alpha_{!}x$
  is cocartesian. Then it follows from the decomposition above of
  $\alpha_{!}$ in terms of locally cocartesian morphisms over the
  unique active maps $[1] \to
  [n]$ that
  $x \to \iota_{!}\alpha_{!}x$ is locally
  cocartesian over $\phi := \alpha \iota$.

  It remains to prove (ii), for which we have to check that the
  assumption implies that locally
  cocartesian morphisms over active maps compose, \ie{} for active
  morphisms
  \[ [m] \xto{\alpha} [n] \xto{\beta} [k] \]
  the natural map $(\beta\alpha)_{!}X \to \alpha_{!}\beta_{!}X$ is an
  equivalence for $X \in \mathcal{O}_{k}$. Using the
  decomposition of locally cocartesian morphisms above we can immediately reduce
  to the case where $m = 1$. Now if $\alpha$ is surjective, we must
  have $n = 0$ or $1$; if $n = 0$ then $\beta = \id_{[0]}$, while if
  $n = 1$ then $\alpha = \id_{[1]}$ --- in either case the claim is
  trivially true. We can therefore assume that $\alpha$ is not
  surjective, in which case we can find a factorization of $\alpha$ as
  \[ [1] \xto{d_{1}} [2] \xto{\alpha'} [n] \]
  where $\alpha'(1) \neq \alpha'(0),\alpha'(2)$;
  using this factorization we get for $X \in \mathcal{O}_{k}$ a
  commutative square
  \[
    \begin{tikzcd}
      (\beta\alpha' d_{1})_{!}X \arrow{r} \arrow{d} &
      d_{1,!}(\beta\alpha')_{!}X \arrow{d} \\
      (\alpha' d_{1})_{!}\beta_{!}X \arrow{r} & d_{1,!}\alpha'_{!}\beta_{!}X.
    \end{tikzcd}
  \]
  Here our assumption guarantees the horizontal maps are equivalences,
  and we want to show the left vertical map is an equivalence.  It
  thus suffices to show the right vertical map is an equivalence, for
  which it's enough to prove $(\beta\alpha')_{!}X \to
  \alpha'_{!}\beta_{!}X$ is an equivalence since $d_{1,!}$ is a
  functor. Our assumption on $\alpha'(1)$ means this decomposes as a
  pair of maps
  \[ [1] = \{0,1\} \to \{\alpha'(0),\ldots,\alpha'(1)\} \to
    \{\beta\alpha'(0),\ldots, \beta\alpha'(1)\}\]
  and similarly with $\{1,2\}$, where
  \[ \{\alpha'(0),\ldots, \alpha'(1)\},
    \{\alpha'(1),\ldots, \alpha'(2)\} < n. \]
  This means we can reduce to our assumption by inducting on $n$.
  Combined with our previous
  observations we have then shown that locally cocartesian morphisms
  compose in general, since it holds for all combinations of active
  and inert maps. Thus $\pi$ is a cocartesian fibration, as required.
\end{proof}

\subsection{$\infty$-Categories of Spans}
For compatibility with \cite{BarwickMackey} we will work with \icats{}
as \emph{quasicategories}, \ie{} simplicial sets satisfying the
horn-filling condition for inner horns, in this and the next subsections.

\begin{defn}
  Let $\epsilon \colon \simp \to \simp$ be the functor
  $[n] \mapsto [n] \star [n]^{\op}$. This induces a functor
  $\epsilon^{*} \colon \sSet \to \sSet$ given by composition with
  $\epsilon$; this functor is the \emph{edgewise subdivision} of
  simplicial sets. If $\mathcal{C}$ is an \icat{}, we will write
  $\Tw^{r} \mathcal{C} := \epsilon^{*}\mathcal{C}$ and refer to this as
  the \emph{twisted arrow \icat{}} of $\mathcal{C}$.
\end{defn}

\begin{remark}
  By \cite{HA}*{Proposition 5.2.1.3} the simplicial set
  $\Tw^{r} \mathcal{C}$ is an \icat{} if $\mathcal{C}$ is one, and the
  projection $\Tw^{r} \mathcal{C} \to \mathcal{C} \times
  \mathcal{C}^{\op}$ (induced by the inclusions $[n], [n]^{\op} \to [n]
  \star [n]^{\op}$) is a right fibration.
\end{remark}

\begin{remark}
  If $\mathbf{C}$ is an ordinary category, then it is easy to see that
  $\Tw^{r} \mathbf{C}$ can be identified with the \emph{twisted arrow
    category} of $\mathbf{C}$. This has morphisms $c \to d$ in
  $\mathbf{C}$ as objects, and diagrams
  \[
  \begin{tikzcd}
    c \arrow{r} \arrow{d} & d \\
    c' \arrow{r} & d' \arrow{u}
  \end{tikzcd}
  \]
  as morphisms from $c \to d$ to $c' \to d'$, with composition induced
  from composition in $\mathbf{C}$. Unwinding the definition of $\Tw^{r}
  \mathcal{C}$ for $\mathcal{C}$ an \icat{}, we see that its objects and morphisms
  admit the same description in terms of $\mathcal{C}$.
\end{remark}

\begin{ex}
  The twisted arrow category $\Tw^{r}(\Delta^{n})$ is the poset of pairs
  $(i,j)$ with $0 \leq i \leq j \leq n$ where $(i,j) \leq (i',j')$ if
  $i \leq i'$, $j' \leq j$.
\end{ex}

\begin{warning}
  There are two possible conventions for the definition of
  $\Tw^{r} \mathcal{C}$: Instead of the definition we have given we could
  instead consider $[n] \mapsto [n]^{\op} \star [n]$; let us call the
  resulting simplicial set $\Tw^{\ell} \mathcal{C}$ --- this is the
  definition of the twisted arrow \icat{} used in \cite{BarwickMackey}
  (there called $\widetilde{\mathcal{O}}(\mathcal{C})$). We clearly
  have $\Tw^{r} \mathcal{C} \cong (\Tw^{\ell} \mathcal{C})^{\op}$, which
  explains why op's appear in different places here compared to
  \cite{BarwickMackey}.
\end{warning}

\begin{defn}
  The functor $\epsilon^{*}$ has a right adjoint
  $\epsilon_{*} \colon \sSet \to \sSet$, given by right Kan
  extension. Explicitly, $\epsilon_{*}X$ is determined by
  $\Hom(\Delta^{n}, \epsilon_{*}X) \cong \Hom(\Tw^{r}(\Delta^{n}), X)$. If
  $\mathcal{C}$ is an \icat{}, we write $\OSpan(\mathcal{C})$ for the
  simplicial set $\epsilon_{*}\mathcal{C}$.
\end{defn}

\begin{defn}
  Let $\Tw^{r}(\Delta^{n})_{0}$ denote the full subcategory of
  $\Tw^{r}(\Delta^{n})$ spanned by the objects $(i,j)$ where $j-i \leq 1$.
  We say a simplex $\Delta^{n} \to \OSpan(\mathcal{C})$ is
  \emph{cartesian} if the corresponding functor
  $F \colon \Tw^{r}(\Delta^{n}) \to \mathcal{C}$ is the right Kan
  extension of its restriction to $\Tw^{r}(\Delta^{n})_{0}$, or
  equivalently if for all integers
  $0 \leq i \leq k \leq l \leq j \leq n$, the square
  \nolabelcsquare{F(i,j)}{F(k,j)}{F(i,l)}{F(k,l)} is cartesian. We
  write $\Span(\mathcal{C})$ for the simplicial subset of
  $\OSpan(\mathcal{C})$ containing only the cartesian simplices.
\end{defn}

\begin{remark}\label{rmk:spanprop}
  A morphism $\mathcal{I} \to \OSpan(\mathcal{C})$ corresponds to a
  functor $F \colon \Tw^{r}(\mathcal{I}) \to \mathcal{C}$. Unwinding the
  definitions, we see that the map to $\OSpan(\mathcal{C})$ takes $i
  \in \mathcal{I}$ to $F(i \xto{\id} i)$ and a morphism $f \colon i
  \to j$ to the value of $F$ at the span
  \[
    \begin{tikzcd}
      i  \arrow[equals]{d} \arrow[equals]{r} & i \arrow{r}{f}
      \arrow{d}{f} & j
      \arrow[equals]{d} \\
      i \arrow{r}{f} & j \arrow[equals]{r} & j.
    \end{tikzcd}
  \]
  A functor $\mathcal{I} \to \Span(\mathcal{C})$ then corresponds to a
  functor $\Tw^{r}(\mathcal{I}) \to \mathcal{C}$ such that for all
  composable morphisms $f \colon i \to j$, $g \colon j \to k$, the
  value of $F$ at the commutative square
  \[
    \begin{tikzcd}
      {} & i \arrow{dd} \arrow[equals]{dl} \arrow{drr} \\
      i \arrow[crossing over]{drr} \arrow{dd} & & & j \arrow{dd} \arrow[equals]{dl} \\
      & k \arrow[leftarrow]{dl} \arrow[equals]{drr}  & j  \\
      j \arrow[equals]{drr} & & & k \arrow[leftarrow]{dl} \\
       & & j \arrow[equals]{uu}
    \end{tikzcd}
    \]
  in $\Tw^{r}(\mathcal{I})$ is a cartesian square in $\mathcal{C}$.
\end{remark}

\begin{propn}[Barwick, \cite{BarwickMackey}*{Proposition 3.4}]
  If $\mathcal{C}$ is an \icat{} with pullbacks, then
  $\Span(\mathcal{C})$ is an \icat{}. \qed
\end{propn}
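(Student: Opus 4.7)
My plan is to verify the inner horn-filling condition directly. By definition, an $n$-simplex of $\Span(\mathcal{C})$ is a functor $F \colon \Tw(\Delta^{n}) \to \mathcal{C}$ that is the right Kan extension of its restriction to the full subcategory $\Tw(\Delta^{n})_{0}$ spanned by the objects $(i,j)$ with $j - i \leq 1$; equivalently, one satisfying the stated Cartesian square condition. Filling an inner horn $\Lambda^{n}_{k} \to \Span(\mathcal{C})$ with $0 < k < n$ then amounts to extending the associated functor $\Tw(\Lambda^{n}_{k}) \to \mathcal{C}$ to a Cartesian functor on $\Tw(\Delta^{n})$.

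The key observation is that for $0 < k < n$, every edge $[i, i+1]$ of $\Delta^{n}$ lies in some face $d_{j}\Delta^{n}$ with $j \neq k$---one can always find $j \in \{0, \ldots, n\} \setminus \{i, i+1, k\}$. Hence $\Tw(\Delta^{n})_{0} \subseteq \Tw(\Lambda^{n}_{k})$, so the horn data already determines $F$ on all of $\Tw(\Delta^{n})_{0}$.

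The next step is to construct the filler by right Kan extending along the inclusion $\Tw(\Delta^{n})_{0} \hookrightarrow \Tw(\Delta^{n})$. Concretely, for $j - i \geq 2$ this sets
\[ F(i, j) := F(i, i+1) \times_{F(i+1, i+1)} F(i+1, i+2) \times_{F(i+2, i+2)} \cdots \times_{F(j-1, j-1)} F(j-1, j),\]
an iterated pullback that exists because $\mathcal{C}$ admits pullbacks. The resulting functor is Cartesian by construction, so it defines a simplex of $\Span(\mathcal{C})$.

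The main obstacle is to verify that this filler agrees with the given horn data on each face $d_{j}\Delta^{n}$ with $j \neq k$, not merely on the edge subcategory. This reduces to a coherence: each such face is itself a Cartesian simplex, so its values on pairs $(p,q)$ with $q - p \geq 2$ are also forced to be the same iterated pullbacks of the same edge data; by the universal property of right Kan extensions the two restrictions are canonically equivalent. Promoting this equivalence to a strict simplicial-set-level filler requires the standard machinery of simplicial right Kan extensions into a quasicategory, together with the contractibility of the space of Cartesian extensions of fixed edge data. This final coherence step is exactly the content of Barwick's argument in \cite{BarwickMackey}*{Proposition 3.4}, and would be the step I would spend the most care writing out.
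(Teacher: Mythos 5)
The paper does not actually prove this statement---it is quoted from \cite{BarwickMackey} with only a reference---so the question is whether your argument stands on its own. Your outline is the standard one and is essentially Barwick's: identify an inner horn $\Lambda^n_k \to \Span(\mathcal{C})$ with a functor $\Tw(\Lambda^n_k) \to \mathcal{C}$, observe that for $0 < k < n$ the full subcategory $\Tw(\Delta^n)_0$ is already contained in $\Tw(\Lambda^n_k)$ (your count here is correct, including the boundary case $n=2$, $k=1$), and produce the filler by right Kan extension of the edge data; the iterated fibre product you write down is the correct pointwise limit over the zigzag $(i,i) \leftarrow (i,i+1) \to (i+1,i+1) \leftarrow \cdots \to (j,j)$, and it exists because $\mathcal{C}$ has pullbacks.

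Two steps are materially thinner than your sketch suggests. First, the claim that each face of the horn, being Cartesian, already forces its values to be ``the same iterated pullbacks of the same edge data'' is not immediate: the Cartesian condition on the face $d_m\Delta^n$ computes $F(i,j)$ (for $i < m < j$) as a limit over the zigzag of consecutive edges \emph{of that face}, which passes through the long edge $(m-1,m+1)$ rather than through $(m-1,m)$ and $(m,m+1)$; to refine this to the zigzag of genuine consecutive edges of $\Delta^n$ you must locate a second face of the horn containing $\{m-1,m,m+1\}$, use its Cartesian condition to decompose $F(m-1,m+1)$, and paste---an elementary but genuinely necessary induction. Second, upgrading ``the two candidate extensions agree up to canonical equivalence'' to an honest map of simplicial sets extending the horn is the entire theorem, and the tool is not quite the one you gesture at: $\Tw(\Lambda^n_k)$ is not a full subcategory of $\Tw(\Delta^n)$, and for $n \geq 3$ it is not even an $\infty$-category, so one needs the Kan-extension existence lemma in its form for arbitrary inclusions of simplicial sets (\cite{HTT}*{Lemma 4.3.2.13}, applied to $\Tw(\Lambda^n_k) \hookrightarrow \Tw(\Delta^n)$ after first verifying that $F$ is a pointwise right Kan extension of its restriction to $\Tw(\Delta^n)_0$). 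Since you explicitly defer exactly this step to Barwick, your proposal is an accurate roadmap rather than a complete proof: essentially all of the content of the proposition lives in the step you postpone.
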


\begin{defn}
  Following Barwick~\cite{BarwickMackey}, we say a \emph{triple} is a
  list $(\mathcal{C}, \mathcal{C}^{F}, \mathcal{C}^{B})$ where
  $\mathcal{C}$ is an \icat{} and $\mathcal{C}^{B}$ and
  $\mathcal{C}^{F}$ are both subcategories of $\mathcal{C}$ containing
  all the equivalences. We will call the morphisms in
  $\mathcal{C}^{B}$ the \emph{backwards} morphisms and the morphisms
  in $\mathcal{C}^{F}$ the \emph{forwards} morphisms in the triple. We
  say a triple is \emph{adequate} if for every morphism $f \colon x
  \to y$ in $\mathcal{C}^{F}$ and $g \colon z \to y$ in
  $\mathcal{C}^{B}$, there is a pullback square
  \csquare{w}{z}{x}{y}{f'}{g'}{g}{f}
  where $f'$ is in $\mathcal{C}^{F}$ and $g'$ is in $\mathcal{C}^{B}$.
\end{defn}

\begin{ex}
  If $\mathcal{C}$ is any \icat{}, we have the triple
  $(\mathcal{C}, \mathcal{C}, \mathcal{C})$ where all morphisms are
  both forwards and backwards morphisms. We call this the
  \emph{maximal} triple on $\mathcal{C}$; it is adequate \IFF{}
  $\mathcal{C}$ has pullbacks.
\end{ex}

\begin{remark}
  In \cite{BarwickMackey}, the forwards morphisms are called
  \emph{ingressive} and the backwards morphisms are called
  \emph{egressive}.
\end{remark}

\begin{defn}
  Given a triple $(\mathcal{C}, \mathcal{C}^{F}, \mathcal{C}^{B})$ we
  define $\OSpan_{B,F}(\mathcal{C})$ to be the simplicial subset
  of $\OSpan(\mathcal{C})$ containing only those simplices that
  correspond to maps $\sigma \colon \Tw^{r}(\Delta^{n}) \to
  \mathcal{C}$ such that for all $i,j$ the map $\sigma(i,j) \to \sigma(i+1,j)$
  lies in $\mathcal{C}^{F}$ and the map $\sigma(i,j) \to \sigma(i, j-1)$
  lies in $\mathcal{C}^{B}$. We write $\Span_{B,F}(\mathcal{C})$ for
  the simplicial subset of $\OSpan_{B,F}(\mathcal{C})$ containing
  the cartesian simplices with this property.
\end{defn}

\begin{propn}[Barwick, \cite{BarwickMackey}*{Proposition 5.6}]
  If $(\mathcal{C}, \mathcal{C}^{F}, \mathcal{C}^{B})$ is an adequate
  triple, then $\Span_{B,F}(\mathcal{C})$ is an \icat{}. \qed
\end{propn}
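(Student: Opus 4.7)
The plan is to verify the inner horn lifting property for $\Span_{B,F}(\mathcal{C})$ by following the strategy of Barwick's Proposition 3.4 while keeping track of forwards and backwards morphisms. Fix $0 < k < n$ and an inner horn $f \colon \Lambda^n_k \to \Span_{B,F}(\mathcal{C})$.

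First I would translate $f$ into combinatorial data: it corresponds to a compatible family of Cartesian functors $\sigma_l \colon \Tw(d_l\Delta^n) \to \mathcal{C}$ for $l \neq k$, each of which sends edges $(i,j) \to (i+1, j)$ to morphisms in $\mathcal{C}^F$ and edges $(i,j) \to (i, j-1)$ to morphisms in $\mathcal{C}^B$. These glue along intersections to a single functor $\sigma_{0}$ defined on the subposet $T \subseteq \Tw(\Delta^n)$ of pairs $(i,j)$ such that $\{i,j,k\} \neq \{0,\ldots,n\}$. For $n=2$ only the pair $(0,2)$ is missing; for $n \geq 3$ every vertex of $\Tw(\Delta^n)$ lies in $T$, though certain subsquares are not contained in any one face.

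Next I would extend $\sigma_0$ to a functor $\sigma \colon \Tw(\Delta^n) \to \mathcal{C}$ by induction on $j-i$. For $(i,j) \notin T$ I set
\[
\sigma(i,j) \;:=\; \sigma(i, j-1) \times_{\sigma(i+1, j-1)} \sigma(i+1, j).
\]
By the inductive hypothesis the map $\sigma(i,j-1) \to \sigma(i+1,j-1)$ is forwards and $\sigma(i+1,j) \to \sigma(i+1,j-1)$ is backwards, so the adequacy condition on $(\mathcal{C}, \mathcal{C}^F, \mathcal{C}^B)$ supplies the pullback together with projections $\sigma(i,j) \to \sigma(i+1,j)$ in $\mathcal{C}^F$ and $\sigma(i,j) \to \sigma(i,j-1)$ in $\mathcal{C}^B$, keeping the forward/backward structure intact for the next step.

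To conclude, I would check that the assembled $\sigma$ is a Cartesian simplex, i.e.\ that every subsquare indexed by $i \leq k' \leq l \leq j$ is a pullback: those contained in some $d_{l'}\Delta^n$ with $l' \neq k$ are Cartesian by hypothesis, and the remaining squares (where $\{i, k', l, j, k\}$ covers $\{0,\ldots,n\}$) are Cartesian by a pasting argument that decomposes each such square into smaller squares already known to be pullbacks. The main obstacle is exactly this last verification — both checking the Cartesian property for squares lying in no face of the horn and confirming that the inductively constructed values for $(i,j) \notin T$ are compatible with the $\sigma_0$-values at already-defined pairs; the adequacy hypothesis is tailored precisely so that each inductive step produces a pullback whose edges retain the required forward/backward labels, making the pasting argument go through.
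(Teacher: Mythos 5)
Note first that the paper gives no proof of this statement: it is quoted from Barwick with a citation and a \qed{} in the statement itself, so the only thing to compare your sketch against is Barwick's own argument (\cite{BarwickMackey}*{Proposition 5.6 and \S 12}), on which your sketch is clearly modelled. Your overall strategy --- fill an inner horn by producing the missing data as pullbacks, with adequacy guaranteeing both the existence of each pullback and the correct forwards/backwards labels on its two projections --- is the right one, and your bookkeeping of the labels is correct: adequacy applied to $\sigma(i,j-1) \to \sigma(i+1,j-1)$ in $\mathcal{C}^{F}$ and $\sigma(i+1,j) \to \sigma(i+1,j-1)$ in $\mathcal{C}^{B}$ produces exactly the projections you need, so the induction propagates the forwards/backwards structure as you claim.

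The gap is that your construction only produces \emph{objects} at missing vertices, while the actual extension problem is about simplices. The horn datum is a map of simplicial sets $\Tw(\Lambda^{n}_{k}) \to \mathcal{C}$, where $\Tw(\Lambda^{n}_{k}) = \bigcup_{l \neq k} \Tw(d_{l}\Delta^{n})$ is a proper simplicial subset of $\Tw(\Delta^{n})$ which, for $n \geq 3$, already contains \emph{every} vertex (as you observe) but is still missing edges and higher simplices: for $n = 3$, $k = 1$, the edge $(0,3) \to (1,2)$ uses all four indices and so lies in no face $d_{l}\Delta^{3}$. For $n \geq 3$ your induction on $j - i$ is therefore vacuous, and the entire content of the proof --- producing the missing edges, the $2$-simplices witnessing commutativity, and all higher coherences --- is hidden in the phrases ``these glue to a single functor'' and ``pasting argument''; the latter, as stated, only verifies Cartesianness of squares that have already been filled, not the existence of the fillers. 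Because $\mathcal{C}$ is an $\infty$-category rather than an ordinary category, a map out of $\Tw(\Delta^{n})$ is not determined by values on vertices and edges plus commutativity checks; one must solve a genuine lifting problem. This is exactly what Barwick's filtration in \cite{BarwickMackey}*{\S 12} supplies (the same combinatorics this paper invokes as Proposition~\ref{propn:Twmarkedanod}): the inclusion $\Tw(\Lambda^{n}_{k}) \hookrightarrow \Tw(\Delta^{n})$ is decomposed into pushouts of inner horns, filled because $\mathcal{C}$ is an $\infty$-category, and of cells of right-Kan-extension type, whose fillers are the limit cones over precisely the pullbacks you describe. Your sketch becomes a proof once the vertexwise induction is replaced by that filtration, with your pullback formula identifying the value of the extension at each new cell.
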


\subsection{Spans and Fibrations}

\begin{defn}\label{defn:cartfibtriple}
  Given an adequate triple $(\mathcal{B}, \mathcal{B}^{F}, \mathcal{B}^{B})$ and
  an inner fibration $p \colon \mathcal{E} \to \mathcal{B}$ such that
  $\mathcal{E}$ has $p$-cartesian morphisms over morphisms in
  $\mathcal{B}^{B}$, we define a triple
  $(\mathcal{E}, \mathcal{E}^{F}, \mathcal{E}^{B})$ by taking
  $\mathcal{E}^{B}$ to consist of cartesian morphisms over morphisms
  in $\mathcal{B}^{B}$ and $\mathcal{E}^{F}$ to consist of all
  morphisms lying over morphisms in $\mathcal{B}^{F}$.
\end{defn}

\begin{propn}\label{propn:cartfibtriplead}
  In the situation of Definition~\ref{defn:cartfibtriple}, the triple
  $(\mathcal{E}, \mathcal{E}^{F}, \mathcal{E}^{B})$ is
  adequate. Moreover, we have a pullback square of simplicial sets
  \nolabelcsquare{\Span_{B,F}(\mathcal{E})}{\OSpan_{B,F}(\mathcal{E})}{\Span_{B,F}(\mathcal{B})}{\OSpan_{B,F}(\mathcal{B}).}
\end{propn}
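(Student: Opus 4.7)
The plan is to prove the adequacy of $(\mathcal{E}, \mathcal{E}^{F}, \mathcal{E}^{B})$ and the claimed pullback square of simplicial sets in parallel, with both resting on a single squarewise comparison lemma relating pullbacks in $\mathcal{E}$ to pullbacks in $\mathcal{B}$.

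For adequacy, given $f \colon x \to y$ in $\mathcal{E}^{F}$ and $g \colon z \to y$ in $\mathcal{E}^{B}$, I would write $\bar{f}, \bar{g}$ for their images under $p$, so that $\bar{f} \in \mathcal{B}^{F}$, $\bar{g} \in \mathcal{B}^{B}$, and $g$ is a $p$-Cartesian lift of $\bar{g}$. Adequacy of the base triple produces a pullback square in $\mathcal{B}$ with vertex $\bar{w}$, back edge $\bar{g}' \in \mathcal{B}^{B}$, and forward edge $\bar{f}' \in \mathcal{B}^{F}$. I would then lift this square to $\mathcal{E}$ by first choosing a $p$-Cartesian lift $g' \colon w \to x$ of $\bar{g}'$ (automatically in $\mathcal{E}^{B}$), and then applying the universal property of the $p$-Cartesian morphism $g$ to $fg'$, which lies over $\bar{g}\bar{f}'$: this yields an essentially unique $f' \colon w \to z$ over $\bar{f}'$ with $gf' \simeq fg'$, which is automatically in $\mathcal{E}^{F}$.

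The claim that this resulting square is actually a pullback in $\mathcal{E}$, together with the pullback-of-simplicial-sets assertion, both reduce to the following squarewise lemma: \emph{a commutative square in $\mathcal{E}$ whose vertical maps are $p$-Cartesian is a pullback if and only if its image under $p$ is a pullback in $\mathcal{B}$}. Indeed, the pullback-square claim unpacks to: a simplex $F \colon \Tw(\Delta^{n}) \to \mathcal{E}$ of $\OSpan_{B,F}(\mathcal{E})$ is Cartesian if and only if $pF$ is Cartesian. Since Cartesianness is the local condition that every subsquare $F(i,j) \to F(k,j) \to F(i,l) \to F(k,l)$ is a pullback, and the vertical edges of these subsquares are composites of $p$-Cartesian lifts of maps in $\mathcal{B}^{B}$ (hence $p$-Cartesian), the lemma applies subsquare by subsquare.

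The ``if'' direction of the lemma is the standard fact about pullbacks in a Cartesian fibration: a test cone into the top-right and bottom-left corners yields a comparison in $\mathcal{B}$ from the image pullback, which lifts uniquely along a $p$-Cartesian vertical edge, with compatibility to the opposite corner forced by Cartesianness of the other vertical edge. For the ``only if'' direction, I would form the pullback in $\mathcal{B}$ using adequacy, lift it back to $\mathcal{E}$ exactly as in the adequacy argument to produce a pullback in $\mathcal{E}$ lying over a pullback in $\mathcal{B}$, and compare with the given pullback: the resulting equivalence in $\mathcal{E}$ descends via $p$-Cartesianness to the required equivalence in $\mathcal{B}$. The main obstacle is ensuring that the essentially unique maps produced by the various universal properties line up to give exactly the expected comparison map, which is a standard but somewhat fiddly bookkeeping with $p$-Cartesian lifts; once this is in place, both parts of the proposition fall out of the lemma.
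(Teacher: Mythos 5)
Your proposal is correct and takes essentially the same route as the paper: the paper's Lemma~\ref{lem:cartpb} is precisely your squarewise comparison lemma (stated there only in the ``if'' direction, proved via a cube of mapping spaces), and the paper likewise deduces both adequacy and the subsquare-by-subsquare equivalence of Cartesianness from it. If anything, you are more explicit than the paper about the ``only if'' direction needed for the pullback square, which the paper leaves as a terse consequence of the same lemma.
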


This is a consequence of the following simple observation:
\begin{lemma}\label{lem:cartpb}
  Let $p \colon \mathcal{E} \to \mathcal{B}$ be an inner
  fibration, and suppose we have a pullback square
  \csquare{a'}{b'}{a}{b}{f'}{\alpha}{\beta}{f}
  in $\mathcal{B}$. If $\bar{b}' \to \bar{b}$ is a morphism in
  $\mathcal{E}$ over $\beta$ and there exist $p$-cartesian morphisms
  $\bar{a} \to \bar{b}$ over $f$ and $\bar{a}' \to \bar{b}'$ over
  $f'$, then the commutative square
  \nolabelcsquare{\bar{a}'}{\bar{b}'}{\bar{a}}{\bar{b}}
  (where the left vertical morphism is induced by the universal
  property of $\bar{a} \to \bar{b}$) is cartesian.
\end{lemma}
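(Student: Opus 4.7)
The plan is to verify the pullback property by reducing it to a mapping-space computation, using the standard characterization of $p$-Cartesian morphisms in terms of mapping spaces.

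Concretely, for any object $\bar{x} \in \mathcal{E}$, the $p$-Cartesian morphism $\bar{a} \to \bar{b}$ induces an equivalence
\[ \Map_{\mathcal{E}}(\bar{x}, \bar{a}) \simeq \Map_{\mathcal{E}}(\bar{x}, \bar{b}) \times_{\Map_{\mathcal{B}}(p\bar{x}, b)} \Map_{\mathcal{B}}(p\bar{x}, a),\]
and analogously $\Map_{\mathcal{E}}(\bar{x}, \bar{a}') \simeq \Map_{\mathcal{E}}(\bar{x}, \bar{b}') \times_{\Map_{\mathcal{B}}(p\bar{x}, b')} \Map_{\mathcal{B}}(p\bar{x}, a')$. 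Since the given square is a pullback in $\mathcal{B}$, we also have
\[ \Map_{\mathcal{B}}(p\bar{x}, a') \simeq \Map_{\mathcal{B}}(p\bar{x}, a) \times_{\Map_{\mathcal{B}}(p\bar{x}, b)} \Map_{\mathcal{B}}(p\bar{x}, b').\]
Substituting and rearranging the iterated pullback, the mapping space $\Map_{\mathcal{E}}(\bar{x}, \bar{a}')$ identifies with $\Map_{\mathcal{E}}(\bar{x}, \bar{a}) \times_{\Map_{\mathcal{E}}(\bar{x}, \bar{b})} \Map_{\mathcal{E}}(\bar{x}, \bar{b}')$, which is exactly the statement that the square in $\mathcal{E}$ is a pullback. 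The key bookkeeping step is that this rearrangement requires the comparison map $\bar{a}' \to \bar{a}$ appearing in the square to be the one induced by the universal property of $\bar{a} \to \bar{b}$ applied to the composite $\bar{a}' \to \bar{b}' \to \bar{b}$ — precisely the morphism specified in the statement — so that the two pullback diagrams over $\Map_{\mathcal{E}}(\bar{x}, \bar{b})$ match up coherently.

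The only real obstacle is ensuring that all the projections and structure maps between these mapping spaces commute as expected; this is essentially a standard diagram chase, and can alternatively be phrased in terms of slice $\infty$-categories by noting that a $p$-Cartesian morphism $\bar{a} \to \bar{b}$ induces an equivalence $\mathcal{E}_{/\bar{a}} \simeq \mathcal{E}_{/\bar{b}} \times_{\mathcal{B}_{/b}} \mathcal{B}_{/a}$, after which the claim follows by iterating the pasting lemma for pullbacks in the base and in the slice over $\bar{b}$. No results beyond the basic theory of Cartesian edges (e.g.\ \cite{HTT}*{Proposition 2.4.4.3}) are needed.
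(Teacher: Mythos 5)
Your proposal is correct and is essentially the paper's own argument: the paper assembles the same three Cartesian squares (the two from the $p$-Cartesian morphisms $\bar a\to\bar b$ and $\bar a'\to\bar b'$, and the one from the pullback in $\mathcal{B}$) into a commutative cube of mapping spaces and deduces that the top face is Cartesian, which is exactly your chain of iterated-pullback rearrangements. Your remark about the comparison map $\bar a'\to\bar a$ being the one induced by the universal property is the same point the paper handles implicitly in setting up the cube.
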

\begin{proof}
  For any $\bar{x}$ in $\mathcal{E}$ over $x \in \mathcal{B}$ we have
  a commutative cube
  \[
  \begin{tikzcd}
    {} & \Map_{\mathcal{E}}(\bar{x}, \bar{a}') \arrow{rr} \arrow{dd} \arrow{dl} & &
    \Map_{\mathcal{E}}(\bar{x}, \bar{b}') \arrow{dd} \arrow{dl}\\
    \Map_{\mathcal{E}}(\bar{x}, \bar{a}) \arrow{dd} \arrow[crossing over]{rr} & &
    \Map_{\mathcal{E}}(\bar{x}, \bar{b}) \\
    {} & \Map_{\mathcal{B}}(x, a') \arrow{rr}\arrow{dl} & &
    \Map_{\mathcal{B}}(x, b') \arrow{dl} \\
    \Map_{\mathcal{B}}(x, a) \arrow{rr} & & \Map_{\mathcal{B}}(x, b)
    \arrow[crossing over,leftarrow]{uu}
  \end{tikzcd}
\]
  in the \icat{} of spaces.
  Here the bottom face is cartesian since $a'$ is a pullback, and the
  front and back faces are cartesian since the morphisms $\bar{a} \to
  \bar{b}$ and $\bar{a}' \to \bar{b}'$ are $p$-cartesian. Therefore
  the top face is also cartesian. Since this holds for all $\bar{x}
  \in \mathcal{E}$ this means $\bar{a}'$ is the pullback $\bar{a}
  \times_{\bar{b}} \bar{b}'$, as required.
\end{proof}
\begin{proof}[Proof of Proposition~\ref{propn:cartfibtriplead}]
  Adequacy follows immediately from Lemma~\ref{lem:cartpb}. Moreover,
  this lemma also shows that an $n$-simplex of
  $\OSpan_{B,F}(\mathcal{E})$ lies in $\Span_{B,F}(\mathcal{E})$
  \IFF{} it maps to an $n$-simplex of $\Span_{B,F}(\mathcal{B})$,
  giving the pullback square.
\end{proof}

\begin{defn}
  For $K$ a simplicial set, let $\Tw^{r}_{B}(K)$ denote the marked
  simplicial set $(\Tw^{r}(K), B)$ where $B$ is the set of ``backwards''
  maps, \ie{} those lying in the image of $K^{\op} \to \Tw^{r}(K)$.
\end{defn}

In the remaining part of this subsection we give a reformulation of
the results of \cite{BarwickMackey}*{\S 12} that will be convenient
for us.

\begin{propn}\label{propn:Twmarkedanod}
  For $0 < k < n$, the map $\Tw^{r}_{B}(\Lambda^{n}_{k})^{\op} \to \Tw^{r}_{B}(\Delta^{n})^{\op}$ is marked anodyne in
  the sense of \cite{HTT}*{Definition 3.1.1.1}.
\end{propn}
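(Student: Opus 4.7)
The plan is to exhibit the inclusion $\Tw_{B}(\Lambda^{n}_{k})^{\op} \hookrightarrow \Tw_{B}(\Delta^{n})^{\op}$ as an iterated pushout of generating marked anodyne morphisms (in the sense of \cite{HTT}*{Definition 3.1.1.1}), following the strategy of Barwick's proof in \cite{BarwickMackey}*{\S 12}. The Warning above records that $\Tw(\mathcal{C}) \cong \widetilde{\mathcal{O}}(\mathcal{C})^{\op}$; under this identification the backwards edges of $\Tw_{B}(K)$, after taking $\op$, correspond precisely to the ingressive edges in Barwick's setup, so the statement is a direct translation of his.

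First I would identify the simplices of $\Tw(\Delta^{n})$ that lie outside $\Tw(\Lambda^{n}_{k})$. Since $\Tw(\Delta^{n})$ is the nerve of the poset of pairs $(i,j)$ with $0 \leq i \leq j \leq n$, an $m$-simplex is a chain of such pairs, and it lies in $\Tw(\Lambda^{n}_{k})$ iff the combined set of coordinates $\{i_{\bullet}\} \cup \{j_{\bullet}\}$ appearing in the chain misses some vertex of $[n]$ other than $k$. Thus the simplices to be attached are precisely those whose coordinate set contains $[n]\setminus\{k\}$.

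Next I would filter these missing simplices by a complexity measure — by dimension $m$, and within each dimension by a secondary invariant measuring how the chain meets the ``critical'' pairs $(i,j)$ with $i \leq k \leq j$. At each stage one attaches the next family of simplices as a pushout of either an inner horn $(\Lambda^{m}_{\ell})^{\flat} \hookrightarrow (\Delta^{m})^{\flat}$ with $0 < \ell < m$, or an outer horn inclusion $(\Lambda^{m}_{0}, \mathcal{E}_{0}) \hookrightarrow (\Delta^{m}, \mathcal{E}_{0})$ of type \cite{HTT}*{Definition 3.1.1.1(2)}. For the latter pushouts to apply, the distinguished initial edge $\Delta^{\{0,1\}}$ of the attached simplex must coincide with an edge of $\Tw_{B}(\Delta^{n})^{\op}$ that is already marked — that is, with a backwards edge in the image of $(\Lambda^{n}_{k})^{\op} \hookrightarrow \Tw_{B}(\Lambda^{n}_{k})^{\op}$.

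The main obstacle is exactly this bookkeeping: arranging the filtration so that every outer-horn pushout matches its distinguished initial edge with an ingressive/backwards edge, and so that no simplex is attached before its inner-horn boundary is already present. This is the technical heart of Barwick's argument in \cite{BarwickMackey}*{\S 12}, and after the convention translation recorded in the Warning the filtration there carries over essentially verbatim to produce the required sequence of pushouts, yielding the result.
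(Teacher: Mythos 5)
Your proposal is correct and follows essentially the same route as the paper: the paper's proof simply invokes the filtration of \cite{BarwickMackey}*{\S 12} together with \cite{BarwickMackey}*{Proposition 12.14}, after the convention translation $\Tw\mathcal{C} \cong (\Tw'\mathcal{C})^{\op}$ recorded in the Warning. Your identification of the missing simplices and of the marked outer-horn bookkeeping is exactly the content of that filtration, so there is nothing further to add.
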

\begin{proof}
  This follows from the filtration defined in \cite{BarwickMackey}*{\S
    12}, using \cite{BarwickMackey}*{Proposition 12.14}.
\end{proof}

\begin{cor}\label{cor:Span inner fib}
  If $\mathcal{E} \to \mathcal{B}$ is as in Definition~\ref{defn:cartfibtriple}, then
  \begin{enumerate}[(i)]
  \item   $\OSpan_{B,F}(\mathcal{E}) \to \OSpan_{B,F}(\mathcal{B})$ is
    an inner fibration.
  \item $\Span_{B,F}(\mathcal{E}) \to \Span_{B,F}(\mathcal{B})$ is an
    inner fibration.
  \end{enumerate}
\end{cor}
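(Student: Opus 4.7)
The plan is to deduce (i) directly from Proposition~\ref{propn:Twmarkedanod}, and to derive (ii) from (i) via the pullback square of Proposition~\ref{propn:cartfibtriplead}.

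For (i), I would first convert a lifting problem against $\Lambda^n_k \hookrightarrow \Delta^n$ (with $0 < k < n$) into a lifting problem over $p \colon \mathcal{E} \to \mathcal{B}$ via the adjunction $\epsilon^{*} \dashv \epsilon_{*}$. Such a problem unfolds to a square
\[
\begin{tikzcd}
\Tw(\Lambda^n_k) \arrow{r} \arrow{d} & \mathcal{E} \arrow{d}{p} \\
\Tw(\Delta^n) \arrow{r} & \mathcal{B}
\end{tikzcd}
\]
in which forward edges of $\Tw$ land in $\mathcal{B}^F$, $\mathcal{E}^F$ and backward edges land in $\mathcal{B}^B$, $\mathcal{E}^B$. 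Because $\mathcal{E}^F$ is defined as the full preimage of $\mathcal{B}^F$, the forward constraint is automatically satisfied by any candidate lift; only the backward constraint is substantive, requiring backward edges to lift to $p$-Cartesian morphisms over $\mathcal{B}^B$.

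Next I would encode this as a marked lifting problem. Marking $\mathcal{B}$ by $\mathcal{B}^B$ and $\mathcal{E}$ by $\mathcal{E}^B$, and applying $(-)^{\op}$ to align with the convention of Proposition~\ref{propn:Twmarkedanod}, the problem becomes the outer square
\[
\begin{tikzcd}
\Tw_B(\Lambda^n_k)^{\op} \arrow{r} \arrow{d} & (\mathcal{E}^{\op}, \mathcal{E}^{B,\op}) \arrow{d}{p^{\op}} \\
\Tw_B(\Delta^n)^{\op} \arrow{r} & (\mathcal{B}^{\op}, \mathcal{B}^{B,\op})
\end{tikzcd}
\]
of marked simplicial sets. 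Since $p^{\op}$ is an inner fibration admitting coCartesian lifts over the marked edges of $\mathcal{B}^{\op}$, a standard argument in the spirit of \cite{HTT}*{\S 3.1} shows $p^{\op}$ has the RLP against marked anodyne maps of this form; Proposition~\ref{propn:Twmarkedanod} then supplies the required lift.

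For (ii), the pullback square of Proposition~\ref{propn:cartfibtriplead} realizes $\Span_{B,F}(\mathcal{E})$ as $\OSpan_{B,F}(\mathcal{E}) \times_{\OSpan_{B,F}(\mathcal{B})} \Span_{B,F}(\mathcal{B})$, so the claim follows from (i) by stability of inner fibrations under pullback. I expect the main obstacle to be the careful bookkeeping of op conventions in the marked-lifting step --- specifically, verifying that the relative coCartesian property of $p^{\op}$ over the marked subcategory $\mathcal{B}^{B,\op}$ really is strong enough to lift against \emph{all} marked anodyne maps of the form produced by Proposition~\ref{propn:Twmarkedanod}, rather than only some restricted subclass.
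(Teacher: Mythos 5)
Your proposal is correct and follows essentially the same route as the paper: translate the inner-horn lifting problem through the adjunction $\epsilon^{*} \dashv \epsilon_{*}$ into a lift of $\Tw(\Lambda^n_k) \to \Tw(\Delta^n)$ against $p$, solve it using the marked anodyne statement of Proposition~\ref{propn:Twmarkedanod} together with the fact that backwards edges must go to $p$-Cartesian morphisms (which become marked/coCartesian edges for $p^{\op}$), and deduce (ii) from the pullback square of Proposition~\ref{propn:cartfibtriplead}. The only difference is that you spell out the op-conventions and the marked-lifting bookkeeping that the paper leaves implicit.
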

\begin{proof}
  To prove (i) we must show that there exists a lift in every
  commutative square
  \liftcsquare{\Lambda^n_k}{\OSpan_{B,F}(\mathcal{E})}{\Delta^n}{\OSpan_{B,F}(\mathcal{B})}{}{}{}{}{}
  with $0 < k < n$. This is equivalent to giving a lift in the corresponding commutative square
  \liftcsquare{\Tw^{r} \Lambda^n_k}{\mathcal{E}}{\Tw^{r}
    \Delta^n}{\mathcal{B}.}{}{}{}{}{}
  Here the lift exists by Proposition~\ref{propn:Twmarkedanod}, since
  by definition the backwards maps go to cartesian morphisms in
  $\mathcal{E}$. Now (ii) follows from the pullback square in
  Proposition~\ref{propn:cartfibtriplead}.  
\end{proof}

\begin{propn}\label{propn:SpanLocCoCart}
  Let $p \colon \mathcal{E} \to \mathcal{B}$ be as in
  Definition~\ref{defn:cartfibtriple}, and assume that in
  addition $\mathcal{E}$ has locally $p$-cocartesian edges over morphisms in
  $\mathcal{B}^{F}$. Then:
  \begin{enumerate}[(i)]
  \item $\OSpan_{B,F}(\mathcal{E}) \to \OSpan_{B,F}(\mathcal{B})$ is a
    locally cocartesian fibration,
  \item $\Span_{B,F}(\mathcal{E}) \to \Span_{B,F}(\mathcal{B})$ is
    a locally cocartesian fibration,
  \end{enumerate}
  A span $X \xfrom{f} Y \xto{g} Z$ in $\mathcal{E}$ is locally
  $p$-cocartesian \IFF{} $g$ is a locally $p$-cocartesian morphism in
  $\mathcal{E}$.
\end{propn}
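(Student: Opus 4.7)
The plan is to prove (i) first and then deduce (ii) and the characterization by base change. Proposition~\ref{propn:cartfibtriplead} exhibits $\Span_{B,F}(\mathcal{E})$ as the pullback of $\OSpan_{B,F}(\mathcal{E}) \to \OSpan_{B,F}(\mathcal{B})$ along $\Span_{B,F}(\mathcal{B}) \hookrightarrow \OSpan_{B,F}(\mathcal{B})$. Since being a locally coCartesian fibration and the class of locally coCartesian edges are both stable under base change, (ii) together with its half of the characterization follows directly from the corresponding statements for (i).

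For (i), Corollary~\ref{cor:Span inner fib}(i) already provides the inner fibration property, so only the existence of locally coCartesian lifts (and their identification) remains. Given an edge $\sigma = (b_0 \xleftarrow{\beta} b_{01} \xrightarrow{\alpha} b_1)$ in $\OSpan_{B,F}(\mathcal{B})$ and a lift $\bar b_0 \in \mathcal{E}$, I would first choose a $p$-Cartesian morphism $\bar\beta \colon \bar b_{01} \to \bar b_0$ lifting $\beta$ (available by Definition~\ref{defn:cartfibtriple} since $\beta \in \mathcal{B}^{B}$) and then a locally $p$-coCartesian morphism $\bar\alpha \colon \bar b_{01} \to \bar b_1$ lifting $\alpha$ (available by the additional hypothesis since $\alpha \in \mathcal{B}^{F}$). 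The resulting span $\bar\sigma = (\bar b_0 \xleftarrow{\bar\beta} \bar b_{01} \xrightarrow{\bar\alpha} \bar b_1)$ is by construction a $1$-simplex of $\OSpan_{B,F}(\mathcal{E})$ lifting $\sigma$ with the prescribed source, and it is the proposed locally coCartesian lift.

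The main work is to verify that $\bar\sigma$ is indeed locally $q$-coCartesian, where $q = \OSpan_{B,F}(p)$. Unwinding this via pullback of $q$ along $\sigma$, it reduces to a horn-extension problem $\Lambda^n_0 \hookrightarrow \Delta^n$ for $n \geq 2$ whose base image is the degenerate $n$-simplex on $\sigma$. Via the edgewise subdivision adjunction, this in turn becomes the problem of extending a diagram $\Tw(\Lambda^n_0) \to \mathcal{E}$ to $\Tw(\Delta^n) \to \mathcal{E}$ compatibly with a prescribed extension in $\mathcal{B}$. The main obstacle, and the technical core of the argument, is organizing this extension: the new vertices of $\Tw(\Delta^n)$ must be built using $p$-Cartesian lifts propagated along backwards arrows from $\bar\beta$ and the locally $p$-coCartesian lift $\bar\alpha$ along the leading forwards arrow, and these choices must cohere into a genuine functor on $\Tw(\Delta^n)$. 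I expect this to reduce to repeated application of Lemma~\ref{lem:cartpb}, which guarantees that $p$-Cartesian pullbacks fit together correctly in $\mathcal{E}$.

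Finally, the characterization in the last sentence follows from this construction. By design the forwards leg $\bar\alpha$ of $\bar\sigma$ is locally $p$-coCartesian, yielding one direction. Conversely, given any span $(\bar b_0 \xleftarrow{f} Y \xrightarrow{g} \bar b_1)$ in $\OSpan_{B,F}(\mathcal{E})$ lifting $\sigma$, the backwards leg $f$ is automatically $p$-Cartesian by the triple structure of Definition~\ref{defn:cartfibtriple}, so it is equivalent to $\bar\beta$ in the fiber over $b_{01}$. If $g$ is locally $p$-coCartesian, transporting along this equivalence identifies the span with $\bar\sigma$ up to equivalence in the fiber of $q$ over $\sigma$, hence it is itself locally $q$-coCartesian; and conversely, by essential uniqueness of locally coCartesian lifts up to fiber-equivalence, any locally $q$-coCartesian lift of $\sigma$ must have this form, so its forwards leg must be locally $p$-coCartesian.
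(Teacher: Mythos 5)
Your construction of the candidate lift and your reduction of (ii) to (i) match the paper exactly: given a span $b \xfrom{f} b' \xto{g} b''$ and a lift $e$ of $b$, take a $p$-Cartesian lift of the backwards leg followed by a locally $p$-coCartesian lift of the forwards leg, and deduce (ii) from the pullback square of Proposition~\ref{propn:cartfibtriplead}. The problem is the step you yourself identify as ``the main work'': you reduce the verification that this span is locally coCartesian to an outer-horn extension problem $\Tw(\Lambda^n_0) \to \Tw(\Delta^n)$ relative to $\mathcal{B}$, and then only assert that you ``expect'' this to follow from repeated application of Lemma~\ref{lem:cartpb}. That is a genuine gap. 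Lemma~\ref{lem:cartpb} only produces Cartesian squares from $p$-Cartesian lifts; it says nothing about the universal property of the locally $p$-coCartesian edge $\bar\alpha$, which must enter the argument somewhere, since the claim is false without it. Moreover the only horn-filling input available in the paper, Proposition~\ref{propn:Twmarkedanod}, covers inner horns $0<k<n$ only, so the outer-horn case would require a separate and more delicate combinatorial filtration in the style of \cite{BarwickMackey}. As written, the technical core of your proof is missing.

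The paper's proof sidesteps all of this with a mapping-space argument: because the backwards leg of \emph{any} $1$-simplex of $\OSpan_{B,F}(\mathcal{E})$ over $\sigma$ with source $e$ is required to be $p$-Cartesian, hence essentially unique, the space of maps from $e$ to $e''$ in the pullback $\sigma^{*}\OSpan_{B,F}(\mathcal{E}) \to \Delta^{1}$ is identified with $\Map_{\mathcal{E}}(e',e'')_{g}$. Under this identification the coCartesian condition for the span over $\Delta^{1}$ is literally the locally coCartesian condition for $\bar{g}$, which gives both the existence of lifts and the characterization in the last sentence with no simplicial combinatorics. Your final paragraph on the characterization is fine in spirit, but it too is only a consequence of (i) once established; I would recommend replacing your horn-filling sketch with the mapping-space identification.
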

\begin{proof}
  We first prove (i). Consider a 1-simplex $\phi$ of
  $\OSpan_{B,F}(\mathcal{B})$, which corresponds to a span
  $b \xfrom{f} b' \xto{g} b''$ in $\mathcal{B}$. We wish to show that
  the pullback $\phi^{*}\OSpan_{B,F}(\mathcal{E}) \to \Delta^{1}$ is a
  cocartesian fibration. Pick an object $e$ of $\mathcal{E}$ lying
  over $b$. Then a 1-simplex of $\OSpan_{B,F}(\mathcal{E})$ with
  source $e$ lying over $\phi$ is a span
  $e \xfrom{\bar{f}} e' \xto{\bar{g}} e''$ where $\bar{f}$ is a
  cartesian morphism over $f$ and $\bar{g}$ is any morphism over $g$. The space
  of maps from $e$ to $e''$ in $\phi^{*}\OSpan_{B,F}(\mathcal{E})$ can
  therefore be identified with the space
  $\Map_{\mathcal{E}}(e', e'')_{g}$ of maps in $\mathcal{E}$ lying
  over $g$. From this it follows immediately that if $\bar{g} \colon
  e' \to e''$ is a locally cocartesian morphism from $e'$ over $g$
  then the span $e \xfrom{\bar{f}} e' \xto{\bar{g}} e''$ is locally
  cocartesian, as required. This proves (i),
  from which (ii) follows by the pullback square of
  Proposition~\ref{propn:cartfibtriplead}.
\end{proof}

\begin{cor}\label{cor:spancocartfib}
  Let $p \colon \mathcal{E} \to \mathcal{B}$ be as in
  Definition~\ref{defn:cartfibtriple}, and assume in
  addition:
  \begin{enumerate}[(1)]
  \item $\mathcal{E}$ has $p$-cocartesian edges over morphisms in
    $\mathcal{B}^{F}$.
  \item Consider a pullback square
    \csquare{a'}{b'}{a}{b}{f'}{\alpha}{\beta}{f}
    in $\mathcal{B}$ with $\alpha,\beta$ in $\mathcal{B}^{F}$ and
    $f',f$ in $\mathcal{B}^{B}$. Let $\bar{b}'$ be an object of
    $\mathcal{E}$ over $b'$,
    and suppose $\bar{b}' \xto{\bar{\beta}} \bar{b}$ is a $p$-cocartesian morphism over
    $\beta$ and $\bar{a} \xto{\bar{f}} \bar{b}$ and $\bar{a}' \xto{\bar{f'}} \bar{b}'$ are
    $p$-cartesian morphisms over $f$ and $f'$. Then in the commutative
    square
    \csquare{\bar{a}'}{\bar{b}'}{\bar{a}}{\bar{b}}{\bar{f}'}{\bar{\alpha}}{\bar{\beta}}{\bar{f}}
    induced by the universal property of $\bar{f}$, the morphism
    $\bar{\alpha}$ is again $p$-cocartesian.
  \end{enumerate}
  Then $\Span_{B,F}(\mathcal{E}) \to \Span_{B,F}(\mathcal{B})$ is a
  cocartesian fibration.
\end{cor}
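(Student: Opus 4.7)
The plan is to combine Proposition \ref{propn:SpanLocCoCart}, which already shows that $\Span_{B,F}(\mathcal{E}) \to \Span_{B,F}(\mathcal{B})$ is a locally coCartesian fibration, with the standard criterion (\cite{HTT}*{Proposition 2.4.2.8}) that a locally coCartesian fibration is coCartesian precisely when the collection of locally coCartesian edges is closed under composition. Hypothesis (1) strengthens Proposition \ref{propn:SpanLocCoCart}: since $\mathcal{E}$ now has genuinely $p$-coCartesian morphisms over every edge in $\mathcal{B}^{F}$, the right leg of any locally coCartesian span may be chosen to be $p$-coCartesian (not merely locally so). So the task reduces to showing that if $\sigma_{1} \colon e_{0} \xfrom{\bar{f}_{1}} e_{1}' \xto{\bar{g}_{1}} e_{1}$ and $\sigma_{2} \colon e_{1} \xfrom{\bar{f}_{2}} e_{2}' \xto{\bar{g}_{2}} e_{2}$ are two composable $1$-simplices of $\Span_{B,F}(\mathcal{E})$ with $\bar{g}_{1}, \bar{g}_{2}$ both $p$-coCartesian (and $\bar{f}_{1},\bar{f}_{2}$ $p$-Cartesian over morphisms in $\mathcal{B}^{B}$), the composite span is again locally $p$-coCartesian.

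The composition $\sigma_{2} \circ \sigma_{1}$ is given by forming the pullback $\bar{y} = e_{1}' \times_{e_{1}} e_{2}'$ in $\mathcal{E}$, which by Proposition \ref{propn:cartfibtriplead} exists and consists of a $p$-Cartesian morphism $\bar{f}_{1}' \colon \bar{y} \to e_{1}'$ over a backwards morphism together with a forwards morphism $\bar{\alpha} \colon \bar{y} \to e_{2}'$. The resulting span is $e_{0} \xfrom{\bar{f}_{1}\bar{f}_{1}'} \bar{y} \xto{\bar{g}_{2}\bar{\alpha}} e_{2}$, and by the criterion of Proposition \ref{propn:SpanLocCoCart} it is locally $p$-coCartesian \IFF{} its right leg $\bar{g}_{2}\bar{\alpha}$ is locally $p$-coCartesian.

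At this point hypothesis (2) does exactly the work required. The square in $\mathcal{B}$ obtained by projecting the pullback square above is of the form
\csquare{a'}{b'}{a}{b}{f'}{\alpha}{\beta}{f}
with $f = f_{2} \in \mathcal{B}^{B}$, $\beta = g_{1} \in \mathcal{B}^{F}$, and the other two sides in the appropriate subcategories (since the triple on $\mathcal{B}$ is adequate). By assumption $\bar{g}_{1}$ is $p$-coCartesian over $g_{1}$ and $\bar{f}_{2}, \bar{f}_{1}'$ are $p$-Cartesian over $f_{2}, f_{2}'$, so condition (2) immediately gives that $\bar{\alpha}$ is $p$-coCartesian. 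Since $p$-coCartesian morphisms are closed under composition, $\bar{g}_{2}\bar{\alpha}$ is $p$-coCartesian, hence locally so, finishing the verification.

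The main obstacle is really just bookkeeping: correctly identifying the pullback square produced by span composition with the square appearing in hypothesis (2), and verifying that the assignments of ``forwards''/``backwards'' and ``Cartesian''/``coCartesian'' decorations match. The actual content is already concentrated in the two standing hypotheses and the HTT criterion; beyond invoking them the argument is formal.
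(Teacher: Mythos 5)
Your proof is correct and follows essentially the same route as the paper: reduce via Proposition~\ref{propn:SpanLocCoCart} and \cite{HTT}*{Proposition 2.4.2.8} to closure of locally coCartesian edges under composition, then use Lemma~\ref{lem:cartpb} to identify the pullback computing the composite span and hypothesis (2) to see that its forward leg is $p$-coCartesian. The paper compresses this last step into a single sentence; your write-up just makes the bookkeeping explicit (modulo a harmless notational slip where you write $f_2'$ for the backwards morphism underlying $\bar{f}_1'$).
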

\begin{proof}
  We know from Proposition~\ref{propn:SpanLocCoCart} that
  $\Span_{B,F}(\mathcal{E}) \to \Span_{B,F}(\mathcal{B})$ is a locally
  cocartesian fibration. By \cite{HTT}*{Proposition 2.4.2.8} it
  therefore suffices to show that the locally cocartesian morphisms
  are closed under composition. Lemma~\ref{lem:cartpb} shows that this
  is indeed the case under the given assumptions.
\end{proof}

\section{Day Convolution for Double
  $\infty$-Categories}\label{sec:dayconv}
In this section we carry out the main technical construction of this
paper: We show that for a certain class of double \icats{}
$\mathcal{M}$, there exists a \emph{Day convolution} double \icat{}
$\widehat{\mathcal{M}}_{\mathcal{S}}$ such that for any non-symmetric \iopd{}
$\mathcal{O}$ we have a natural equivalence
\[ \Alg_{\mathcal{O}}(\widehat{\mathcal{M}}_{\mathcal{S}}) \simeq
  \Seg_{\mathcal{O} \times_{\Dop} \mathcal{M}}(\mathcal{S}).\] In
\S\ref{sec:unfold} we introduce an ``unfolding'' construction that we
use to define $\widehat{\mathcal{M}}_{\mathcal{S}}$ in \S\ref{sec:day}; we then
establish the universal property in \S\ref{sec:dayunivp}. Next we
prove in \S\ref{sec:daymon} that we may view associative algebras in
$\widehat{\mathcal{M}}$ as algebras in a family of monoidal
\icats{}. We also consider enriched variants of the Day convolution
construction in \S\ref{sec:enrdc}, and in \S\ref{subsec:enrcat} we
illustrate the theory by discussing the example of enriched \icats{}.

\subsection{An Unfolding Construction}\label{sec:unfold}
Suppose we have a cocartesian fibration
$p \colon \mathcal{E} \to \mathcal{U}$ and a cartesian fibration
$q \colon \mathcal{U} \to \mathcal{B}$. Our goal in this subsection
is to construct for every cocomplete \icat{} $\mathcal{X}$ a
cocartesian fibration $\widetilde{\mathcal{E}}_{\mathcal{X}} \to
\mathcal{B}$ with the universal property that for any functor
$\mathcal{C} \to \mathcal{B}$ there is a natural equivalence
\[ \Map_{/\mathcal{B}}(\mathcal{C},
  \widetilde{\mathcal{E}}_{\mathcal{X}}) \isoto \Map(\mathcal{C}
  \times_{\mathcal{B}} \mathcal{E}, \mathcal{X}).\]

\begin{remark}
  Recall that a functor of \icats{} $f$ is called an
  \emph{exponentiable}, \emph{flat}, or \emph{Conduch\'e fibration} if the functor $f^{*}$ given by
  pullback along $f$ has a right adjoint $f_{*}$. Both cartesian and
  cocartesian fibrations are examples of exponentiable fibrations,
  hence the composite $qp \colon \mathcal{E} \to \mathcal{B}$ is an
  exponentiable fibration. The universal property of
  $\widetilde{\mathcal{E}}_{\mathcal{X}}$ is that of $(qp)_{*}(\mathcal{X}
  \times \mathcal{E})$, but it is not clear from the latter that
  $\widetilde{\mathcal{E}}_{\mathcal{X}}$ will be a
  cocartesian fibration if $\mathcal{X}$ is cocomplete.
\end{remark}

To define $\widetilde{\mathcal{E}}_{\mathcal{X}}$ we first introduce
an ``unfolding construction'' that uses $p$ and $q$ to  
 construct a functor
$\mathcal{B} \to \Span(\CatI)$ that takes $b \in \mathcal{B}$ to the
fibre $\mathcal{E}_{b}$ of the composite
$\mathcal{E} \to \mathcal{B}$, and takes a morphism
$f \colon b \to b'$ to the top row in the diagram
\[
  \begin{tikzcd}
    \mathcal{E}_{b} \arrow{d}{p_{b}}& f^{*}\mathcal{E}_{b} \arrow{l}
    \arrow{r}{f_{!}} \arrow[phantom]{dl}[very near start]{\llcorner} \arrow{d}&
    \mathcal{E}_{b'} \arrow{d}{p_{b'}} \\
    \mathcal{U}_{b} & \mathcal{U}_{b'} \arrow{l}{f^{*}} \arrow[equals]{r} &
    \mathcal{U}_{b'},
  \end{tikzcd}
\]
where $f^{*} \colon \mathcal{U}_{b'} \to \mathcal{U}_{b}$ is the functor
given by the cartesian morphisms over $f$ and the left square is a
pullback; an object of $f^{*}\mathcal{E}_{b}$ then corresponds to a pair
$(x \in \mathcal{E}_{b}, u \in \mathcal{U}_{b'})$ such that $p(x)
\simeq f^{*}u$ in $\mathcal{U}_{b}$, and the top right morphism
takes $(x,u)$ to the cocartesian pushforward $\bar{f}_{!}x$ where
$\bar{f} \colon f^{*}u \to u$ is the $q$-cartesian morphism over $f$.

\begin{construction}
Let $q^{\vee}\colon \mathcal{U}^{\vee} \to \mathcal{B}^{\op}$ be the cocartesian
fibration dual to $q \colon \mathcal{U} \to \mathcal{B}$ (\ie{} the cocartesian
fibration corresponding to the same functor as $q$).
By \cite{freepres}*{Theorem 4.5}, the \emph{free} cocartesian 
fibration on $q^{\vee}$ is
$\mathcal{U}^{\vee} \times_{\mathcal{B}^{\op}}
(\mathcal{B}^{\op})^{\Delta^{1}} \to \mathcal{B}^{\op}$, where the
fibre product uses $q^{\vee}$ and evaluation at $0$ and the functor to
$\mathcal{B}^{\op}$ uses evaluation at $1$. Since $q^{\vee}$ is a
cocartesian fibration, the identity induces a functor
\[ \mathcal{U}^{\vee} \times_{\mathcal{B}^{\op}}
  (\mathcal{B}^{\op})^{\Delta^{1}} \to \mathcal{U}^{\vee} \]
over $\mathcal{B}^{\op}$ that preserves cocartesian morphisms. Dualizing
again, we obtain a morphism of cartesian fibrations
\[ (\mathcal{U}^{\vee} \times_{\mathcal{B}^{\op}}
  (\mathcal{B}^{\op})^{\Delta^{1}})^{\vee} \to \mathcal{U}\]
that preserves cartesian morphisms. The following lemma identifies the
source of this functor with $\mathcal{U}^{\vee}
\times_{\mathcal{B}^{\op}} \Tw^{r}(\mathcal{B}^{\op})$:
\end{construction}

\begin{lemma}
  For any functor $f \colon \mathcal{C} \to \mathcal{B}$, the
  cartesian fibration
  $(\mathcal{C} \times_{\mathcal{B}} \mathcal{B}^{\Delta^{1}})^{\vee}
  \to \mathcal{B}^{\op}$ dual to the free cocartesian fibration on $f$
  is equivalent to
\[ \mathcal{C} \times_{\mathcal{B}} \Tw^{r}(\mathcal{B}) \to \mathcal{B}^{\op}.\]
\end{lemma}
\begin{proof}
  We can write the cocartesian fibration $\mathcal{C}
  \times_{\mathcal{B}} \mathcal{B}^{\Delta^{1}} \to \mathcal{B}$ as
  the fibre product
  \[ (\mathcal{C} \times \mathcal{B}) \times_{(\mathcal{B} \times
      \mathcal{B})} \mathcal{B}^{\Delta^{1}} \]
  of cocartesian fibrations over $\mathcal{B}$. Since dualization of
  fibrations is an equivalence of \icats{}, it preserves fibre
  products, hence we obtain an equivalence
  \[ (\mathcal{C} \times_{\mathcal{B}} \mathcal{B}^{\Delta^{1}})^{\vee}
    \simeq (\mathcal{C} \times \mathcal{B}^{\op}) \times_{(\mathcal{B}
      \times \mathcal{B}^{\op})} (\mathcal{B}^{\Delta^{1}})^{\vee}
    \simeq \mathcal{C} \times_{\mathcal{B}}(\mathcal{B}^{\Delta^{1}})^{\vee}.\]
  By \cite{cois}*{Proposition A.2.4} the dual of $\mathcal{B}^{\Delta^{1}}
  \to \mathcal{B}$ is  $\Tw^{r}(\mathcal{B})
  \to \mathcal{B}^{\op}$, which completes the proof.
\end{proof}

\begin{defn}
  Given a cartesian fibration $\mathcal{U} \to \mathcal{B}$, we have
  constructed a canonical functor
  \[ \mathfrak{c}_{\mathcal{U}} \colon
    \mathcal{U}^{\vee}\times_{\mathcal{B}^{\op}} \Tw^{r}(\mathcal{B})
    \to \mathcal{U}.\] For $\mathcal{E} \to \mathcal{U}$ a cocartesian
  fibration, we define the \emph{unfolding} $\txt{Unf}(\mathcal{E})$ as the fibre product
  $(\mathcal{U}^{\vee} \times_{\mathcal{B}^{\op}}
  \Tw^{r}(\mathcal{B})) \times_{\mathcal{U}} \mathcal{E}$, using
  $\mathfrak{c}_{\mathcal{U}}$, and write
  $\overline{\mathfrak{c}}_{\mathcal{U}}$ for the induced map
  $\Unf(\mathcal{E}) \to \mathcal{E}$ over $\mathfrak{c}_{\mathcal{U}}$. The projection
  $\txt{Unf}(\mathcal{E}) \to \Tw^{r}(\mathcal{B})$ is then a
  cocartesian fibration, since it decomposes as a composite
  \[ (\mathcal{U}^{\vee} \times_{\mathcal{B}^{\op}}
  \Tw^{r}(\mathcal{B})) \times_{\mathcal{U}} \mathcal{E} \to
  \mathcal{U}^{\vee} \times _{\mathcal{B}^{\op}}
  \Tw^{r}(\mathcal{B}) \to \Tw^{r}(\mathcal{B}),\]
where the first map is a pullback of the cocartesian fibration
$\mathcal{E} \to \mathcal{U}$ and the second is a pullback of the
cocartesian fibration $\mathcal{U}^{\vee} \to \mathcal{B}^{\op}$.
\end{defn}

\begin{remark}\label{unfpb}
  Given a functor $\mathcal{C} \to \mathcal{B}$, we have a commutative
  diagram
  \[
    \begin{tikzcd}
      \txt{Unf}(\mathcal{C} \times_{\mathcal{B}}\mathcal{E})
      \arrow{rr} \arrow{dr} \arrow{dd} & & \mathcal{C}
      \times_{\mathcal{B}}\mathcal{E} \arrow{dr} \arrow{dd} \\
      & \txt{Unf}(\mathcal{E})   \arrow[crossing over]{rr} & &
      \mathcal{E}  \arrow{dd}\\
      (\mathcal{C} \times_{\mathcal{B}}\mathcal{U})^{\vee}
      \times_{\mathcal{C}^{\op}} \Tw^{r}(\mathcal{C})\arrow{rr} \arrow{dr} \arrow{dd} & &
      \mathcal{C} \times_{\mathcal{B}} \mathcal{U} \arrow{dr} \arrow{dd} \\
       & \mathcal{U}^{\vee}
       \times_{\mathcal{B}^{\op}}\Tw^{r}(\mathcal{B})
       \arrow[leftarrow,crossing over]{uu}
       \arrow[crossing over]{rr} & & \mathcal{U}
       \arrow{dd} \\
       \Tw^{r}(\mathcal{C}) \arrow{rr} \arrow{dr} & & \mathcal{C} \arrow{dr}\\
       & \Tw^{r}(\mathcal{B}) \arrow[leftarrow,crossing over]{uu} \arrow{rr} & & \mathcal{B}.
    \end{tikzcd}
  \]
  In the top cube, the back and front faces are cartesian by
  definition of unfolding, and the right face is cartesian since the
  bottom right and right composite squares are cartesian. This implies
  that the left square in the top cube
  is cartesian.  Moreover, since dualization of fibrations is
  compatible with pullbacks we have
  $(\mathcal{C} \times_{\mathcal{B}} \mathcal{U})^{\vee} \simeq
  \mathcal{U}^{\vee} \times_{\mathcal{B}^{\op}} \mathcal{C}^{\op}$,
  and hence
  \[ (\mathcal{C} \times_{\mathcal{B}} \mathcal{U})^{\vee}
    \times_{\mathcal{C}^{\op}} \Tw^{r}(\mathcal{C}) \simeq
    \mathcal{U}^{\vee} \times_{\mathcal{B}^{\op}} \Tw^{r}(\mathcal{C})
    \simeq \left(\mathcal{U}^{\vee} \times_{\mathcal{B}^{\op}}
      \Tw^{r}(\mathcal{B})\right) \times_{\Tw^{r}(\mathcal{B})}
    \Tw^{r}(\mathcal{C}).
  \]
  The bottom left face in the diagram is therefore cartesian, and so
  the left composite square is a pullback. Thus unfolding is
  compatible with base change, in the sense that we have a natural
  equivalence
  \[ \Unf(\mathcal{C} \times_{\mathcal{B}} \mathcal{E}) \isoto
    \Tw^{r}(\mathcal{C}) \times_{\Tw^{r}(\mathcal{B})} \Unf(\mathcal{E}).\]
\end{remark}

\begin{lemma}
  The cocartesian fibration $\txt{Unf}(\mathcal{E}) \to
  \Tw^{r}(\mathcal{B})$ corresponds to a functor $\mathfrak{U}_{\mathcal{E}} \colon \mathcal{B} \to \Span(\CatI)$.
\end{lemma}
\begin{proof}
  Given morphisms $a \xto{f} b \xto{g} c$ in $\mathcal{B}$, we have
  the commutative diagram of \icats{}
  \[
    \begin{tikzcd}
       {} & & g^{*}f^{*}\mathcal{E}_{a} \arrow{drr} \arrow{dl} \arrow{dd}
   \\
       & f^{*}\mathcal{E}_{a} \arrow{dd}\arrow{dl} \arrow{drr} & & & g^{*}\mathcal{E}_{b} \arrow{dd}\arrow{drr} \arrow{dl} \\
       \mathcal{E}_{a} \arrow{dd}& & \mathcal{U}_{c} \arrow{dl}
       \arrow[equals]{drr} & \mathcal{E}_{b}\arrow{dd} & & &
       \mathcal{E}_{c}\arrow{dd} \\
       & \mathcal{U}_{b} \arrow{dl} \arrow[equals]{drr} & & & \mathcal{U}_{c} \arrow[equals]{drr} \arrow{dl} \\
       \mathcal{U}_{a} & & & \mathcal{U}_{b} & & & \mathcal{U}_{c}       
    \end{tikzcd}
    \]
    and by Remark~\ref{rmk:spanprop} we must show that the commutative
    square in the top level is cartesian. But in the commutative cube
    the bottom, back left, and front right faces are cartesian, hence
    so is the top face.
\end{proof}

We can now define $\widetilde{\mathcal{E}}_{\mathcal{X}}$ using the
following construction:
\begin{defn}
  For any \icat{} $\mathcal{X}$, let
  $p_{\mathcal{X}} \colon \mathcal{F}_{\mathcal{X}} \to \CatI$ denote the cartesian fibration
  correspoding to the functor
  $\Fun(\blank, \mathcal{X}) \colon \CatI^{\op} \to \LCatI$. If
  $\mathcal{X}$ is cocomplete, then this is also a cocartesian
  fibration (with cocartesian morphisms given by left Kan
  extensions). We then have a locally cocartesian fibration
  $\Span_{B,F}(\mathcal{F}_{\mathcal{X}}) \to \Span(\CatI)$ by
  Proposition~\ref{propn:SpanLocCoCart}, where
  $\mathcal{F}_{\mathcal{X}}$ is equipped with the triple structure
  from \cref{defn:cartfibtriple}.
\end{defn}

\begin{defn}
  Given a cocartesian fibration $\mathcal{E} \to \mathcal{U}$ and a
  cartesian fibration $\mathcal{U} \to \mathcal{B}$, we let
  $\widetilde{\mathcal{E}}_{\mathcal{X}}$ for a cocomplete \icat{} $\mathcal{X}$
  be defined by the pullback
  \[
    \ncsquare{\widetilde{\mathcal{E}}_{\mathcal{X}}}{\Span_{B,F}(\mathcal{F}_{\mathcal{X}})}{\mathcal{B}}{\Span(\CatI).}{}{}{p_{\mathcal{X}}}{\mathfrak{U}_{\mathcal{E}}}
  \]
  Then $\widetilde{\mathcal{E}}_{\mathcal{X}} \to \mathcal{B}$ is a
  locally cocartesian fibration.
\end{defn}

\begin{lemma}
  Let $\mathcal{X}$ be a cocomplete \icat{}.  The locally cocartesian
  fibration $\widetilde{\mathcal{E}}_{\mathcal{X}} \to \mathcal{B}$ is
  a cocartesian fibration. 
\end{lemma}
\begin{proof}
  We must show
  that the locally cocartesian morphisms are closed under composition.
  For morphisms $a \xto{f} b \xto{g} c$ in $\mathcal{B}$, we have the
  cartesian square
  \[
    \ncsquare{g^{*}f^{*}\mathcal{E}_{a}}{g^{*}\mathcal{E}_{b}}{f^{*}\mathcal{E}_{a}}{\mathcal{E}_{b}}{F'}{G'}{G}{F}
  \]
  as above, and we must show that the mate transformation
  \[ F'_{!}G'^{*} \to G^{*}F_{!} \]
  of functors $\Fun(f^{*}\mathcal{E}_{a}, \mathcal{X}) \to
  \Fun(g^{*}\mathcal{E}_{b}, \mathcal{X})$
  is an equivalence. At $\phi \in \Fun(f^{*}\mathcal{E}_{a},
  \mathcal{X})$ and $x \in g^{*}\mathcal{E}_{b}$, the mate
  transformation evaluates to the natural map of colimits
 \[ \colim_{y \in (g^{*}f^{*}\mathcal{E}_{a})_{/x}} \phi(G'y) \to
    \colim_{z \in (f^{*}\mathcal{E}_{a})_{/Gx}} \phi(z) \]
  arising from the functor $(g^{*}f^{*}\mathcal{E}_{a})_{/x} \to
  (f^{*}\mathcal{E}_{a})_{/Gx}$ induced by $G$. It thus suffices to
  show that this functor is cofinal.

  By definition, $(g^{*}f^{*}\mathcal{E}_{a})_{/x}$ is the pullback
  $g^{*}f^{*}\mathcal{E}_{a} \times_{g^{*}\mathcal{E}_{b}}
  (g^{*}\mathcal{E}_{b})_{/x}$. Since $g^{*}\mathcal{E}_{b}$ and
  $g^{*}f^{*}\mathcal{E}_{a}$ are pulled back along $\mathcal{U}_{c}
  \to \mathcal{U}_{b}$, we can rewrite this to see that there is a
  natural pullback square
  \[
    \nlcsquare{(g^{*}f^{*}\mathcal{E}_{a})_{/x}}{(f^{*}\mathcal{E}_{a})_{/Gx}}{(\mathcal{U}_{c})_{/\pi_{c}x}}{(\mathcal{U}_{b})_{/\pi_{b}Gx},}
  \]
  where $\pi_{t}$ denotes the projection $\mathcal{E}_{t}\to
  \mathcal{U}_{t}$. In this square the right vertical functor is a
  cocartesian fibration, and the bottom horizontal functor is cofinal
  since both $(\mathcal{U}_{c})_{/\pi_{c}x}$ and
  $(\mathcal{U}_{b})_{/\pi_{b}Gx}$ have a terminal object, which is
  preserved by this functor. It follows by  \cite{HTT}*{Proposition
    4.1.2.15} that the top horizontal functor is also cofinal, as
  required.
\end{proof}

\begin{remark}
  The cocartesian fibration $\widetilde{\mathcal{E}}_{\mathcal{X}} \to
  \mathcal{B}$ corresponds to a functor $\mathcal{B} \to \CatI$. This
  takes $b \in \mathcal{B}$ to $\Fun(\mathcal{E}_{b}, \mathcal{X})$
  and a morphism $f \colon b \to b'$ to the composite functor
  \[ \Fun(\mathcal{E}_{b}, \mathcal{X}) \to \Fun(f^{*}\mathcal{E}_{b},
    \mathcal{X}) \to \Fun(\mathcal{E}_{b'}, \mathcal{X})\]
  where the first functor is given by composition with
  $f^{*}\mathcal{E}_{b} \to \mathcal{E}_{b}$ and the second by left
  Kan extension along $f^{*}\mathcal{E}_{b} \to
  \mathcal{E}_{b'}$. Both $f^{*}\mathcal{E}_{b}$ and
  $\mathcal{E}_{b'}$ are cocartesian fibrations over
  $\mathcal{U}_{b'}$, and the functor $f_{!} \colon
  f^{*}\mathcal{E}_{b} \to \mathcal{E}_{b'}$ preserves cocartesian morphisms. The
  following lemma therefore implies that the left Kan extension along
  $f_{!}$ can be computed fibrewise, \ie{} for $\Phi \colon
  f^{*}\mathcal{E}_{b} \to \mathcal{X}$ and $x \in
  \mathcal{E}_{b'}$ over $u \in \mathcal{U}_{b'}$ we have
  \[ (f_{!})_{!}\Phi(x) \simeq \colim_{(\mathcal{E}_{b,f^{*}u})_{/x}}
    \Phi, \] where we have used the equivalence
  $(f^{*}\mathcal{E}_{b})_{u} \simeq \mathcal{E}_{b,f^{*}u}$, and the
  slice
  \[(\mathcal{E}_{b,f^{*}u})_{/x}:= \mathcal{E}_{b,f^{*}u}
  \times_{\mathcal{E}_{b',u}} (\mathcal{E}_{b',u})_{/x}\] is defined
  using the functor $\mathcal{E}_{b,f^{*}u} \to \mathcal{E}_{b',u}$
  given by cocartesian pushforward along the cartesian morphism $\bar{f} \colon f^{*}u \to
  u$. We obtain the following description of the functor
  $\Fun(\mathcal{E}_{b}, \mathcal{X}) \to \Fun(\mathcal{E}_{b'},
  \mathcal{X})$ arising from the cocartesian fibration
  $\widetilde{\mathcal{E}}_{\mathcal{X}} \to \mathcal{B}$: For $\Psi
  \colon \mathcal{E}_{b} \to \mathcal{X}$, its image is the functor
  $\mathcal{E}_{b'}\to \mathcal{X}$ that to $x \in \mathcal{E}_{b',u}$
  assigns
  \[ \colim_{(y, \bar{f}_{!}y \to x) \in
      (\mathcal{E}_{b,f^{*}u})_{/x}} \Psi(y).\]
\end{remark}

\begin{lemma}\label{lem:cocartlke}
  Consider a commutative triangle of \icats{}
  \[
    \begin{tikzcd}
      \mathcal{E} \arrow{rr}{f} \arrow{dr}[below left]{p} & & \mathcal{F}
      \arrow{dl}{q} \\
       & \mathcal{B},
    \end{tikzcd}
  \]
  where $p$ and $q$ are cocartesian fibrations and $f$ preserves
  cocartesian morphisms. Then for $x \in \mathcal{F}_{b}$ the
  inclusion
  \[ \mathcal{E}_{b/x} := \mathcal{E}_{b} \times_{\mathcal{F}_{b}}
    \mathcal{F}_{b/x} \to \mathcal{E} \times_{\mathcal{F}}
    \mathcal{F}_{/x} =: \mathcal{E}_{/x}\]
  is cofinal. In particular, if $\mathcal{C}$ is a cocomplete
  \icat{} then the left Kan extension $f_{!}F$ along $f$ of any functor $F
  \colon \mathcal{E} \to \mathcal{C}$ can be computed fibrewise over
  $\mathcal{B}$, \ie{} for $x \in \mathcal{F}_{b}$ we have
  \[ f_{!}F(x) \simeq \colim_{y \in \mathcal{E}_{b/x}} F(y).\]
\end{lemma}
\begin{proof}
  By \cite{HTT}*{Theorem 4.1.3.1} it suffices to check that for every
  object $\eta = (y, f(y) \xto{\phi} x)$ in $\mathcal{E}_{/x}$, the \icat{}
  $(\mathcal{E}_{b/x})_{\eta/}$ is weakly contractible. This \icat{}
  has as objects maps $y \to y'$ over $q(\phi)$ together with commutative triangles
  \[
    \begin{tikzcd}
      f(y) \arrow{dr}{\phi} \arrow{dd} \\
      & x \\
      f(y') \arrow{ur}{\phi'}
    \end{tikzcd}
  \]
  where $\phi'$ lies over $\id_{b}$. It therefore has an initial
  object, given by the cocartesian morphism $\psi \colon y \to y'$
  over $q(\phi)$ together with the canonical factorization
  $f(y) \xto{f(\psi)} f(y') \to x$ that exists since $f(\psi)$ is
  again a cocartesian morphism.
\end{proof}

\begin{remark}\label{tildeEsec}
  We can identify sections of the cocartesian fibration
  $\widetilde{\mathcal{E}}_{\mathcal{X}}$ as follows: For any functor
  $\phi \colon \mathcal{C} \to \mathcal{B}$, we have
  \[
      \Map_{/\mathcal{B}}(\mathcal{C},
      \widetilde{\mathcal{E}}_{\mathcal{X}})  \simeq \left\{
        \ncsquare{\mathcal{C}}{\Span_{B,F}(\mathcal{F}_{\mathcal{X}})}{\mathcal{B}}{\Span(\CatI)}{}{\phi}{\Span(p_{\mathcal{X}})}{\mathfrak{U}_{\mathcal{E}}}
      \right\}
      \subseteq
      \left\{
        \ncsquare{\Tw^{r}(\mathcal{C})}{\mathcal{F}_{\mathcal{X}}}{\Tw^{r}(\mathcal{B})}{\CatI}{}{\Tw^{r}(\phi)}{p_{\mathcal{X}}}{}
      \right\}.
  \]
  By the pullback square in Proposition~\ref{propn:cartfibtriplead},
  the only condition for a point of the right-hand $\infty$-groupoid
  to lie in the image of $\Map_{/\mathcal{B}}(\mathcal{C},
  \widetilde{\mathcal{E}}_{\mathcal{X}})$ is that the functor
  $\Tw^{r}(\mathcal{C}) \xto{\Phi} \mathcal{F}_{\mathcal{X}}$ takes morphisms
  in $\Tw^{r}(\mathcal{C})$ of the form
  \[
    \begin{tikzcd}
      c \arrow{d}{f} \arrow[equals]{r} & c \arrow[equals]{d} \\
      d & c \arrow{l}{f}
    \end{tikzcd}
  \]
  to cartesian morphisms in $\mathcal{F}_{\mathcal{X}}$. This amounts
  to the natural transformation 
  \[
    \begin{tikzcd}
      \phi(f)^{*}\mathcal{E}_{\phi(c)} \arrow{dd} \arrow[dr,
      ""{name=U,inner sep=1pt,below left}, "\Phi(f)"{above right}] \\
      & \mathcal{X} \\
      |[alias=D]| \mathcal{E}_{\phi(c)} \arrow{ur}[below right]{\Phi(\id_{c})}
      \arrow[phantom, from=U, to=D, ""{name=UU, near start},
      ""{name=DD, near end}]
      \arrow[Rightarrow,
      from=UU, to=DD]
    \end{tikzcd}
  \]
  being an equivalence. Since $\mathcal{F}_{\mathcal{X}}$ is by
  definition the cartesian fibration for the functor
  $\Fun(\blank, \mathcal{X})$ we can use \cite{freepres}*{Proposition
    7.3} and \cref{unfpb} to obtain an equivalence
  \[
      \left\{
        \ncsquare{\Tw^{r}(\mathcal{C})}{\mathcal{F}_{\mathcal{X}}}{\Tw^{r}(\mathcal{B})}{\CatI}{}{\Tw^{r}(\phi)}{p_{\mathcal{X}}}{}
      \right\}
      \simeq
      \Map(\Tw^{r}(\mathcal{C})\times_{\Tw^{r}(\mathcal{B})}\Unf(\mathcal{E}),
      \mathcal{X}) \simeq \Map(\Unf(\mathcal{C} \times_{\mathcal{B}}
      \mathcal{E}), \mathcal{X}),
  \]
  natural in $\mathcal{C}$, under which $\Map_{/\mathcal{B}}(\mathcal{C},
  \widetilde{\mathcal{E}}_{\mathcal{X}})$ is identified with the
  functors
  $\Tw^{r}(\mathcal{C})\times_{\Tw^{r}(\mathcal{B})}\txt{Unf}(\mathcal{E})
  \to \mathcal{X}$ that take morphisms
  \[ (f \colon c \to d, e \in \mathcal{E}_{\phi(c)}, u \in
    \mathcal{U}_{\phi(d)}, \pi_{\phi(c)}e \simeq \phi(f)^{*}u) \to
    (\id_{c}, e, \phi(f)^{*}u, \pi_{\phi(c)}e \simeq \phi(f)^{*}u),\] over the morphism in
  $\Tw^{r}(\mathcal{C})$ above, to equivalences in $\mathcal{X}$.
\end{remark}

\begin{notation}
  Let $W_{\mathcal{U}/\mathcal{B}}$ denote the class of morphisms in
  $\mathcal{U}^{\vee} \times_{\mathcal{B}^{\op}} \Tw^{r}(\mathcal{B})$
  (or $\Unf(\mathcal{U})$)
  of the form
    \[ (u \in \mathcal{U}_{b}, f \colon a \to b) \to (f^{*}u, \id_{a}),\]
  corresponding to the cocartesian morphism $u \to f^{*}u$ in
  $\mathcal{U}^{\vee}$ and the morphism
    \[
    \begin{tikzcd}
      a \arrow{d}{f} \arrow[equals]{r} & a \arrow[equals]{d} \\
      b & a \arrow{l}{f}
    \end{tikzcd}
  \]
  in $\Tw^{r}(\mathcal{B})$; this is the union  over $b \in
  \mathcal{B}$ of the classes
  $W_{\mathcal{U}/\mathcal{B}, b}$ of such morphisms that lie over $b$.
  Then let
  $\overline{W}_{\mathcal{E}/\mathcal{B}}$ denote the class of
  morphisms in $\Unf(\mathcal{E})$ consisting of cocartesian morphisms
  lying over the morphisms in $W_{\mathcal{U}}$.
\end{notation}

\begin{remark}\label{secisloc}
  Using this notation, \cref{tildeEsec} identifies the space
  $\Map_{/\mathcal{B}}(\mathcal{C},
  \widetilde{\mathcal{E}}_{\mathcal{X}})$ of sections with the space
  of functors $\Unf(\mathcal{C} \times_{\mathcal{B}}\mathcal{E}) \to
  \mathcal{X}$ that take the morphisms in $\overline{W}_{\mathcal{C}
    \times_{\mathcal{B}} \mathcal{E}/\mathcal{C}}$ to equivalences in
  $\mathcal{X}$, or equivalently the space
  \[ \Map(\Unf(\mathcal{C} \times_{\mathcal{B}}
    \mathcal{E})[\overline{W}_{\mathcal{C} \times_{\mathcal{B}}
      \mathcal{E}/\mathcal{C}}^{-1}], \mathcal{X})\] of functors from
  the localization at
  $\overline{W}_{\mathcal{C} \times_{\mathcal{B}}
    \mathcal{E}/\mathcal{C}}$. Our next goal is to identify this
  localization with $\mathcal{C} \times_{\mathcal{B}} \mathcal{E}$.
\end{remark}

\begin{propn}
  The functor $\mathfrak{c}_{\mathcal{U}} \colon \mathcal{U}^{\vee}
  \times_{\mathcal{B}^{\op}}
  \Tw^{r}(\mathcal{B}) \to \mathcal{U}$ exhibits $\mathcal{U}$ as the
  localization at the class $W_{\mathcal{U}/\mathcal{B}}$.
\end{propn}
\begin{proof}
  The functor $\mathfrak{c}_{\mathcal{U}}$ is by construction a map of
  cartesian
  fibrations over $\mathcal{B}$ that preserves cartesian morphisms. On
  fibres over $b \in \mathcal{B}$, the functor
  \[ \mathfrak{c}_{\mathcal{U},b} \colon \mathcal{U}^{\vee} \times_{\mathcal{B}^{\op}}
    (\mathcal{B}_{b/})^{\op}  \to \mathcal{U}_{b}\]
  takes $(u \in \mathcal{U}_{a}, f \colon b \to a)$ to $f^{*}u
  \in \mathcal{U}_{b}$. This has a canonical section $s_{b}$, taking
  $u \in \mathcal{U}_{b}$ to $(u, \id_{b})$. Moreover, the cocartesian
  morphisms in $\mathcal{U}^{\vee}$ determine a natural transformation
  $\id \to s_{b}\mathfrak{c}_{\mathcal{U},b}$, given for $(u, f \colon
  b \to a)$ by the cocartesian morphism $u \to f^{*}u$ and the
  triangle
  \[
    \begin{tikzcd}
      {} & b \arrow{dl}{f} \arrow[equals]{dr} \\
      a & & b \arrow{ll}{f}.
    \end{tikzcd}
  \]
  It follows that $\mathfrak{c}_{\mathcal{U},b}$ exhibits
  $\mathcal{U}_{b}$ as the localization of
  $\mathcal{U}^{\vee} \times_{\mathcal{B}^{\op}}
  (\mathcal{B}_{b/})^{\op}$ at the class $W_{\mathcal{U}/\mathcal{B},b}$.
  The same argument
  also shows that $\mathcal{U}_{b}$ is the localization at the larger
  class $W'_{\mathcal{U}/\mathcal{B},b}$ of morphisms
  \[ (u, f\colon b \to a)  \to (u', f' \colon b \to a') \]
  over
    \[
    \begin{tikzcd}
      {} & b \arrow{dl}{f} \arrow{dr}{f'} \\
      a & & a' \arrow{ll}{\alpha}.
    \end{tikzcd}
  \]
  and $\alpha^{*}u \to u'$,
  such that the induced morphism $f^{*}u \simeq f'^{*}\alpha^{*}u \to
  f'^{*}u'$ is an equivalence in $\mathcal{U}_{b}$. This class is
  compatible with cartesian pullback, in the sense that
  $\beta^{*}W'_{\mathcal{U}/\mathcal{B},b} \subseteq W'_{\mathcal{U}/\mathcal{B},b'}$ for
  $\beta \colon b' \to b$, and so \cite{HinichLoc}*{Proposition 2.1.4} 
  implies that $\mathfrak{c}_{\mathcal{U}}$ exhibits $\mathcal{U}$ as
  the localization of $\mathcal{U}^{\vee} \times_{\mathcal{B}}^{\op}
  \Tw^{r}(\mathcal{B})$ at $W'_{\mathcal{U}/\mathcal{B}} := \bigcup_{b}
  W'_{\mathcal{U}/\mathcal{B},b}$. It thus only remains to see that localizing at
  $W'_{\mathcal{U}/\mathcal{B}}$ is the same as localizing at
  $W_{\mathcal{U}/\mathcal{B}}$, which follows from applying the 2-for-3
  property of localizations using the diagram
  \[
    \begin{tikzcd}
      b \arrow[equals]{r} \arrow{d}{f} & b \arrow{d}{f'}
      \arrow[equals]{r} & b \arrow[equals]{d} \\
      a & a' \arrow{l}{\alpha} & b \arrow{l}{f'}
    \end{tikzcd}
  \]
  in $\Tw^{r}(\mathcal{B})$.
\end{proof}

\begin{cor}\label{unfloc}
  The functor $\overline{\mathfrak{c}}_{\mathcal{U}} \colon
  \Unf(\mathcal{E}) \to \mathcal{E}$ exhibits $\mathcal{E}$ as the
  localization of $\Unf(\mathcal{E})$ at the class
  $\overline{W}_{\mathcal{E}/\mathcal{B}}$.
\end{cor}
\begin{proof}
  It follows from \cite{HinichLoc}*{Proposition 2.1.4} and its proof
  that if $\mathcal{X} \to \mathcal{Y}$ is a cocartesian fibration and
  $\eta \colon \mathcal{Y}' \to \mathcal{Y}$ exhibits $\mathcal{Y}$ as the
  localization of $\mathcal{Y}'$ at the morphisms in $W$, then the
  canonical functor $\mathcal{X}' \to \mathcal{X}$ from the pullback
  $\mathcal{X}'$ of $\mathcal{X}$ along $\eta$ exhibits $\mathcal{X}$
  as the localization of $\mathcal{X}'$ at the cocartesian morphisms
  over $W$.
\end{proof}

From this the universal property of
$\widetilde{\mathcal{E}}_{\mathcal{X}}$ now follows using \cref{secisloc}:
\begin{cor}\label{cor:EXuniv}
  For any \icat{} $\mathcal{X}$, there is a natural equivalence
  \[ \Map_{/\mathcal{B}}(\mathcal{C},
    \widetilde{\mathcal{E}}_{\mathcal{X}}) \simeq \Map(\mathcal{C}
    \times_{\mathcal{B}} \mathcal{E}, \mathcal{X}),\]
  natural in $\mathcal{C} \in \Cat_{\infty/\mathcal{B}}$. \qed
\end{cor}

\begin{remark}\label{rmk:unfftr}
  We remark briefly on the naturality of the construction in the
  cocartesian fibration. Suppose then that we have a commutative
  triangle
  \[
    \begin{tikzcd}
      \mathcal{E}_{0} \arrow{rr}{\phi} \arrow{dr}[below left]{p_{0}} &
      & \mathcal{E}_{1} \arrow{dl}{p_{1}} \\
       & \mathcal{U},
    \end{tikzcd}
  \]
  and a cartesian fibration $q \colon \mathcal{U} \to \mathcal{B}$. We
  can replace the triangle by a cocartesian fibration $p \colon \mathcal{E} \to
  \mathcal{U} \times \Delta^{1}$ and then apply the construction to
  $p$ and $q \times \Delta^{1} \colon \mathcal{U} \times \Delta^{1}
  \to \mathcal{B} \times \Delta^{1}$ to obtain for any cocomplete
  \icat{} $\mathcal{X}$ a cocartesian fibration
  $\widetilde{\mathcal{E}}_{\mathcal{X}} \to \mathcal{B} \times
  \Delta^{1}$. By naturality of the construction the fibres at $i =
  0,1$ identify with $\widetilde{\mathcal{E}}_{i,\mathcal{X}} \to
  \mathcal{B}$ and so this cocartesian fibration corresponds to a
  commutative triangle
  \[
    \begin{tikzcd}
      \widetilde{\mathcal{E}}_{0,\mathcal{X}} \arrow{rr} \arrow{dr} &
      & \widetilde{\mathcal{E}}_{1,\mathcal{X}} \arrow{dl} \\
       & \mathcal{B},
    \end{tikzcd}
  \]
  where the horizontal functor preserves cocartesian morphisms. On the
  fibre over $b \in \mathcal{B}$, this is the functor
  $\Fun(\mathcal{E}_{0,b}, \mathcal{X}) \to \Fun(\mathcal{E}_{1,b},
  \mathcal{X})$ given by left Kan extension along
  $\phi_{b}\colon \mathcal{E}_{0,b} \to \mathcal{E}_{1,b}$. Making the
  same construction with $\Delta^{1}$ replaced by $\Delta^{n}$ for all
  $n$ it is easy to see that we get a functor
  $\widetilde{(\blank)}_{\mathcal{X}}$ from (small) cocartesian
  fibrations over $\mathcal{B}$ to (large) cocartesian fibrations over
  $\mathcal{B}$.
\end{remark}

\subsection{The Day Convolution Double
  $\infty$-Category}\label{sec:day}
We now apply the construction of the previous subsection to obtain
the Day convolution for a double \icat{}. First we need some notation:
\begin{defn}
Let $\bbS^{n}$ denote the partially ordered set of pairs of integers
$(i,j)$, $0 \leq i \leq j \leq n$, with $(i,j) \leq (i',j')$ if $i
\leq i' \leq j' \leq j$. This determines a functor $\bbS^{\bullet}
\colon \simp \to \Cat$ by taking $\phi \colon [n] \to [m]$ to the
functor $\bbS^{n} \to \bbS^{m}$ that sends $(i,j)$ to $(\phi(i),
\phi(j))$; we write $\hbbS \to \Dop$ for the cartesian fibration
corresponding to this functor. We can also define a functor $\Pi
\colon \hbbS \to \Dop$ by sending $([n], (i,j))$ to $[j-i]$, with a
map $([n], (i,j)) \to ([m], (i',j'))$, which corresponds to a map
$\phi \colon [m] \to [n]$ in $\simp$ such that $(i,j) \leq
(\phi(i'),\phi(j'))$ in $\bbS^{m}$, to the map $[j'-i'] \to [j-i]$
obtained by restricting $\phi$ to a map $\{i',i'+1,\ldots,j'\} \to
\{i,i+1,\ldots,j\}$. 
\end{defn}

\begin{remark}
  We can identify $\bbS^{n}$ with $\Tw^{r}(\Delta^{n})$ and with
  $\simp^{\txt{int},\op}_{/[n]}$.   
\end{remark}

\begin{defn}
  Let $\mathcal{M} \to \Dop$ be a double \icat{}. Then the base change
  \[\bbSM := \mathcal{M} \times_{\Dop} \hbbS \to
  \hbbS\] along the functor $\Pi$ is a cocartesian
  fibration.  Applying the unfolding construction of the previous
  subsection to this together with the cartesian fibration $\hbbS \to
  \Dop$, we get a new cocartesian fibration
  $\txt{Unf}(\bbSM) \to \Tw^{r}(\Dop)$, corresponding
  to a functor $\mathfrak{U}_{\bbSM} \colon \Dop \to
  \Span(\CatI)$, from which we obtain another cocartesian fibration
  \[ \widehat{\mathcal{M}}^{+}_{\mathcal{X}} :=
    \widetilde{\bbSM}_{\mathcal{X}} \to \Dop,\]
  where $\mathcal{X}$ is any cocomplete \icat{}.
\end{defn}

\begin{remark}\label{rmk:hMXdesc}
  The cocartesian fibration $\widehat{\mathcal{M}}^{+}_{\mathcal{X}}
  \to \Dop$ corresponds to a functor $\Dop \to \CatI$ that takes $[n]$
  to $\Fun(\mathcal{M} \times_{\Dop} \bbS^{n}, \mathcal{X})$ and a
  morphism $\phi \colon [m] \to [n]$ in $\simp$ to
  \[ \Fun(\mathcal{M} \times_{\Dop} \bbS^{n}, \mathcal{X}) \to
    \Fun(\phi^{*}(\mathcal{M} \times_{\Dop} \bbS^{n}), \mathcal{X})
    \to \Fun(\mathcal{M} \times_{\Dop} \bbS^{m}, \mathcal{X})\] where
  $\phi^{*}(\mathcal{M} \times_{\Dop} \bbS^{n})$ is equivalently the
  pullback $\mathcal{M} \times_{\Dop} \bbS^{m}$ along the composite
  $\bbS^{m} \xto{\phi} \bbS^{n} \to \Dop$, the
  first functor is given by composition with the induced functor
  $\mathcal{M} \times_{\Dop} \bbS^{n} \to \phi^{*}(\mathcal{M}
  \times_{\Dop} \bbS^{n})$, and the second by left Kan extension along
  $\phi^{*}(\mathcal{M} \times_{\Dop} \bbS^{n}) \to \mathcal{M}
  \times_{\Dop} \bbS^{m}$. Since both $\phi^{*}(\mathcal{M}
  \times_{\Dop} \bbS^{n})$ and $\mathcal{M}
  \times_{\Dop} \bbS^{m}$ are cocartesian fibrations over $\bbS^{m}$,
  and the functor preserves cocartesian morphisms, \cref{lem:cocartlke}
  implies that this left Kan extension is given at $(x \in
  \mathcal{M}_{j-i}, (i,j) \in \bbS^{m})$ by the colimit over 
  \[\mathcal{M}_{\phi(j)-\phi(i)/x} := \mathcal{M}_{\phi(j)-\phi(i)}
    \times_{\mathcal{M}_{j-i}} \mathcal{M}_{j-i/x},\]
  where the functor $\mathcal{M}_{\phi(j)-\phi(i)} \to
  \mathcal{M}_{j-i}$ arises from the cocartesian morphisms over the
  restriction of $\phi$ to an (active) morphism $[j-i] \cong
  \{i,i+1,\ldots,j\} \to \{\phi(i),\phi(i)+1,\ldots,\phi(j)\} \cong
  [\phi(j)-\phi(i)]$. Note in particular that this is the identity if
  $\phi$ is an inert morphism.
\end{remark}

\begin{remark}\label{rmk:d1desc}
  Continuing from \cref{rmk:hMXdesc}, let us spell out the description
  a little further in
  the case of $d_{1} \colon [1] \to [2]$: A functor $F \colon \mathcal{M}
  \times_{\Dop} \bbS^{1} \to \mathcal{X}$ assigns to every $x \in
  \mathcal{M}_{1}$ a span
  \[ F(d_{1,!}x) \from F(x) \to F(d_{0,!}x)\]
  in $\mathcal{X}$, while a functor $G \colon \mathcal{M}
  \times_{\Dop} \bbS^{2} \to \mathcal{X}$ assigns to $(x,y) \in
  \mathcal{M}_{2} \simeq \mathcal{M}_{1} \times_{\mathcal{M}_{0}}
  \mathcal{M}_{1}$ a diagram
  \[
    \begin{tikzcd}
      {} & & G(x,y) \arrow{dl} \arrow{dr} \\
      & G(x) \arrow{dl} \arrow{dr}& & G(y) \arrow{dl} \arrow{dr} \\
      G(d_{0,!}x) & & G(d_{1,!}x) \simeq G(d_{0,!}y) & & G(d_{1,!}y).
    \end{tikzcd}
  \]
  In the first step, this is taken to the functor
  $d_{1}^{*}(\mathcal{M} \times_{\Dop} \bbS^{2}) \to \mathcal{X}$ that
  to $(x,y)$ assigns the span
  \[ G(d_{0,!}x) \from G(x,y) \to G(d_{1,!}y),\]
  which then in the second step is taken to the functor $\mathcal{M}
  \times_{\Dop} \bbS^{1} \to \mathcal{X}$ that to $x \in
  \mathcal{M}_{1}$ assigns
  \[ G(d_{0,!}x) \from \colim_{(x',y') \in \mathcal{M}_{2/x}} G(x',y')
    \to G(d_{1,!}x),\]
  where the colimit is over the fibre product $\mathcal{M}_{2/x} :=
  \mathcal{M}_{2} \times_{\mathcal{M}_{1}} \mathcal{M}_{1/x}$ defined
  using $d_{1,!} \colon \mathcal{M}_{2}\to \mathcal{M}_{1}$.
\end{remark}

\begin{remark}
  Note that if $\phi \colon [m] \to [n]$ is an inert morphism in
  $\simp$, then $\phi^{*}(\mathcal{M} \times_{\Dop} \bbS^{n}) \to
  \mathcal{M} \times_{\Dop} \bbS^{m}$ is an equivalence, and so the
  functor
  \[ \Fun(\mathcal{M} \times_{\Dop} \bbS^{n}, \mathcal{X}) \to
    \Fun(\mathcal{M} \times_{\Dop} \bbS^{m}, \mathcal{X}) \]
  is simply given by restriction.
\end{remark}

We now define a subobject of $\widehat{\mathcal{M}}_{\mathcal{X}}^{+}$
that, in good cases, will be a double \icat{}:
\begin{defn}
  Let $\bbL^{n}$ be the full subcategory of $\bbS^{n}$ on the objects
  $(i,j)$ such that $j-i \leq 1$. We define $\bbL
  \mathcal{M}_{n}$ to be the pullback
  \[
    \nlcsquare{\bbL\mathcal{M}_{n}}{\bbSM_{n}}{\bbL^{n}}{\bbS^{n},}
  \]
  and write $\widehat{\mathcal{M}}_{\mathcal{X}}$ for the full
  subcategory of $\widehat{\mathcal{M}}_{\mathcal{X}}^{+}$ spanned by the
  functors $\bbSM_{n} \to \mathcal{X}$ that are right Kan
  extensions of their restrictions to $\bbL\mathcal{M}_{n}$, for all $n$.
\end{defn}

\begin{lemma}\label{lem:bbLM}
  A functor $F \colon \bbSM_{n} \to \mathcal{X}$ is a right Kan
  extension of its restriction to $\bbL\mathcal{M}_{n}$ \IFF{} for
  every object $((i,j) \in \bbS^{n}, x_{ij} \in \mathcal{M}_{j-i})$ and
  every integer $k$, $i \leq k \leq j$, the commutative square
  \[
\nlcsquare{F(x_{ij})}{F(x_{ik})}{F(x_{kj})}{F(x_{kk})}
\]
is cartesian, where $x_{ij} \to x_{i'j'}$ denotes the cocartesian
morphism over $(i,j) \to (i',j')$.
\end{lemma}
\begin{proof}
  For $\xi = ((i,j) \in \bbS^{n}, x_{ij} \in \mathcal{M}_{j-i})$, let
  $\bbL\mathcal{M}_{n,\xi/}'$ denote the full subcategory of
  $\bbL\mathcal{M}_{n,\xi/} := \bbL\mathcal{M}_{n} \times_{\bbSM_{n}}
  \bbSM_{n/\xi}$ spanned by the cocartesian morphisms. Then
  $\bbL\mathcal{M}_{n,\xi/}' \simeq \bbL^{n}_{(i,j)/} \simeq
  \bbL^{j-i}$ and the inclusion $\bbL\mathcal{M}_{n,\xi/}'
  \hookrightarrow \bbL\mathcal{M}_{n,\xi/}$ is coinitial. It follows
  that $F$ is a right Kan extension of its restriction to
  $\bbL\mathcal{M}_{n}$ \IFF{} $F(x_{ij})$ is the limit over
  $F(x_{i'j'})$ with $(i',j') \in \bbL^{n}_{(i,j)/}$. Depicting the
  category $\bbL^{n}_{(i,j)/}$ as
  \[
    \begin{tikzcd}
      {} & (i,i+1) \arrow{dl} \arrow{dr} & & \cdots \arrow{dl}
      \arrow{dr} & & (j-1,j) \arrow{dl} \arrow{dr} \\
      (i,i) & & (i+1,i+1) & & (j-1,j-1) & & (j,j),
    \end{tikzcd}
  \]
  we see that the condition is that the map
  \[ F(x_{ij})\to F(x_{i,i+1}) \times_{F(x_{(i+1)(i+1)})} \cdots
    \times_{F(x_{(j-1)(j-1)})} F(x_{(j-1)j}) \]
  must be an equivalence. Inducting on $j-i$ and using that limits
  commute, we see that this condition holds for all $(i,j)$ \IFF{} the
  given commutative squares are all cartesian.
\end{proof}

\begin{defn}
  Let $\mathcal{M}$ be a double \icat{} and $\mathcal{X}$ a cocomplete
  \icat{} with pullbacks. Fix an active morphism
  $\phi \colon [m] \to [n]$ in $\simp$, an object $x_{0m} \in
  \mathcal{M}_{m}$, and an integer $0 \leq k \leq m$.
 Suppose given functors
  \[ F_{0k} \colon \mathcal{M}_{\phi(k)/x_{0k}} \to \mathcal{X},\]
  \[ F_{kk} \colon \mathcal{M}_{0/x_{kk}} \to \mathcal{X},\]
  \[ F_{km} \colon \mathcal{M}_{n-\phi(k)/x_{km}} \to \mathcal{X},\]
  where $x_{0m} \to x_{ij}$ is the cocartesian morphism over
  $\{i,\ldots,j\} \hookrightarrow [m]$ and
  $\mathcal{M}_{\phi(j)-\phi(i)/x_{ij}}$ is defined using the
  restriction of $\phi$ to an active
  morphism
  \[[j-i] \cong \{i,i+1,\ldots,j\} \to
    \{\phi(j),\phi(j)+1,\ldots,\phi(i)\} \cong [\phi(j)-\phi(i)],\]
  together with natural transformations
  $F_{0k} \to F_{kk}|_{\mathcal{M}_{\phi(k)/x_{0k}}}$,
  $F_{km} \to F_{kk}|_{\mathcal{M}_{n-\phi(k)/x_{km}}}$. Then define
  $F_{0m} \colon \mathcal{M}_{n/x_{0m}} \to \mathcal{X}$ as the fibre
  product $F_{0k} \times_{F_{kk}} F_{km}$ using these natural
  transformations and the equivalence
  $\mathcal{M}_{n/x_{0m}} \simeq \mathcal{M}_{\phi(k)/x_{0k}}
  \times_{\mathcal{M}_{0/x_{kk}}} \mathcal{M}_{n-\phi(k)/x_{km}}$. We
  then have an equivalence
    \[ \colim_{\mathcal{M}_{n/x_{0m}}} F_{0m} \simeq
    \colim_{\mathcal{M}_{\phi(k)/x_{0k}} \times_{\mathcal{M}_{0/x_{kk}}}
      \mathcal{M}_{n-\phi(k)/x_{km}}} F_{0k} \times_{F_{kk}} F_{km}.\]
  This induces a canonical distributivity morphism
  \[ \colim_{\mathcal{M}_{n/x_{0m}}} F|_{0m} \to
    \left(\colim_{\mathcal{M}_{\phi(k)/x_{0k}}}
      F_{0k} \right)
    \times_{\left(\colim_{\mathcal{M}_{0/x_{kk}}}
        F_{kk} \right)}
    \left(\colim_{\mathcal{M}_{n-\phi(k)/x_{km}}}
      F_{km}\right).
  \]
  We say $\mathcal{M}$ is \emph{$\mathcal{X}$-admissible} if this
  morphism is always an equivalence, \ie{} if colimits over these slices
  of $\mathcal{M}$  distribute over pullbacks.
\end{defn}

\begin{remark}
  Given a functor $F \colon \bbSM_{n} \to \mathcal{X}$ that
  is right Kan extended from $\bbL\mathcal{M}_{n}$,
  the condition of $\mathcal{X}$-admissibility implies an equivalence  
  \[ \colim_{\mathcal{M}_{n/x_{0m}}} F|_{\mathcal{M}_{n/x_{0m}}} \to
    \left(\colim_{\mathcal{M}_{\phi(k)/x_{0k}}}
      F|_{\mathcal{M}_{\phi(k)/x_{0k}}} \right)
    \times_{\left(\colim_{\mathcal{M}_{0/x_{kk}}}
        F|_{\mathcal{M}_{0/x_{kk}}} \right)}
    \left(\colim_{\mathcal{M}_{n-\phi(k)/x_{km}}}
      F|_{\mathcal{M}_{n-\phi(k)/x_{km}}}\right).
  \]
\end{remark}

\begin{ex}\label{ex:gpdadm}
  If $\mathcal{M}_{0}$ is an $\infty$-groupoid (in which case we may
  view $\mathcal{M}$ as an $(\infty,2)$-category, in the sense of a
  (not necessarily complete) 2-fold Segal space), then
  $(\mathcal{M}_{0})_{/x_{kk}} \simeq *$, so $\mathcal{M}$ is
  $\mathcal{X}$-admissible provided pullbacks in $\mathcal{X}$
  preserve colimits, \ie{} colimits in $\mathcal{X}$ are
  universal. In particular, $\mathcal{M}$ is $\mathcal{X}$-admissible
  for any $\infty$-topos $\mathcal{X}$.
\end{ex}

\begin{propn}\label{propn:dayconvdouble}
  Suppose $\mathcal{M}$ is an $\mathcal{X}$-admissible double
  \icat{}. Then $\widehat{\mathcal{M}}_{\mathcal{X}}$ is a double
  \icat{}.
\end{propn}
\begin{proof}
  Using the description of $\widehat{\mathcal{M}}_{\mathcal{X}}$ in
  \cref{lem:bbLM} we see that the condition of
  $\mathcal{X}$-admissibility is precisely set up so that for
  $F \in \widehat{\mathcal{M}}_{\mathcal{X},n}$ and
  $\phi \colon [m] \to [n]$ in $\simp$, the cocartesian morphism
  $F \to \phi_{!}F$ in $\widehat{\mathcal{M}}_{\mathcal{X}}^{+}$ lies
  in $\widehat{\mathcal{M}}_{\mathcal{X},m}$. Hence
  $\widehat{\mathcal{M}}_{\mathcal{X}} \to \Dop$ is a cocartesian
  fibration. It remains to check that
  $\widehat{\mathcal{M}}_{\mathcal{X}}$ is a double \icat{}, \ie{}
  that the functor
  \[ \widehat{\mathcal{M}}_{\mathcal{X},n} \to
    \widehat{\mathcal{M}}_{\mathcal{X},1}\times_{\widehat{\mathcal{M}}_{\mathcal{X},0}}
    \cdots \times_{\widehat{\mathcal{M}}_{\mathcal{X},0}}
    \widehat{\mathcal{M}}_{\mathcal{X},1}\]
  is an equivalence. From the definition of
  $\widehat{\mathcal{M}}_{\mathcal{X}}$ it is immediate that we may
  identify $\widehat{\mathcal{M}}_{\mathcal{X},n}$ with
  $\Fun(\bbL\mathcal{M}_{n}, \mathcal{X})$, where
  $\bbL\mathcal{M}_{n}$ is equivalent to the pullback $\mathcal{M}
  \times_{\Dop} \bbL^{n}$. Under this equivalence our functor is 
  given by composition with
  \[ \bbL\mathcal{M}_{1} \amalg_{\bbL\mathcal{M}_{0}} \cdots
    \amalg_{\bbL\mathcal{M}_{0}} \bbL\mathcal{M}_{1} \to
    \bbL\mathcal{M}_{n}.\]
  Since $\mathcal{M} \to \Dop$ is a cocartesian fibration, pullback
  along it preserves colimits, hence this functor is equivalent to
  \[ \mathcal{M} \times_{\Dop} (\bbL^{1} \amalg_{\bbL^{0}} \cdots
    \amalg_{\bbL^{0}} \bbL^{1}) \to \mathcal{M} \times_{\Dop}
    \bbL^{n},\] which is an equivalence by \cite{spans}*{Proposition
    5.13}.
\end{proof}

In the case where $\mathcal{X}$ is $\mathcal{S}$, we can give a more
explicit description of the \icat{}
$\widehat{\mathcal{M}}_{\mathcal{S}}(F,G)$ of horizontal morphisms
from $F$ to $G$, \ie{} the fibre of
$\widehat{\mathcal{M}}_{\mathcal{S},1} \to
\widehat{\mathcal{M}}_{\mathcal{S},0}^{\times 2}$ at $(F,G)$:
\begin{notation}
  Given $F,G \colon \mathcal{M}_{0} \to \mathcal{S}$ with
  corresponding left fibrations $\mathcal{F}, \mathcal{G} \to
  \mathcal{M}_{0}$, let 
  $\mathcal{M}_{1,F,G} \to \mathcal{M}_{1}$ be the left fibration
  defined by the pullback
  \[
    \begin{tikzcd}
      \mathcal{M}_{1,F,G} \arrow{r} \arrow{d} & \mathcal{F} \times
      \mathcal{G} \arrow{d} \\
      \mathcal{M}_{1} \arrow{r} & \mathcal{M}_{0} \times \mathcal{M}_{0}.
    \end{tikzcd}
  \]
  This left fibration corresponds to the functor
  \[ \mathcal{M}_{1} \to \mathcal{M}_{0}^{\times 2} \xto{F \times G} \mathcal{S}.\]
\end{notation}

\begin{lemma}\label{lem:bbSM1po}
  The \icat{} $\bbSM_{1}$ is equivalent to the pushout
  $(\mathcal{M}_{1} \times \bbS^{1}) \amalg_{\mathcal{M}_{1} \amalg
    \mathcal{M}_{1}} (\mathcal{M}_{0} \amalg \mathcal{M}_{0})$.
\end{lemma}
\begin{proof}
  By definition $\bbSM_{1}$ is the pullback $\bbS^{1} \times_{\Dop}
  \mathcal{M}$. The category $\bbS^{1}$ can be written as a pushout
  $\Delta^{1} \amalg_{\{0\}} \Delta^{1}$, and since pullbacks along
  cocartesian fibrations preserve 
  colimits we get a decomposition $\bbSM_{1} \simeq
  (\Delta^{1}\times_{\Dop} \mathcal{M}) \amalg_{\mathcal{M}_{1}}
  (\Delta^{1} \times_{\Dop} \mathcal{M})$. By \cite{freepres}*{Lemma
    3.8} (which summarizes results of \cite{HTT}*{\S 3.2.2}) for any
  cocartesian fibration $\mathcal{E} \to \Delta^{1}$ there is a
  pushout $\mathcal{E} \simeq \mathcal{E}_{0}\times \Delta^{1}
  \amalg_{\mathcal{E}_{0} \times\{1\}} \mathcal{E}_{1}$. Applying this
  we get an equivalence
  \[ \bbSM_{1} \simeq (\mathcal{M}_{1} \times \Delta^{1}
    \amalg_{\mathcal{M}_{1} \times \{1\}} \mathcal{M}_{0})
    \amalg_{\mathcal{M}_{1} \times \{0\}} (\mathcal{M}_{1} \times \Delta^{1}
    \amalg_{\mathcal{M}_{1} \times \{1\}} \mathcal{M}_{0}),\]
  which we can rewrite as the desired expression.
\end{proof}

\begin{propn}\label{propn:M1FG}
  Given $F,G \colon \mathcal{M}_{0} \to \mathcal{S}$, the \icat{}
  $\widehat{\mathcal{M}}_{\mathcal{S}}(F,G)$ is equivalent to the
  functor \icat{} 
  $\Fun(\mathcal{M}_{1,F,G}, \mathcal{S})$.
\end{propn}
\begin{proof}
  By \cref{lem:bbSM1po} we have a pullback square
  \[
    \begin{tikzcd}
      \Fun(\bbSM_{1}, \mathcal{S}) \arrow{r} \arrow{d} &
      \Fun(\mathcal{M}_{1} \times \bbS^{1}, \mathcal{S}) \arrow{d} \\
      \Fun(\mathcal{M}_{0}, \mathcal{S})^{\times 2} \arrow{r} &
      \Fun(\mathcal{M}_{1}, \mathcal{S})^{\times 2}.
    \end{tikzcd}
  \]
  Now since $\bbS^{1} \simeq \{0,1\}^{\triangleleft}$, for any \icat{}
  $\mathcal{C}$ we can
  identify the fibre of 
  \[ \Fun(\bbS^{1}, \mathcal{C}) \to \mathcal{C}^{\times 2}\]
  at $x,y$ with $\mathcal{C}_{/x,y} := \mathcal{C}_{/p}$ for the
  diagram $p \colon \{0,1\} \to \mathcal{C}$ picking out $x$ and $y$,
  and if $\mathcal{C}$ has products then $\mathcal{C}_{/x,y} \simeq
  \mathcal{C}_{/x \times y}$ by the universal property of the
  limit. We therefore have an equivalence between the fibre of $\Fun(\mathcal{M}_{1}
  \times \bbS^{1}, \mathcal{S}) \to \Fun(\mathcal{M}_{1},
  \mathcal{S})^{\times 2}$ at $(\alpha, \beta)$ and
  $\Fun(\mathcal{M}_{1}, \mathcal{S})_{/\alpha \times \beta}$.
  Now \cite{freepres}*{Proposition 9.7} describes this as
  $\Fun(\mathcal{E}, \mathcal{S})$ where $\mathcal{E} \to
  \mathcal{M}_{1}$ is the left fibration for the functor $\alpha
  \times \beta$. Together with the pullback square above, this
  identifies the fibre of $\Fun(\bbSM_{1}, \mathcal{S}) \to
  \Fun(\mathcal{M}_{0}, \mathcal{S})^{\times 2}$ at $F, G$ with
  $\Fun(\mathcal{M}_{1,F,G}, \mathcal{S})$, as required.  
\end{proof}

\begin{remark}\label{rmk:horizref}
  Let us reformulate the description of the horizontal composition
  from \cref{rmk:d1desc} in terms of our description of horizontal
  morphisms: Suppose $\Phi$ is a horizontal morphism from $F$ to $G$
  and $\Psi$ is a horizontal morphism from $G$ to $H$, so that we may
  view $\Phi$ as a functor $\mathcal{M}_{1,F,G} \to \mathcal{S}$ and
  $\Psi$ as a functor $\mathcal{M}_{1,G,H} \to \mathcal{S}$, then
  their composite is the functor $\mathcal{M}_{1,F,H} \to \mathcal{S}$
  given by
  \[ (x, p \in F(x_{00}), q \in H(x_{11})) \mapsto \colim_{y \in
      \mathcal{M}_{2/x}} \colim_{p' \in F(y_{00})_{p}} \colim_{q' \in
      H(y_{22})_{q}} \colim_{r \in H(y_{11})} \Phi(y_{01}, p', r)
    \times \Psi(y_{12}, r, q'),\] where $y_{ij}$ and $x_{ij}$ denote
  the cocartesian pushforwards of $y$ and $x$ along the inert
  inclusion of $[j-i]$ as $\{i,i+1,\ldots,j\}$.

  Note that if the \icat{} $\mathcal{M}_{0}$ is an \igpd{}, the formula above
  simplifies to
  \[ (x, p \in F(x_{00}), q \in H(x_{11})) \mapsto \colim_{y \in
      \mathcal{M}_{2/x}} \colim_{r \in H(y_{11})} \Phi(y_{01}, p, r)
    \times \Psi(y_{12}, r, q).\]
\end{remark}

\begin{remark}\label{rmk:ncomp}
  More generally, we can write the composition of $n$ horizontal
  morphisms
  \[ \Phi_{i} \colon \mathcal{M}_{1,F_{i-1},F_{i}} \to
  \mathcal{S}, \qquad i = 1,\ldots,n,\] as the functor $\mathcal{M}_{1,F_{0},F_{n}} \to
  \mathcal{S}$ given by
  \[ (x, p \in F_{0}(x_{00}), q \in F_{n}(x_{11})) \mapsto \colim_{y \in
      \mathcal{M}_{n/x}} \colim_{(t_{0},\ldots,t_{n}) \in F(y)_{p,q}
    }
    \Phi_{1}(y_{01},t_{0},t_{1}) \times \cdots \times
    \Phi_{n}(y_{(n-1)n}, t_{n-1},t_{n}),\]
  where $F(y)_{p,q} := F(y_{00})_{p} \times F(y_{11}) \times \cdots \times
  F(y_{(n-1)(n-1)}) \times F(y_{nn})_{q}$. 
\end{remark}

\begin{remark}
  Let $\widehat{\mathcal{M}}^{\otimes}_{\mathcal{X},*}$ denote the full
  subcategory of $\widehat{\mathcal{M}}_{\mathcal{X}}$
  spanned by functors $F \colon \bbSM_{n} \to \mathcal{X}$ such that for all
  maps $[0] \to [n]$ in $\simp$ the composite $\bbSM_{0} \to \bbSM_{n}
  \to \mathcal{X}$ is constant at the terminal object of
  $\mathcal{X}$. For such $F$ we have an equivalence
    \[ \colim_{\mathcal{M}_{n/x_{0m}}} F|_{\mathcal{M}_{n/x_{0m}}} \simeq
    \colim_{\mathcal{M}_{\phi(k)/x_{0k}} \times_{\mathcal{M}_{0/x_{kk}}}
      \mathcal{M}_{n-\phi(k)/x_{km}}} F|_{\mathcal{M}_{\phi(k)/x_{0k}}}
    \times F|_{\mathcal{M}_{n-\phi(k)/x_{km}}},\]
  giving a canonical morphism
  \[ \colim_{\mathcal{M}_{n/x_{0m}}} F|_{\mathcal{M}_{n/x_{0m}}} \to
    \left(\colim_{\mathcal{M}_{\phi(k)/x_{0k}}}
      F|_{\mathcal{M}_{\phi(k)/x_{0k}}}\right)
    \times
    \left(\colim_{\mathcal{M}_{n-\phi(k)/x_{km}}}
      F|_{\mathcal{M}_{n-\phi(k)/x_{km}}}\right).
  \]
  $\widehat{\mathcal{M}}_{\mathcal{X},*}^{\otimes}$
  is a monoidal \icat{} under the weaker hypothesis that this morphism
  is an
  equivalence. This holds, in particular, if $\mathcal{X}$ has
  finite products that commute with colimits in each variable, and the
  functors
  \[\mathcal{M}_{n/x_{0m}} \to  \mathcal{M}_{\phi(k)/x_{0k}} \times
    \mathcal{M}_{n-\phi(k)/x_{km}}\] 
  are cofinal.
\end{remark}

\begin{remark}\label{rmk:Dayconvftr}
  From \cref{rmk:unfftr} we see that any morphism of double \icats{}
  $\mu \colon \mathcal{N} \to \mathcal{M}$ (\ie{} a functor over $\Dop$ that
  preserves cocartesian morphisms) induces a functor
  $\widehat{\mu} \colon \widehat{\mathcal{N}}^{+}_{\mathcal{X}} \to
  \widehat{\mathcal{M}}_{\mathcal{X}}^{+}$ (given by taking left Kan extensions). However, even if both
  $\mathcal{N}$ and $\mathcal{M}$ are $\mathcal{X}$-admissible this
  does not necessarily restrict to a functor
  $\widehat{\mathcal{N}}_{\mathcal{X}} \to
  \widehat{\mathcal{M}}_{\mathcal{X}}$. Using \cref{lem:cocartlke} we
  see that this happens precisely when for every $x \in
  \mathcal{M}_{2} \simeq \mathcal{M}_{1} \times_{\mathcal{M}_{0}}
  \mathcal{M}_{1}$ the natural distributivity morphism
\[ \colim_{\mathcal{N}_{1/x_{01}} 
    \times_{\mathcal{N}_{0/x_{11}}}
    \mathcal{N}_{1/x_{12}}} F_{01} \times_{F_{11}} F_{12} \to
  \colim_{\mathcal{N}_{1/x_{01}}} F_{01} 
  \times_{\colim_{\mathcal{N}_{0/x_{11}}} F_{11}}
  \colim_{\mathcal{N}_{1/x_{12}}} F_{12}\]
 is an equivalence for all functors $F \colon \mathcal{N}_{2} \to
 \mathcal{S}$ in $\widehat{\mathcal{N}}_{\mathcal{X}}$. This happens,
 for instance, if $\mathcal{N}_{0/x_{11}}$ is a contractible
 $\infty$-groupoid and colimits in $\mathcal{X}$ are universal, or if
 $\mathcal{X}$ is $\mathcal{S}$ and
 all \icats{} of the form $\mathcal{N}_{0/x_{11}}$ and
 $\mathcal{N}_{1/x_{01}}$ admit cofinal functors from \igpds{} (since
 colimits indexed by spaces distribute over limits by
 \cite{patterns1}*{Corollary 7.17}).
\end{remark}

\subsection{The Universal Property}\label{sec:dayunivp}
Suppose $\mathcal{M}$ is an $\mathcal{X}$-admissible double \icat{}
and $\mathcal{O}$ is a generalized non-symmetric \iopd{}. Our goal in
this subsection is to show that there is a natural equivalence
\[ \Alg_{\mathcal{O}}(\widehat{\mathcal{M}}_{\mathcal{X}}) \simeq
  \Seg_{\mathcal{O} \times_{\Dop} \mathcal{M}}(\mathcal{X}). \]

\begin{remark}
  We already know from \cref{cor:EXuniv} that
  $\widehat{\mathcal{M}}_{\mathcal{X}}^{+} \to \Dop$ has the universal
  property that there is a natural equivalence
  \[ \Map_{/\Dop}(\mathcal{I},
    \widehat{\mathcal{M}}_{\mathcal{X}}^{+}) \simeq \Map(\mathcal{I}
    \times_{\Dop} \bbSM, \mathcal{X}).\] Our first goal is to reduce
  the right-hand side to functors from
  $\mathcal{I} \times_{\Dop} \mathcal{M}$ by a further localization.
\end{remark}

\begin{remark}
  For $\mathcal{M} := \Dop$, the universal property we want was proved
  as \cite{spanalg}*{Corollary 3.11}, and our proof will build on the
  constructions made there. 
\end{remark}

\begin{notation}
  We recall some notation from \cite{spanalg}:
  \begin{itemize}
  \item The functor $\Pi \colon \hbbS \to \Dop$ has a section $\Psi
    \colon \Dop \to \hbbS$, taking $[n]$ to $([n], (0,n))$; there is
    also a natural transformation $\eta \colon \id_{\hbbS} \to
    \Psi\Pi$ given at $([n], (i,j))$ by the map $([n], (i,j)) \to
    ([j-i], (0,j-i))$ lying over the inert map $\rho_{ij} \colon
    [j-i] \to [n]$. Note that $\Pi \eta \simeq \id_{\Pi}$.
  \item We also have $p\Psi \cong \id_{\Dop}$, where $p$ is the
    Cartesian fibration $\hbbS \to \Dop$, and a natural transformation
    $\epsilon \colon \Psi p \to \id_{\hbbS}$ given at $([n], (i,j))$
    by the natural maps $([n], (0,n)) \to ([n], (i,j))$.
  \item We let $I$ denote the set of cartesian morphisms in $\hbbS$ that
    lie over inert morphisms in $\Dop$.
  \end{itemize}
\end{notation}

The functor $\Pi$ exhibits $\Dop$ as the localization of $\hbbS$ at $I$ by
\cite{spanalg}*{Proposition 3.8}. We can extend this as follows:
\begin{propn}
  The projection $\Pi_{\mathcal{M}} \colon \bbSM \to \mathcal{M}$ exhibits $\mathcal{M}$ as
  the localization of $\bbSM$ at the set $I_{\mathcal{M}}$ of cocartesian
  morphisms that lie over $I$.
\end{propn}
\begin{proof}
  Let $W$ be the class of morphisms in $\hbbS$ that are mapped to
  isomorphisms (\ie{} identities) by $\Pi$ and let $W_{\mathcal{M}}$
  be the class of morphisms in $\bbSM$ that are mapped to equivalences
  by $\Pi_{\mathcal{M}}$; then $W_{\mathcal{M}}$ is precisely the
  class of cocartesian morphisms over $W$.

  The section $\Psi$ pulls back to a section
  $\Psi_{\mathcal{M}} \colon \mathcal{M} \to \bbSM$, and since $\Pi
  \eta \simeq \id_{\Pi}$ the transformation $\eta$ pulls back to a
  natural transformation $\eta_{\mathcal{M}}\colon \id_{\bbSM} \to
  \Psi_{\mathcal{M}}\Pi_{\mathcal{M}}$. Then $\eta_{\mathcal{M}}$ is
  componentwise given by cocartesian morphisms in $\bbSM$ that lie
  over morphisms in $W$, and so this data becomes an equivalence of
  \icats{} after localizing at $W$. In particular,
   $\Pi_{\mathcal{M}}$ exhibits $\mathcal{M}$ as the
  localization of $\bbSM$ at $W_{\mathcal{M}}$. It thus only remains
  to see that the localization at $I_{\mathcal{M}}$ is the same as the
  localization at $W_{\mathcal{M}}$. Since the morphisms involved are
  cocartesian, this follows from the same 2-of-3 argument as in the
  proof of \cite{spanalg}*{Proposition 3.8}.  
\end{proof}

\begin{propn}\label{propn:inertloc}
  Suppose $f \colon \mathcal{I} \to \Dop$ is any functor such that
  $\mathcal{I}$ has $f$-cocartesian morphisms over inert maps in
  $\Dop$. Then there is a functor
  $\overline{\Pi} \colon \mathcal{I} \times_{\Dop} \bbSM \to
  \mathcal{I} \times_{\Dop} \mathcal{M}$ lying over $\Pi$, which
  exhibits $\mathcal{I} \times_{\Dop} \mathcal{M}$ as the localization
  at the set $I_{\mathcal{I},\mathcal{M}}$ of morphisms whose image in
  $\Dop$ is inert, whose image in $\mathcal{I}$ is cocartesian, and
  whose image in $\hbbS$ is cartesian (and whose image in
  $\mathcal{M}$ is therefore an equivalence).
\end{propn}
\begin{proof}
  As the proof of \cite{spanalg}*{Proposition 3.9}, using the lifts
  defined in the proof of the previous proposition.
\end{proof}

\begin{cor}\label{cor:M+eq}
  Suppose $f \colon \mathcal{I} \to \Dop$ is any functor such that
  $\mathcal{I}$ has $f$-cocartesian morphisms over inert maps in
  $\Dop$. Then there is an equivalence
  \[ \Map_{/\Dop}^{\txt{int}}(\mathcal{I},
    \widehat{\mathcal{M}}^{+}_{\mathcal{X}}) 
    \simeq \Map(\mathcal{I} \times_{\Dop} \mathcal{M}, \mathcal{X}),\]
  natural in $\mathcal{I}$, where $\Map_{/\Dop}^{\txt{int}}(\mathcal{I},
  \widehat{\mathcal{M}}^{+}_{\mathcal{X}})$ denotes the subspace of
  $\Map_{/\Dop}(\mathcal{I},
  \widehat{\mathcal{M}}^{+}_{\mathcal{X}})$ consisting of functors
  that preserve the cocartesian morphisms over inert maps in $\Dop$.
\end{cor}
\begin{proof}
  Using \cref{propn:inertloc} we may identify
  $\Map(\mathcal{I} \times_{\Dop} \mathcal{M}, \mathcal{X})$ with the
  subspace of
  $\Map(\mathcal{I} \times_{\Dop}\bbSM, \mathcal{X}) \simeq
  \Map_{/\Dop}(\mathcal{I}, \widehat{\mathcal{M}}^{+}_{\mathcal{X}})$
  consisting of functors that take morphisms in
  $I_{\mathcal{I},\mathcal{M}}$ to equivalences. Unwinding the
  definitions, we see that (as a cocartesian morphism in
  $\widehat{\mathcal{M}}^{+}_{\mathcal{X}}$ over an inert morphism
  does not involve a left Kan extension) these precisely correspond to
  the functors that preserve cocartesian morphisms over inert
  morphisms in $\Dop$.
\end{proof}

Replacing $\mathcal{I}$ by $\mathcal{I} \times \Delta^{\bullet}$ this induces an equivalence
\[ \Fun^{\txt{int}}_{/\Dop}(\mathcal{I},
  \widehat{\mathcal{M}}^{+}_{\mathcal{X}}) \simeq \Fun(\mathcal{I}
  \times_{\Dop} \mathcal{M}, \mathcal{X}).\]
Restricting further to functors with value in
$\widehat{\mathcal{M}}_{\mathcal{X}}$, we get:
\begin{cor}\label{cor:Meq}
  Suppose $\mathcal{O}$ is a \gnsiopd{}. Then there is an equivalence
  \[ \Fun_{/\Dop}^{\txt{int}}(\mathcal{O},
    \widehat{\mathcal{M}}_{\mathcal{X}}) 
    \simeq \Seg_{\mathcal{O} \times_{\Dop} \mathcal{M}}(\mathcal{X}),\]
  natural in $\mathcal{O}$. \qed
\end{cor}

\subsection{Day Convolution Monoidal Structures}\label{sec:daymon}
Our goal in this subsection is to show that
$\widehat{\mathcal{M}}_{\mathcal{X}}$ induces a family of monoidal
\icats{}, and in some cases (including for associative algebras)
algebras in $\widehat{\mathcal{M}}_{\mathcal{X}}$ are algebras in
these monoidal \icats{}. This will follow from a general observation
about \emph{framed} double \icats{}, which we will consider after
some simple observations about algebras in double \icats{}:
\begin{defn}
  For any \icat{} $\mathcal{C}$, let $\Dop_{\mathcal{C}} \to \Dop$ be
  the terminal double \icat{} with $\mathcal{C}$ as its fibre at
  $[0]$. This is defined as the cocartesian fibration for the functor
  $\Dop \to \CatI$ obtained as the right Kan extension along
  $\{[0]\} \hookrightarrow \Dop$ of the functor $\{[0]\} \to \CatI$
  with value $\mathcal{C}$. Thus
  $(\Dop_{\mathcal{C}})_{n} \simeq \mathcal{C}^{\times (n+1)}$ with
  cocartesian morphisms over face maps given by projections and those
  over degeneracies given by diagonals. Note that there is in
  particular a canonical functor
  $\mathcal{C} \times \Dop \to \Dop_{\mathcal{C}}$, given fibrewise by
  the diagonal $\mathcal{C} \to \mathcal{C}^{\times (n+1)}$.
\end{defn}

\begin{defn}
  Given a double \icat{} $\mathcal{M}$, define $\mathcal{M}^{\otimes}$
  as the pullback
  \[
    \begin{tikzcd}
      \mathcal{M}^{\otimes} \arrow{r} \arrow{d} & \mathcal{M}
      \arrow{d} \\
      \mathcal{M}_{0} \times \Dop \arrow{r} & \Dop_{\mathcal{M}_{0}}.
    \end{tikzcd}
  \]
  This is a pullback of cocartesian fibrations over $\Dop$ along
  functors that preserve cocartesian morphisms, hence
  $\mathcal{M}^{\otimes} \to \Dop$ is again a cocartesian fibration.
\end{defn}

\begin{propn}
  Suppose $\mathcal{O}$ is a \gnsiopd{} such that
  the inclusion
  $\mathcal{O}_{0} \to \mathcal{O}$ induces an equivalence
  $\mathcal{O}_{0} \isoto \mathcal{O}[I^{-1}]$ where $I$ is the class
  of inert morphisms in $\mathcal{O}$. Let $\mathcal{M}$
  be a double \icat{}. Then
  \[ \Alg_{\mathcal{O}}(\mathcal{M}^{\otimes}) \to
    \Alg_{\mathcal{O}}(\mathcal{M}) \]
  is an equivalence.
\end{propn}
\begin{proof}
  The double \icat{} $\mathcal{M}^{\otimes}$ is defined by a pullback
  square of generalized non-symmetric \iopds{}, so we have a pullback
  square
  \[
    \nlcsquare{\Alg_{\mathcal{O}}(\mathcal{M}^{\otimes})}{\Alg_{\mathcal{O}}(\mathcal{M})}{\Alg_{\mathcal{O}}(\mathcal{M}_{0}
      \times \Dop)}{\Alg_{\mathcal{O}}(\Dop_{\mathcal{M}_{0}}).}
  \]
  It therefore suffices to show that
  $\Alg_{\mathcal{O}}(\mathcal{M}_{0} \times \Dop) \to
  \Alg_{\mathcal{O}}(\Dop_{\mathcal{M}_{0}})$ is an equivalence if
  $\mathcal{O}$ satisfies the assumptions.

  By definition $\Alg_{\mathcal{O}}(\Dop_{\mathcal{M}_{0}})$ is
  equivalent to $\Fun(\mathcal{O}_{0}, \mathcal{M}_{0})$, while we may
  identify $\Alg_{\mathcal{O}}(\mathcal{M}_{0} \times \Dop)$ with the
  \icat{} of functors $\mathcal{O} \to \mathcal{M}_{0}$ that take
  inert morphisms to equivalences. We therefore have an equivalence if
  $\mathcal{O}_{0} \to \mathcal{O}[I^{-1}]$ is an equivalence.
\end{proof}
\begin{cor}
  Let $\mathcal{M}$
  be a double \icat{} and $\mathcal{O}$ a \nsiopd{} such that the \icat{}
  $\mathcal{O}$ is weakly contractible. Then
  \[ \Alg_{\mathcal{O}}(\mathcal{M}^{\otimes}) \to
    \Alg_{\mathcal{O}}(\mathcal{M}) \]
  is an equivalence.
\end{cor}
\begin{proof}
  We must show that $\mathcal{O}[I^{-1}]$ is contractible, where $I$
  is the class of inert morphisms. Since $\mathcal{O}$ is
  weakly contractible, it suffices to check that inverting the inert
  morphisms in $\mathcal{O}$ amounts to inverting all morphisms. To
  see this we first observe that for any map $\phi \colon [n] \to [m]$ in $\simp$, we
  have a commutative triangle
  \[
    \begin{tikzcd}
      {} & {[0]} \arrow{dl} \arrow{dr} \\
      {[n]} \arrow{rr}{\phi} & & {[m]}
    \end{tikzcd}
  \]
  where the maps from $[0]$ are inert. Given a morphism $X \to Y$ in
  $\mathcal{O}$ we therefore have a commutative triangle
  \[
    \begin{tikzcd}
      X \arrow{dr} \arrow{rr} & & Y \arrow{dl} \\ 
       & (),
    \end{tikzcd}
  \]
  where $()$ denotes the unique object of $\mathcal{O}_{0}$ and the
  diagonal morphisms are cocartesian morphisms over the (inert)
  morphisms from $[0]$ in the first triangle.
\end{proof}

Since $\Dop$ is weakly contractible, as a special case we have:
\begin{cor}
  Let $\mathcal{M}$
  be a double \icat{}. Then
  \[ \Alg_{\Dop}(\mathcal{M}^{\otimes}) \to
    \Alg_{\Dop}(\mathcal{M}) \]
  is an equivalence. \qed
\end{cor}

\begin{defn}
  A double \icat{} $\mathcal{M}$ is \emph{framed} if the functor
  $(d_{1,!},d_{0,!}) \colon \mathcal{M}_{1} \to
  \mathcal{M}_{0}^{\times 2}$ is a cartesian fibration.
\end{defn}
\begin{remark}
  By \cite{polynomial}*{Proposition A.4.4}, $(d_{1,!},d_{0,!})$ is a
  cartesian fibration \IFF{} it is a cocartesian fibration, and this
  is also equivalent to the existence of ``companions and conjoints''
  in (the homotopy double category of) $\mathcal{M}$.
\end{remark}

\begin{propn}\label{propn:Motimesmonoidal}
  Suppose $\mathcal{M}$ is a framed double \icat{}. Then
  $\mathcal{M}^{\otimes} \to \mathcal{M}_{0}$ is a cartesian
  fibration, and corresponds to a functor from $\mathcal{M}_{0}$ to
  monoidal \icats{} and lax monoidal functors.
\end{propn}
\begin{proof}
  We apply the dual of \cite{cois}*{Lemma A.1.10} to
  $\mathcal{M}^{\otimes} \to \mathcal{M}_{0} \times \Dop$ to conclude
  that $\mathcal{M}^{\otimes}\to \mathcal{M}_{0}$ is a cartesian
  fibration. We know that $\mathcal{M}^{\otimes} \to \Dop$ is a
  cocartesian fibration, so it remain to check that for $[n] \in \Dop$
  the functor $\mathcal{M}^{\otimes}_{n} \to \mathcal{M}_{0}$ is a
  cartesian fibration. This functor lives in a pullback square
  \[
    \begin{tikzcd}
      \mathcal{M}^{\otimes}_{n} \arrow{r} \arrow{d} & \mathcal{M}_{n}
      \arrow{d} \\
      \mathcal{M}_{0} \arrow{r} & \mathcal{M}_{0}^{\times (n+1)}.
    \end{tikzcd}
  \]
  Here the right vertical map is equivalent to the iterated fibre product
  \[ \mathcal{M}_{1} \times_{\mathcal{M}_{0}} \cdots
    \times_{\mathcal{M}_{0}} \mathcal{M}_{1} \to
    (\mathcal{M}_{0}^{\times 2}) \times_{\mathcal{M}_{0}} \cdots
    \times_{\mathcal{M}_{0}} (\mathcal{M}_{0}^{\times 2}),\]
  and so is a cartesian fibration since $\mathcal{M}$ is framed.
  It follows that $\mathcal{M}^{\otimes}_{n} \to \mathcal{M}_{0}$ is a
  cartesian fibration, and hence we can conclude that
  $\mathcal{M}^{\otimes} \to \mathcal{M}_{0}$ is a cartesian
  fibration and that cartesian morphisms lie over equivalences in $\Dop$.

  The fibre $\mathcal{M}^{\otimes}_{X} \to \Dop$ at $X \in
  \mathcal{M}_{0}$ is a monoidal \icat{}, so it remains to check that
  the functor $f^{*} \colon \mathcal{M}^{\otimes}_{Y} \to
  \mathcal{M}^{\otimes}_{X}$ corresponding to the cartesian morphisms
  over $f \colon X \to Y$ in $\mathcal{M}_{0}$ is lax monoidal, \ie{}
  preserves those cocartesian morphisms that lie over inert morphisms
  in $\Dop$.

  Since the cartesian morphisms lie over identities in $\Dop$, it is
  equivalent to check that for every inert morphism $\phi \colon [m]
  \to [n]$, the functor $\mathcal{M}^{\otimes}_{n} \to
  \mathcal{M}^{\otimes}_{m}$, given by cocartesian pushforward along
  $\phi$, preserves cartesian morphisms over $\mathcal{M}_{0}$. To see
  this it suffices to consider the outer face maps $[n-1] \to [n]$ (as
  any inert morphism is a composite of these). In this case we have a
  commutative diagram
  \[
    \begin{tikzcd}
      \mathcal{M}^{\otimes}_{n} \arrow{rr} \arrow{dr} \arrow{dd} & &
      \mathcal{M}_{n} \arrow{rr} \arrow{dr} \arrow{dd} & & \mathcal{M}_{1}
      \arrow{dr} \arrow{dd} \\
      & \mathcal{M}^{\otimes}_{n-1} \arrow{rr}  \arrow{dd} & &
      \mathcal{M}_{n-1} \arrow{rr} \arrow{dd} & & \mathcal{M}_{0}
      \arrow[equals]{dd} \\
      \mathcal{M}_{0}\arrow{rr} \arrow[equals]{dr} & &
      \mathcal{M}_{0}^{n+1} \arrow{rr} \arrow{dr} & &
      \mathcal{M}_{0}^{2} \arrow{dr} \\
      & \mathcal{M}_{0} \arrow{rr} & & \mathcal{M}_{0}^{n} \arrow{rr}
      & & \mathcal{M}_{0}.
    \end{tikzcd}
  \]
  Here all the vertical morphisms are cartesian fibrations, and in the
  left-hand cube the front and back faces are cartesian. It
  therefore suffices to show that
  $\mathcal{M}_{n}\to \mathcal{M}_{n-1}$ takes cartesian morphisms
  over $\mathcal{M}_{0}^{n+1}$ to cartesian morphisms over
  $\mathcal{M}_{0}^{n}$.
  
  In the right-hand cube the top and bottom faces are cartesian, \ie{}
  $\mathcal{M}_{n} \to \mathcal{M}_{0}^{n+1}$ is a fibre product of
  cartesian fibrations, and both morphisms to $\mathcal{M}_{0}$
  preserve cartesian morphisms (since all morphisms are cartesian for
  the identity functor). A morphism in $\mathcal{M}_{n}$ is hence
  cartesian \IFF{} its images in both $\mathcal{M}_{n-1}$ and
  $\mathcal{M}_{1}$ are cartesian, and in particular the functor to
  $\mathcal{M}_{n-1}$ preserves cartesian morphisms, as required.
\end{proof}

\begin{remark}
  The underlying \icat{} of the monoidal \icat{}
  $\mathcal{M}^{\otimes}_{X}$ is the \icat{} $\mathcal{M}(X,X)$ of
  horizontal endomorphisms of $X$, and the monoidal structure is
  given by horizontal composition. Since $\mathcal{M}$ is framed, a
  vertical morphism $f \colon X \to Y$ gives rise to two horizontal
  morphisms, say $f^{\triangleright} := (f,\id_{Y})^{*}\bbone_{Y}$
  from $X$ to $Y$ and $f^{\triangleleft} := (\id_{Y},f)^{*}\bbone_{Y}$ from $Y$ to $X$,
  and $f^{\triangleright}$ is left adjoint to $f^{\triangleleft}$. The underlying functor of
  the lax monoidal functor $\mathcal{M}^{\otimes}_{Y} \to
  \mathcal{M}^{\otimes}_{X}$ is the functor
  $\mathcal{M}(Y,Y) \to \mathcal{M}(X,X)$ given by
  $\Phi \mapsto f^{\triangleright} \odot_{Y} \Phi
  \odot_{Y}f^{\triangleleft}$, where we use $\odot_{Y}$ for horizontal
  composition over $Y$ as in \cref{not:odotcomp}. The lax monoidal
  structure comes from the unit transformation (note the non-standard
  order of composition)
  \[ f^{\triangleright} \odot_{Y} \Phi \odot_{Y}f^{\triangleleft} \odot_{X} f^{\triangleright} \odot_{Y}
    \Psi \odot_{Y}f^{\triangleleft} \to f^{\triangleright} \odot_{Y} \Phi \odot_{Y}
    \Psi \odot_{Y}f^{\triangleleft}.\]
\end{remark}

\begin{cor}\label{cor:framedalgfib}
  Suppose  $\mathcal{M}$
  is a framed double \icat{} and $\mathcal{O}$ is a \nsiopd{} such that
  the \icat{} $\mathcal{O}$ is weakly contractible.
  Then the restriction functor
  \[ \Alg_{\mathcal{O}}(\mathcal{M}) \to  \Fun(\mathcal{O}_{0},
    \mathcal{M}_{0}) \simeq \mathcal{M}_{0}\]
  is a cartesian fibration, corresponding to the functor
  $\mathcal{M}_{0}^{\op} \to \CatI$ taking $X \in \mathcal{M}_{0}$ to
  $\Alg_{\mathcal{O}}(\mathcal{M}^{\otimes}_{X})$.
\end{cor}
\begin{proof}
  Observe that we have a commutative diagram
  \[
    \begin{tikzcd}
      \Fun_{/\Dop}(\mathcal{O}, \mathcal{M}^{\otimes}) \arrow{r}
      \arrow{d} &
      \Fun(\mathcal{O}, \mathcal{M}^{\otimes})
       \arrow{d} \\
      \Fun(\mathcal{O}, \mathcal{M}_{0}) \arrow{r} \arrow{d} & \Fun(\mathcal{O},
      \mathcal{M}_{0} \times \Dop) \arrow{d} \\
      * \arrow{r} & \Fun(\mathcal{O}, \Dop),
    \end{tikzcd}
  \]
  where the bottom and outer squares are clearly cartesian. Hence the
  top square is also cartesian, and here \cite{HTT}*{Proposition
    3.1.2.1} implies that all but the top left vertex
  are cartesian fibrations over
  $\Fun(\mathcal{O}, \mathcal{M}_{0})$ and the morphisms to
  $\Fun(\mathcal{O}, \mathcal{M}_{0} \times \Dop)$
  preserve cartesian morphisms. Hence $\Fun_{/\Dop}(\mathcal{O},
  \mathcal{M}^{\otimes}) \to \Fun(\mathcal{O}, \mathcal{M}_{0})$ is a
  cartesian fibration.

  Since $\mathcal{O}$ is weakly contractible, the inclusion
  $\mathcal{M}_{0}\to \Fun(\mathcal{O}, \mathcal{M}_{0})$ of the
  constant functors is a full subcategory. Let
  $\Fun_{/\Dop}'(\mathcal{O}, \mathcal{M}^{\otimes}) \to
  \mathcal{M}_{0}$ denote the pullback of $\Fun_{/\Dop}(\mathcal{O},
  \mathcal{M}^{\otimes})$ along this inclusion; then
  $\Fun_{/\Dop}'(\mathcal{O}, \mathcal{M}^{\otimes})$ is a full
  subcategory of $\Fun_{/\Dop}(\mathcal{O}, \mathcal{M}^{\otimes})$
  that contains $\Alg_{\mathcal{O}}(\mathcal{M}^{\otimes}$. The
  projection $\Fun_{/\Dop}'(\mathcal{O}, \mathcal{M}^{\otimes}) \to
  \mathcal{M}_{0}$ is a cartesian fibration, so to show that
  $\Alg_{\mathcal{O}}(\mathcal{M}^{\otimes}) \to \mathcal{M}_{0}$ is a
  cartesian fibration it suffices to check that for every cartesian
  morphism in $\Fun_{/\Dop}'(\mathcal{O}, \mathcal{M}^{\otimes})$ whose
  target is an $\mathcal{O}$-algebra, its source is also an
  $\mathcal{O}$-algebra; this follows from
  \cref{propn:Motimesmonoidal}, which shows that cartesian morphisms
  preserve cocartesian morphisms over inert maps in $\Dop$. Since
  $\Alg_{\mathcal{O}}(\blank)$ preserves pullbacks, the fibre at $x
  \in \mathcal{M}_{0}$ can be identified with
  $\Alg_{\mathcal{O}}(\mathcal{M}_{x}^{\otimes})$, and from the
  description of the cartesian morphisms in \cite{HTT}*{Proposition
    3.1.2.1} it follows that for $f \colon y \to x$ in
  $\mathcal{M}_{0}$ the functor
$\Alg_{\mathcal{O}}(\mathcal{M}_{x}^{\otimes}) \to
\Alg_{\mathcal{O}}(\mathcal{M}_{y}^{\otimes})$ is given by composition
with the lax monoidal functor $\mathcal{M}_{x}^{\otimes} \to
\mathcal{M}_{y}^{\otimes}$ arising from the cartesian morphisms over
$f$ in $\mathcal{M}^{\otimes}$.
\end{proof}

\begin{lemma}
  If $\mathcal{M}$ is an $\mathcal{X}$-admissible double \icat{}
  where $\mathcal{X}$ is cocomplete, then
  $\widehat{\mathcal{M}}_{\mathcal{X}}$ is framed.
\end{lemma}
\begin{proof}
  It suffices to show that the source-and-target projection
  $\widehat{\mathcal{M}}_{\mathcal{X},1} \to
  \widehat{\mathcal{M}}_{\mathcal{X},0}^{\times 2}$ is a cocartesian
  fibration. This is the functor
  \[ i^{*} \colon \Fun(\bbSM_{1}, \mathcal{X}) \to \Fun(\mathcal{M}_{0},
    \mathcal{X})^{\times 2}\]
  given by composition with the inclusion $i \colon \mathcal{M}_{0} \amalg \mathcal{M}_{0}
  \to \mathcal{M} \times_{\Dop} \bbS^{1}$. To see that this is a
  cocartesian fibration we use the criterion of
  \cite{nmorita}*{Corollary 4.52}.

  First observe that $i^{*}$ has a left adjoint $i_{!}$,
  given by left Kan extension along $i$. Note that for $X \in
  \mathcal{M}_{1} \subseteq \bbSM_{1}$ the \icat{} $(\mathcal{M}_{0}
  \amalg \mathcal{M}_{0})_{/X}$ is empty, and so for $F \colon
  \mathcal{M}_{0}\amalg \mathcal{M}_{0} \to \mathcal{X}$ we have
  $i_{!}F(X) \simeq \emptyset$. Given $G \colon \bbSM_{1}\to
  \mathcal{X}$ and $\phi \colon i^{*}G \to F$ consider the pushout
  square
  \[
    \begin{tikzcd}
      i_{!}i^{*}G \arrow{d} \arrow{r} & i_{!}F \arrow{d} \\
      G \arrow{r} & G'.
    \end{tikzcd}
  \]
  Since pushouts in $\Fun(\bbSM_{1}, \mathcal{X})$ are computed
  pointwise, we see that $F \isoto i^{*}i_{!}F \isoto i^{*}G'$ is an
  equivalence, which is what we need in order to apply
  \cite{nmorita}*{Corollary 4.52} to conclude that $i^{*}$ is a
  cocartesian fibration (with $G \to G'$ being the cocartesian
  morphism over $i^{*}G \to F$).
\end{proof}

\begin{remark}\label{rmk:framedcart}
  Suppose $\mathcal{M}$ is an $\mathcal{S}$-admissible double
  \icat{}. Given natural transformations $\phi \colon F \to F'$,
  $\gamma \colon G \to G'$ of
  functors $\mathcal{M}_{0} \to \mathcal{S}$ and $\Phi \colon
  \mathcal{M}_{1,F,G} \to \mathcal{S}$, the description of the
  cocartesian pushforward of
  $\Phi$ along $(\phi,\gamma)$ above amounts to this being given by
  the left Kan extension along the induced functor
  $\mathcal{M}_{1,F,G} \to \mathcal{M}_{1,F',G'}$, \ie{}
  \[ (\phi,\gamma)_{!}\Phi(x,p',q') \simeq \colim_{p \in
      F(x_{00})_{p'}, q \in G(x_{11})_{q'}} \Phi(x,p,q).\]
  It follows that the cartesian pullback of $\Psi \colon
  \mathcal{M}_{1,F',G'}\to \mathcal{S}$ is given by composition with
  this functor.
\end{remark}

Applying \cref{cor:framedalgfib} to
$\widehat{\mathcal{M}}_{\mathcal{X}}$, we now get:
\begin{cor}\label{cor:dayconvmonalg}
  Let $\mathcal{O}$ be a non-symmetric \iopd{} such that the \icat{}
  $\mathcal{O}$ is weakly contractible and $\mathcal{M}$ an
  $\mathcal{X}$-admissible double \icat{} where $\mathcal{X}$ is
  cocomplete. Then
  \[\Alg_{\mathcal{O}}(\widehat{\mathcal{M}}_{\mathcal{X}}) \to
    \Fun(\mathcal{M}_{0}, \mathcal{S}) \] is a cartesian fibration,
  corresponding to the functor
  \[ X \in \Fun(\mathcal{M}_{0}, \mathcal{S}) \mapsto
    \Alg_{\mathcal{O}}(\widehat{\mathcal{M}}_{\mathcal{X}}(X,X)) \]
  arising from the family of monoidal \icats{}
  $\widehat{\mathcal{M}}_{\mathcal{X},X}^{\otimes}$. \qed
\end{cor}
\begin{remark}\label{rmk:Algfibeq}
  Suppose $\mathcal{M}$ is $\mathcal{S}$-admissible and $\mathcal{O}$
  is as above. Then we can
  use the equivalence
  \[\Alg_{\mathcal{O}}(\widehat{\mathcal{M}}_{\mathcal{X}}) \simeq
  \Seg_{\mathcal{O} \times_{\Dop} \mathcal{M}}(\mathcal{S})\] and the
description of the cartesian fibration from \cref{propn:Segfib} to
conclude that there is fibrewise a natural equivalence
\[ \Alg_{\mathcal{O}}(\widehat{\mathcal{M}}_{\mathcal{S}}(X,X))
  \simeq \Mon_{\mathcal{O} \times_{\Dop}
    \mathcal{M}_{X}}(\mathcal{S}) \simeq \Alg_{\mathcal{O}
    \times_{\Dop} \mathcal{M}_{X}}(\mathcal{S}).\]
\end{remark}

\subsection{Enriched Day Convolution}\label{sec:enrdc}
In this subsection we generalize our construction slightly by showing
that if $\mathcal{M}$ is a double \icat{} and $\mathcal{V}^{\otimes}$
is an $\mathcal{M}$-monoidal \icat{} then in good cases there exists a
double \icat{} $\widehat{\mathcal{M}}_{\mathcal{V}}$ such that we have
a natural equivalence
\[ \Alg_{\mathcal{O}}(\widehat{\mathcal{M}}_{\mathcal{V}}) \simeq
  \Algd_{\mathcal{O} \times_{\Dop} \mathcal{M}}(\mathcal{V}).\] More
generally, an object $\widehat{\mathcal{M}}_{\mathcal{V}}$ with this
property will exist as a \gnsiopd{} that is not a double \icat{}. If
$\mathcal{V}^{\otimes}$ is given by Day convolution, we can obtain
$\widehat{\mathcal{M}}_{\mathcal{V}}$ from the following observation:
\begin{propn}\label{propn:Mmonadm}
  Suppose $\mathcal{M}$ is an $\mathcal{X}$-admissible double \icat{}
  where pullbacks in $\mathcal{X}$ preserve colimits,
  and $\mathcal{U}^{\otimes} \to \mathcal{M}$ is a small
  $\mathcal{M}$-monoidal \icat{}. Then $\mathcal{U}^{\otimes}$ is also
  an $\mathcal{X}$-admissible double \icat{}.
\end{propn}
\begin{proof}
  Given an active morphism $\phi
  \colon [m] \to [n]$ in $\simp$ and an object $u_{0m} \in
  \mathcal{U}^{\otimes}_{m}$ over $x_{0m} \in \mathcal{M}_{m}$,
  the functor $\mathcal{U}^{\otimes}_{n/u_{0m}} \to
  \mathcal{M}_{n/x_{0m}}$ is a cocartesian fibration, whose fibre at
  $y_{0n}$ and $\bar{\phi} \colon y_{0n} \to x_{0m}$ over $\phi$ we
  can identify with the slice
  $(\mathcal{U}^{\otimes}_{n,y_{0n}})_{/u_{0m}}$, defined using the
  cocartesian morphisms over $\bar{\phi}$. For $0
  \leq k \leq m$ we can decompose this as a product
  \[ (\mathcal{U}^{\otimes}_{n,y_{0n}})_{/u_{0m}} \simeq
    (\mathcal{U}^{\otimes}_{\phi(k),y_{0\phi(k)}})_{/u_{0k}} \times
    (\mathcal{U}^{\otimes}_{n-\phi(k),y_{\phi(k)n}})_{/u_{km}}.\]
  Given functors
  \[ F_{0k} \colon \mathcal{U}^{\otimes}_{\phi(k)/u_{0k}} \to \mathcal{X},\]
  \[ F_{kk} \colon \mathcal{U}^{\otimes}_{0/u_{kk}} \simeq \mathcal{M}_{0/x_{kk}} \to \mathcal{X},\]
  \[ F_{km} \colon \mathcal{U}^{\otimes}_{n-\phi(k)/u_{km}} \to \mathcal{X},\]
  together with natural transformations
  $F_{0k} \to F_{kk}|_{\mathcal{U}^{\otimes}_{\phi(k)/u_{0k}}}$,
  $F_{km} \to F_{kk}|_{\mathcal{U}^{\otimes}_{n-\phi(k)/u_{km}}}$,
  we have equivalences
  \[
    \begin{split}
      \colim_{\mathcal{U}^{\otimes}_{n/u_{0m}}}
      F_{0k}\times_{F_{kk}}F_{km}& \simeq
      \colim_{y_{0n} \in \mathcal{M}_{n/x_{0m}}}
      \colim_{(\mathcal{U}^{\otimes}_{n,y_{0n}})_{/u_{0m}}}
      F_{0k}\times_{F_{kk}}F_{km} \\
      & \simeq \colim_{y_{0n} \in \mathcal{M}_{n/x_{0m}}} \left(
        \colim_{(\mathcal{U}^{\otimes}_{\phi(k),y_{0\phi(k)}})_{/u_{0k}}} F_{0k}\right)
      \times_{F_{kk}} \left(
        \colim_{(\mathcal{U}^{\otimes}_{n-\phi(k),y_{\phi(k)n}})_{/u_{km}}}
        F_{km}\right) \\
     & \simeq \left( \colim_{y_{0\phi(k)}\mathcal{M}_{\phi(k)/x_{0k}}}
      \colim_{(\mathcal{U}^{\otimes}_{\phi(k),y_{0\phi(k)}})_{/u_{0k}}}
      F_{0k} \right)  \\
     & \hphantom{\simeq}\times_{\left(\colim_{\mathcal{M}_{0/x_{kk}}}
        F_{kk} \right)}
    \left(\colim_{y_{\phi(k)n} \in \mathcal{M}_{n-\phi(k)/x_{km}}}
      \colim_{(\mathcal{U}^{\otimes}_{n-\phi(k),y_{\phi(k)n}})_{/u_{km}}}
      F_{km}\right)\\
    &   \simeq 
    \left(\colim_{\mathcal{U}^{\otimes}_{\phi(k)/u_{0k}}}
      F_{0k} \right)
    \times_{\left(\colim_{\mathcal{U}^{\otimes}_{0/u_{kk}}}
        F_{kk} \right)}
    \left(\colim_{\mathcal{U}^{\otimes}_{n-\phi(k)/u_{km}}}
      F_{km}\right),
    \end{split}
    \]
    where the second equivalence uses that fibre products in
    $\mathcal{X}$ preserve colimits in each variable and the third
    equivalence uses the $\mathcal{X}$-admissibility of $\mathcal{M}$.
\end{proof}

\begin{cor}\label{cor:AlgdUS}
  Let $\mathcal{M}$ be an $\mathcal{S}$-admissible double \icat{} and
  let $\mathcal{U}^{\otimes}$ be a small $\mathcal{M}$-monoidal
  \icat{}. Then there exists a double \icat{}
  $\widehat{\mathcal{U}^{\otimes}_{\mathcal{S}}}$ such that for any
  \gnsiopd{} $\mathcal{O}$ we have a natural equivalence
  \[ \Alg_{\mathcal{O}}(\widehat{\mathcal{U}^{\otimes}_{\mathcal{S}}})
    \simeq \Seg_{\mathcal{O} \times_{\Dop}
      \mathcal{U}^{\otimes}}(\mathcal{S}) \simeq \Algd_{\mathcal{O}
      \times_{\Dop} \mathcal{M}}(\mathcal{O}
    \times_{\Dop}\mathcal{U}^{\otimes}_{\mathcal{S}}).\]
  Moreover, any $\mathcal{M}$-monoidal functor $\mathcal{U}^{\otimes}
  \to \mathcal{V}^{\otimes}$ induces a morphism of double \icats{}
  $\widehat{\mathcal{U}^{\otimes}_{\mathcal{S}}} \to \widehat{\mathcal{V}^{\otimes}_{\mathcal{S}}}$.
\end{cor}
\begin{proof}
  The only part that is not an immediate consequence of
  \cref{propn:Mmonadm} and our results in the previous subsections is
  the claim about $\mathcal{M}$-monoidal functors, which follows using
  the criterion of \cref{rmk:Dayconvftr}, where we can compute the
  colimits by decomposing them as in the proof of \cref{propn:Mmonadm}.
\end{proof}

\begin{remark}
  Let us describe the double \icat{}
  $\widehat{\mathcal{U}^{\otimes}_{\mathcal{S}}}$ a bit more explicitly.
  The objects of $\widehat{\mathcal{U}^{\otimes}_{\mathcal{S}}}$ are
  functors $\mathcal{U}^{\otimes}_{0} \simeq \mathcal{M}_{0} \to
  \mathcal{S}$, and the vertical morphisms are transformations of such
  functors. If $F,G \colon \mathcal{M}_{0} \to \mathcal{S}$ are
  two objects, then by \cref{propn:M1FG} the \icat{}
  $\widehat{\mathcal{U}^{\otimes}_{\mathcal{S}}}(F,G)$ of horizontal
  morphisms can be identified with
  $\Fun(\mathcal{U}^{\otimes}_{1,F,G}, \mathcal{S})$; here
  $\mathcal{U}^{\otimes}_{1,F,G} \simeq \mathcal{U}^{\otimes}_{1}
  \times_{\mathcal{M}_{1}} \mathcal{M}_{1,F,G}$ where
  $\mathcal{U}^{\otimes}_{1}\to \mathcal{M}_{1}$ is a cocartesian
  fibration. Using \cref{rmk:dayconv1} and
  \cite{freepres}*{Proposition 7.3} we can identify this \icat{} with
  $\Fun_{/\mathcal{M}_{1}}(\mathcal{M}_{1,F,G},
  \mathcal{U}^{\otimes}_{\mathcal{S},1})$. Translating the formula for
  composition of horizontal morphisms from \cref{rmk:horizref}
  in these terms, the composite of $\Phi \colon \mathcal{M}_{1,F,G}
  \to \mathcal{U}^{\otimes}_{\mathcal{S},1}$ and $\Psi \colon
  \mathcal{M}_{1,G,H} \to \mathcal{U}^{\otimes}_{\mathcal{S},1}$ is
  the functor from $\mathcal{M}_{1,F,H}$ given by
  \[ (x, p \in F(x_{00}), q \in H(x_{11})) \mapsto \colim_{(\alpha
      \colon y \to x) \in \mathcal{M}_{2/x}} \colim_{p' \in
      F(y_{00})_{p}} \colim_{q' \in H(y_{22})_{q}} \colim_{r \in
      H(y_{11})} \alpha_{!}(\Phi(y_{01}, p', r), \Psi(y_{12}, r,
    q')),\] where
  $\alpha_{!} \colon \mathcal{U}^{\otimes}_{\mathcal{S},1,y_{01}}
  \times \mathcal{U}^{\otimes}_{\mathcal{S},1,y_{12}} \simeq
  \mathcal{U}^{\otimes}_{\mathcal{S},2,y} \to
  \mathcal{U}^{\otimes}_{\mathcal{S},1,x}$ is the cocartesian
  pushforward along $\alpha$ (given by a left Kan extension along the
  corresponding operation for $\mathcal{U}^{\otimes}$).
\end{remark}

\begin{remark}
  For an $\mathcal{M}$-monoidal \icat{} of the form
  $\mathcal{M} \times_{\Dop} \mathcal{C}^{\otimes}$, where
  $\mathcal{C}^{\otimes} \to \Dop$ is a monoidal \icat{}, the
  description above simplifies considerably: We can identify
  $(\mathcal{M}\times_{\Dop} \mathcal{C}^{\otimes})_{\mathcal{S}}$
  with the pullback $\mathcal{M} \times_{\Dop}
  \mathcal{C}^{\otimes}_{\mathcal{S}}$, so that horizontal morphisms
  reduce to functors $\mathcal{M}_{1,F,G} \to
  \Fun(\mathcal{C},\mathcal{S})$. The composition of $\Phi \colon
  \mathcal{M}_{1,F,G} \to \Fun(\mathcal{C},\mathcal{S})$ and $\Psi
  \colon \mathcal{M}_{1,G,H} \to \Fun(\mathcal{C},\mathcal{S})$ is
  then given by the formula
  \[ (x, p \in F(x_{00}), q \in H(x_{11})) \mapsto \colim_{y \in
      \mathcal{M}_{2/x}} \colim_{p' \in
      F(y_{00})_{p}} \colim_{q' \in H(y_{22})_{q}} \colim_{r \in
      H(y_{11})} \Phi(y_{01}, p', r) \otimes \Psi(y_{12}, r, q'),\]
  where $\otimes$ denotes the Day convolution.
\end{remark}

More generally, we can obtain $\widehat{\mathcal{M}}_{\mathcal{V}}$ by
passing to a larger universe:
\begin{defn}
  Let $\widehat{\mathcal{S}}$ denote the (very large) \icat{} of large
  spaces. If $\mathcal{V}^{\otimes}$ is a locally small (but potentially
  large) $\mathcal{M}$-monoidal \icat{}, let
  $\widehat{\mathcal{M}}_{\mathcal{V}}$ denote the full subcategory of
  $\widehat{\mathcal{V}^{\op,\otimes}_{\widehat{\mathcal{S}}}}$ spanned by
  the objects whose
  \begin{itemize}
  \item inert restrictions to the fibre at $0$ are functors
    $\mathcal{M}_{0} \to \widehat{\mathcal{S}}$ that factor through
    the full subcategory $\mathcal{S}$,
  \item inert restrictions to the fibre at $1$ correspond to functors
    of the form
    \[ \mathcal{M}_{1,F,G} \to
      \mathcal{V}^{\op,\otimes}_{\widehat{\mathcal{S}},1}\]
    that factor through the full subcategory
    $\mathcal{V}^{\otimes}_{1}$ (via the Yoneda embedding
    $\mathcal{V}^{\otimes} \hookrightarrow
    \mathcal{V}^{\op,\otimes}_{\widehat{\mathcal{S}}}$).
  \end{itemize}
\end{defn}

\begin{propn}\label{propn:enrDCopd}
  Let $\mathcal{M}$ be an $\mathcal{S}$-admissible double \icat{} and
  $\mathcal{V}^{\otimes}$ a locally small $\mathcal{M}$-monoidal \icat{}.
  \begin{enumerate}[(i)]
  \item $\widehat{\mathcal{M}}_{\mathcal{V}}$ is a
    \gnsiopd{}.
  \item For any \gnsiopd{} $\mathcal{O}$ we have a natural
    equivalence
    \[ \Alg_{\mathcal{O}}(\widehat{\mathcal{M}}_{\mathcal{V}}) \simeq
      \Algd_{\mathcal{O} \times_{\Dop} \mathcal{M}}(\mathcal{O}
      \times_{\Dop} \mathcal{V}^{\otimes}).\]
  \item An $\mathcal{M}$-monoidal functor $\mathcal{U}^{\otimes} \to
    \mathcal{V}^{\otimes}$ induces a natural morphism of \gnsiopds{}
    $\widehat{\mathcal{M}}_{\mathcal{U}} \to
    \widehat{\mathcal{M}}_{\mathcal{V}}$.
  \item $\widehat{\mathcal{M}}_{\mathcal{V},1}\to
    \widehat{\mathcal{M}}_{\mathcal{V},0}^{\times 2}\simeq
    \Fun(\mathcal{M}_{0}, \mathcal{S})^{\times 2}$ is a cartesian
    fibration, and the inclusion 
    $\widehat{\mathcal{M}}_{\mathcal{V},1} \to
    \widehat{\mathcal{V}^{\op,\otimes}_{\widehat{\mathcal{S}}}}$
    preserves cartesian morphisms.    
  \end{enumerate}
\end{propn}
\begin{proof}
  Part (i) follows from \cref{lem:subgnsiopd}, which also identifies
  $\Alg_{\mathcal{O}}(\widehat{\mathcal{M}}_{\mathcal{V}})$ with the
  full subcategory of $\mathcal{O}$-algebras in
  $\widehat{\mathcal{V}^{\op,\otimes}_{\widehat{\mathcal{S}}}}$ whose
  restrictions to $\mathcal{O}_{0}$ and $\mathcal{O}_{1}$ factor
  through $\mathcal{S} \subseteq \widehat{\mathcal{S}}$ and
  $\mathcal{V}^{\otimes}_{1} \subseteq
  (\widehat{\mathcal{V}^{\op,\otimes}_{\widehat{\mathcal{S}}}})_{1}$. Under
  the equivalence between
  $\Alg_{\mathcal{O}}(\widehat{\mathcal{V}^{\op,\otimes}_{\widehat{\mathcal{S}}}})$ and
  $\widehat{\Algd}_{\mathcal{O} \times_{\Dop} \mathcal{M}}(\mathcal{O}
  \times_{\Dop} \mathcal{V}^{\op,\otimes}_{\widehat{\mathcal{S}}})$
  from \cref{cor:AlgdUS} (where the latter denotes the \icat{} of $\mathcal{O}$-algebroids defined
  using $\widehat{\mathcal{S}}$), this full subcategory corresponds to
  that of $\mathcal{O} \times_{\Dop} \mathcal{M}$-algebroids
  $(\mathcal{O} \times_{\Dop} \mathcal{M})_{X} \to
  \mathcal{V}^{\op,\otimes}_{\widehat{\mathcal{S}}}$ such that $X$ is
  a functor $\mathcal{O}_{0} \times \mathcal{M}_{0} \to \mathcal{S}$ and the functor
  $(\mathcal{O} \times_{\Dop} \mathcal{M})_{X,1} \to
  \mathcal{V}^{\op,\otimes}_{\widehat{\mathcal{S}},1}$ factors through
  $\mathcal{V}^{\otimes}_{1}$; since the Yoneda embedding is a fully
  faithful $\mathcal{O} \times_{\Dop}\mathcal{M}$-monoidal functor,
  this full subcategory is precisely $\Algd_{\mathcal{O} \times_{\Dop}
  \mathcal{M}}(\mathcal{O} \times_{\Dop} \mathcal{V}^{\otimes})$,
which gives (ii). From \cref{cor:AlgdUS} we also know that an
$\mathcal{M}$-monoidal functor induces a morphism of double \icats{} 
$\widehat{\mathcal{U}^{\op,\otimes}_{\widehat{\mathcal{S}}}} \to
\widehat{\mathcal{V}^{\op,\otimes}_{\widehat{\mathcal{S}}}}$; this
evidently takes the full subcategory
$\widehat{\mathcal{M}}_{\mathcal{U}}$ into
$\widehat{\mathcal{M}}_{\mathcal{V}}$, which proves (iii).

We know that
$\widehat{\mathcal{V}^{\op,\otimes}_{\widehat{\mathcal{S}}}}$ is a
framed double \icat{}, so that
$\widehat{\mathcal{V}^{\op,\otimes}_{\widehat{\mathcal{S}},1}} \to
(\widehat{\mathcal{V}^{\op,\otimes}_{\widehat{\mathcal{S}},0}})^{\times
2}$ is a cartesian and cocartesian fibration. From the description of
the cartesian morphisms in \cref{rmk:framedcart} it follows that these
restrict to $\widehat{\mathcal{M}}_{\mathcal{V}}$,
which proves (iv).
\end{proof}

\begin{propn}\label{propn:enrDCloccoc}
  Let $\mathcal{M}$ be an $\mathcal{S}$-admissible double \icat{}.
  \begin{enumerate}[(i)]
  \item   Suppose that
  $\mathcal{V}^{\otimes}$ is a locally small $\mathcal{M}$-monoidal
  \icat{} such that:
  \begin{enumerate}[(1)]
  \item For every $x \in \mathcal{M}_{1}$ the \icat{}
    $\mathcal{V}_{x}$ has colimits indexed by \igpds{} and by the
    \icats{} $\mathcal{M}_{n/y}$ for $y \in \mathcal{M}_{1}$,
  \item These colimits are preserved by the functors
    $f_{!} \colon \mathcal{V}_{x}\to \mathcal{V}_{x'}$ induced by the
    cocartesian morphisms over $f \colon x \to x'$ in
    $\mathcal{M}_{1}$.
  \end{enumerate}
  Then  $\widehat{\mathcal{M}}_{\mathcal{V}} \to \Dop$ is a locally
    cocartesian fibration.
  \item Suppose $\mathcal{V}^{\otimes}$ and $\mathcal{U}^{\otimes}$
    are locally small $\mathcal{M}$-monoidal \icats{} satisfying
    conditions (1) and (2) in (i), and 
    $\mathcal{U}^{\otimes} \to \mathcal{V}^{\otimes}$ is an
    $\mathcal{M}$-monoidal functor such that:
    \begin{itemize}
    \item[(3)] For all $x$ the functor
    $\mathcal{U}_{x} \to \mathcal{V}_{x}$ preserves colimits indexed
    by \igpds{} and by the \icats{} $\mathcal{M}_{n/y}$.
    \end{itemize}
    Then the induced morphism
    $\widehat{\mathcal{M}}_{\mathcal{U}} \to
    \widehat{\mathcal{M}}_{\mathcal{V}}$ of \gnsiopds{} from
    \cref{propn:enrDCopd}(iii) preserves locally cocartesian
    morphisms.
  \end{enumerate}
\end{propn}
\begin{proof}
  We already know  from
  \cref{propn:enrDCopd} that $\widehat{\mathcal{M}}_{\mathcal{V}}$ is a
\gnsiopd{}, so to prove (i) it suffices by \cref{lem:loccoc} to show
there are locally cocartesian morphisms over the active morphisms to
$[1]$. Let $\alpha_{n} \colon [1] \to [n]$ denote the unique active
morphism to $[n]$ in $\simp$. Given an object of
$\widehat{\mathcal{M}}_{\mathcal{V}}$, which we can identify with
$(X_{1},\ldots,X_{n})$ with $X_{i}$ in
$\widehat{\mathcal{M}}_{\mathcal{V}}(F_{i-1},F_{i})$ for
$F_{i} \colon \mathcal{M}_{0}\to \mathcal{S}$, as well as $Y$ in
$\widehat{\mathcal{M}}_{\mathcal{V}}(G,H)$, we have
\[
  \Map_{\widehat{\mathcal{M}}_{\mathcal{V}}}^{\alpha_{n}}((X_{1},\ldots,X_{n}),
  Y) \simeq
  \Map^{\alpha_{n}}_{\widehat{\mathcal{V}^{\op,\otimes}_{\widehat{\mathcal{S}}}}}((X_{1},\ldots,X_{n}),
  Y) \simeq
  \Map_{\widehat{\mathcal{V}^{\op,\otimes}_{\widehat{\mathcal{S}},1}}}(X_{1}\odot_{F_{1}}\cdots
  \odot_{F_{n-1}} X_{n}, Y) \]
Since $\widehat{\mathcal{V}^{\op,\otimes}_{\widehat{\mathcal{S}}}}$ is
  a framed double \icat{}, if we take the fibre of the right-hand side
  over maps $g \colon F_{0}\to G$, $h \colon F_{n} \to G$, we get
  \[ \Map_{\widehat{\mathcal{V}^{\op,\otimes}_{\widehat{\mathcal{S}},1}}}(X_{1}\odot_{F_{1}}\cdots
    \odot_{F_{n-1}} X_{n}, Y)_{(g,h)} \simeq
    \Map_{\Fun(\mathcal{M}_{1,F_{0},F_{n}}, \mathcal{V}^{\op,\otimes}_{\widehat{\mathcal{S}},1})}(X_{1}\odot_{F_{1}}\cdots
    \odot_{F_{n-1}} X_{n}, (g,h)^{*}Y).\]
  Using \cite{freepres}*{Proposition 5.1} and the notation of
  \cref{rmk:ncomp}, we can expand this out as the limit over $(x,p,q) \to (x',p',q') \in
  \Tw^{r}(\mathcal{M}_{1,F_{0},F_{n}})$ of
  \[
    \Map_{\mathcal{V}^{\op,\otimes}_{\widehat{\mathcal{S}},1}}\left(
      \colim_{\alpha \colon y \to x \in \mathcal{M}_{n/x}}
      \colim_{(t_{0},\ldots,t_{n}) \in F(y)_{p,q}}
      \alpha_{!}(X_{1}(y_{01},t_{0},t_{1}),\ldots,X_{n}(y_{(n-1)n},
      t_{n-1},t_{n})), \,\, (g,h)^{*}Y(x',p',q')\right)\]
  \[ \simeq \lim_{\alpha \colon y \to x \in \mathcal{M}_{n/x}^{\op}}
    \lim_{(t_{0},\ldots,t_{n}) \in F(y)_{p,q}}
    \Map_{\mathcal{V}^{\otimes}_{1}}\left(\alpha_{!}(X_{1}(y_{01},t_{0},t_{1}),\ldots,X_{n}(y_{(n-1)n},
      t_{n-1},t_{n})), \,\,  (g,h)^{*}Y(x',p',q')\right)\]
  \[ \simeq \Map_{\mathcal{V}^{\otimes}_{1}}\left(\colim_{\alpha \colon y \to x \in \mathcal{M}_{n/x}}
      \colim_{(t_{0},\ldots,t_{n}) \in F(y)_{p,q}}
      \alpha_{!}(X_{1}(y_{01},t_{0},t_{1}),\ldots,X_{n}(y_{(n-1)n},
      t_{n-1},t_{n})), \,\, (g,h)^{*}Y(x',p',q')\right),\]
  under our assumptions. Let $\alpha_{n,!}(X_{1},\ldots,X_{n})$ denote
  the functor $\mathcal{M}_{1,F_{0},F_{n}} \to
  \mathcal{V}^{\otimes}_{1}$ given by
  \[ (x,p,q) \mapsto \colim_{\alpha \colon y \to x \in
      \mathcal{M}_{n/x}} \colim_{(t_{0},\ldots,t_{n}) \in F(y)_{p,q}}
    \alpha_{!}(X_{1}(y_{01},t_{0},t_{1}),\ldots,X_{n}(y_{(n-1)n},
    t_{n-1},t_{n})),\] then we see using \cref{propn:enrDCopd}(iv)
  that the mapping space we started with is equivalent to
  $\Map_{\widehat{\mathcal{M}}_{\mathcal{V},1}}(\alpha_{n,!}(X_{1},\ldots,X_{n}),
  Y)$, so that
  $(X_{1},\ldots,X_{n}) \to \alpha_{n,!}(X_{1},\ldots,X_{n})$ is
  indeed a locally cocartesian morphism over $\alpha_{n}$, as
  required. Moreover, part (ii) also follows immediately from this
  description of the locally cocartesian morphisms.
\end{proof}

For an arbitrary $\mathcal{S}$-admissible double \icat{} $\mathcal{M}$
it seems extremely awkward to formulate a condition on $\mathcal{V}$
such that $\widehat{\mathcal{M}}_{\mathcal{V}}$ is a double
\icat{}. We therefore content ourselves with the following observation:
\begin{propn}\label{propn:enrDCgpd}
  Let $\mathcal{M}$ be a double \icat{} such that $\mathcal{M}_{0}$ is
  an \igpd{}.
  \begin{enumerate}[(i)]
  \item If $\mathcal{V}^{\otimes}$ is an $\mathcal{M}$-monoidal
    \icat{} that is compatible with colimits indexed by \igpds{} and by
    the \icats{} $\mathcal{M}_{n/x}$ for $x \in \mathcal{M}_{1}$, then
    $\widehat{\mathcal{M}}_{\mathcal{V}}$ is a framed double \icat{}.
  \item If $\mathcal{U}^{\otimes}\to \mathcal{V}^{\otimes}$ is an
    $\mathcal{M}$-monoidal functor between $\mathcal{M}$-monoidal
    \icats{} with colimits as in (i) such that each functor
    $\mathcal{U}_{x} \to \mathcal{V}_{x}$ preserves these colimits,
    then the natural morphism of \gnsiopds{}
    $\widehat{\mathcal{M}}_{\mathcal{U}} \to
    \widehat{\mathcal{M}}_{\mathcal{V}}$ preserves cocartesian
    morphisms.
  \end{enumerate}
\end{propn}

\begin{proof}
  We know from \cref{propn:enrDCloccoc}(i) that
  $\widehat{\mathcal{M}}_{\mathcal{V}} \to \Dop$ in (i) is a locally
  cocartesian fibration. If we can show this is actually a cocartesian
  fibration, then (ii) also follows since \cref{propn:enrDCloccoc}(ii)
  implies the functor preserves locally cocartesian morphisms.

  By \cref{lem:loccoc} to show we have a cocartesian fibration it
  suffices to check that for every active map
  $\phi \colon [2] \to [n]$ and for
  $\Phi \in \widehat{\mathcal{M}}_{\mathcal{V},n}$ the canonical map
  \[  \alpha_{n,!}\Phi = (\phi d_{1})_{!}\Phi \to d_{1,!}\phi_{!}\Phi\]
  is an equivalence.

  For $\Phi$ over $F_{0},\ldots,F_{n} \colon \mathcal{M}_{0} \to
  \mathcal{S}$, the object $\alpha_{n,!}\Phi$ is given at $(x,p,q)$ by 
  \[ \colim_{y \xto{\gamma} x \in \mathcal{M}_{n/x}}
    \colim_{(t_{0},\ldots,t_{n})) \in F(y)_{p,q}}
  \gamma_{!}(\Phi_{1}(y_{01},t_{0},t_{1}), \ldots, 
  \Phi_{n}(y_{(n-1)n},t_{n-1},t_{n})),\]
while if $\phi_{ij}$ denotes the active part of $\phi \circ
\rho_{ij}$, we have
\[
d_{1,!}\phi_{!}\Phi(x,p,q) \simeq   \colim_{z \xto{\zeta} x \in \mathcal{M}_{2/x}} \colim_{t \in
    F_{\phi(1)}(z_{11})} \zeta_{!}( \phi_{01,!}\Phi_{0\phi(1)}(z_{01}, p,
  t), \phi_{12,!}\Phi_{\phi(1)n}(z_{12}, t, q)),\]
where (setting $\ell := \phi(1)$)
\[ \phi_{01,!}\Phi_{0\ell}(z_{01}, p,
  t) \simeq \colim_{u \xto{\delta} z_{01} \in
    \mathcal{M}_{\ell/z_{01}}} \colim_{(t_{0},\ldots,t_{\ell})
      \in F_{0\ell}(u)}
    \delta_{!}(\Phi_{1}(u_{01},t_{0},t_{1}),\ldots,
    \Phi_{\ell}(u_{(\ell-1)\ell}, t_{\ell-1},
    t_{\ell})), \]
\[ \phi_{12,!}\Phi_{\ell n}(z_{12}, t, q) \simeq   
\colim_{v \xto{\epsilon} z_{12} \in
    \mathcal{M}_{n-\ell/z_{12}}} \colim_{(t_{\ell},\ldots,t_{n})
      \in F_{\ell n}(u)}
   \epsilon_{!}(\Phi_{\ell+1}(v_{\ell(\ell+1)},t_{\ell},t_{\ell+1}),\ldots,
    \Phi_{n}(v_{(n-1)n}, t_{n-1}, t_{n})).
\]
 Since $\mathcal{V}^{\otimes}$ is compatible with these colimits, we
 can pass these colimits past $\zeta_{!}$ in the expression for
 $d_{1,!}\phi_{!}\Phi(x,p,q)$, obtaining an expression for this object
 as an iterated colimit of terms of the form
 \[ \zeta_{!}(\delta_{!}(\Phi_{1}(u_{01},t_{0},t_{1}),\ldots,
    \Phi_{\ell}(u_{(\ell-1)\ell}, t_{\ell-1},
    t_{\ell})), 
    \epsilon_{!}(\Phi_{\ell+1}(v_{\ell(\ell+1)},t_{\ell},t_{\ell+1}),\ldots,
    \Phi_{n}(v_{(n-1)n}, t_{n-1}, t_{n}))).\]
  Note that we can rewrite this as
  \[ (\zeta \circ (\delta,\epsilon))_{!}(\Phi_{1}(u_{01},t_{0},t_{1}),
    \ldots, \Phi_{n}(v_{(n-1)n}, t_{n-1}, t_{n}))\] since
  $\mathcal{V}^{\otimes}$ is cocartesian. Thus we can rewrite the
  expression for $d_{1,!}\phi_{!}\Phi(x,p,q)$ as
\[  \colim_{\zeta \in
    \mathcal{M}_{2/x}} \colim_{(\delta, \epsilon) \in \mathcal{M}_{\ell/z_{01}} \times
    \mathcal{M}_{n-\ell/z_{12}}} \colim_{(t_{0},\ldots,t_{n}) \in
    F_{0\ell}(u) \times_{F_{\ell}(z)} F_{\ell n}(v)} (\zeta \circ (\delta,\epsilon))_{!}(\Phi_{1}(u_{01},t_{0},t_{1}),
    \ldots, \Phi_{n}(v_{(n-1)n}, t_{n-1}, t_{n}))
\]
  On the other hand, we can evaluate the colimit over
  $\mathcal{M}_{n/x}$ in the expression for $\alpha_{n,!}\Phi$ by
  first taking a left Kan extension along the functor
  $\phi_{!} \colon \mathcal{M}_{n/x}\to \mathcal{M}_{2/x}$ given by the
  cocartesian morphisms over $\phi$. For a functor $f$ out of
  $\mathcal{M}_{n/x}$ this gives
  \[ \colim_{y \to x \in \mathcal{M}_{n/x}} f \simeq \colim_{z \to x
      \in \mathcal{M}_{2/x}} \colim_{(\mathcal{M}_{n/x})_{/z}} f,\]
  where $(\mathcal{M}_{n/x})_{/z} \simeq \mathcal{M}_{n/z} \simeq
  \mathcal{M}_{\phi(1)/z_{01}}\times_{\mathcal{M}_{0/z_{11}}}\mathcal{M}_{n-\phi(1)/z_{12}}$,
  which is equivalent to $\mathcal{M}_{\phi(1)/z_{01}} \times
  \mathcal{M}_{n-\phi(1)/z_{12}}$ since $\mathcal{M}_{0}$ is an
  \igpd{}. Rewriting our expression for $\alpha_{n,!}\Phi$ using this,
  we get exactly our last formula for $d_{1,!}\phi_{!}\Phi$, as required.
\end{proof}

\begin{remark}\label{rmk:hatMVdesc}
  We can describe the double \icat{}
  $\widehat{\mathcal{M}}_{\mathcal{V}}$ as follows:
  \begin{itemize}
  \item its objects are functors $\mathcal{M}_{0} \to \mathcal{S}$,
    and its vertical morphisms are natural transformations of these,
  \item its horizontal morphisms from $F$ to $G$ are functors
    $\mathcal{M}_{1,F,G} \to \mathcal{V}^{\otimes}_{1}$ over
    $\mathcal{M}_{1}$,
  \item the composite of horizontal morphisms $\Phi \colon
    \mathcal{M}_{1,F,G}  \to \mathcal{V}^{\otimes}_{1}$ and $\Psi
    \colon \mathcal{M}_{1,G,H} \to \mathcal{V}^{\otimes}_{1}$ is the
    functor $\mathcal{M}_{1,F,H} \to \mathcal{V}^{\otimes}_{1}$ given
    by 
    \[ (x \in \mathcal{M}_{1}, p \in F(x_{00}), q\in F(x_{11}))
      \mapsto \colim_{\alpha \colon y \to x \in \mathcal{M}_{2/x}}
      \colim_{t \in H(y_{11})} \alpha_{!}(\Phi(y_{01}, p, t), \Psi(y_{12},t,q)).\]
  \end{itemize}
\end{remark}

\subsection{Example: Enriched $\infty$-Categories as Associative
  Algebras}\label{subsec:enrcat}

In this subsection we illustrate our results on Day convolution by
considering a simple example of our construction: we will give a
description of enriched \icats{} as associative algebras in a family
of monoidal \icats{}. An alternative construction of these monoidal
\icats{} is given in \cite{HinichYoneda}, where this perspective on
enriched \icats{} is developed extensively.

\begin{remark}
  Our construction will extend the following description of ordinary
  enriched categories: If $S$ is a set and $\mathbf{V}$ is a monoidal
  category where the tensor product preserves coproducts in each
  variable, then there is a monoidal structure on
  $\Fun(S \times S, \mathbf{V})$ given by
\[(F \otimes G)(i,k) \cong \coprod_{j \in S} F(i,j) \otimes G(j,k),\]
with unit $\bbone$ the functor \[\bbone(i,j) =
\begin{cases}
  \bbone,& i = j\\
  \emptyset, & i \neq j.
\end{cases}
\]
This is sometimes known as the ``matrix multiplication'' tensor
product, since the formula is a ``categorified'' version of that for
multiplication of matrices. An associative algebra $A$ in the category
$\Fun(S \times S,
\mathbf{V})$ with this tensor product is the same thing as a $\mathbf{V}$-category with
set of objects $S$:
\begin{itemize}
\item The multiplication map $A \otimes A \to A$ supplies composition
  maps $A(i,j) \otimes A(j,k) \to A(i,k)$.
\item The unit map $\bbone \to A$ supplies identity maps
  $\bbone \to A(i,i)$.
\end{itemize}
\end{remark}

Let us consider first the result of applying Day convolution to the
simplest double \icat{}, namely $\Dop$. This is trivially
$\mathcal{X}$-admissible for any \icat{} $\mathcal{X}$ with pullbacks,
and so by \cref{propn:dayconvdouble} there is a double \icat{}
$\widehat{\simp}^{\op}_{\mathcal{X}}$ which by \cref{cor:Meq} has the
universal property that for any \gnsiopd{} $\mathcal{O}$ there is a
natural equivalence
\[ \Alg_{\mathcal{O}}(\widehat{\simp}^{\op}_{\mathcal{X}}) \simeq
  \Seg_{\mathcal{O}}(\mathcal{X}).\]
By construction, the fibre $\widehat{\simp}^{\op}_{\mathcal{X},n}$
is the \icat{} of functors $\bbS^{n} \to \mathcal{X}$ that are right
Kan extended from $\bbL^{n}$. We thus see that:
\begin{itemize}
\item objects of $\widehat{\simp}^{\op}_{\mathcal{X}}$ are objects
  of $\mathcal{X}$,
\item vertical morphisms (morphisms in
  $\widehat{\simp}^{\op}_{\mathcal{X},0}$) are morphisms in
  $\mathcal{X}$,
\item horizontal morphisms (objects in
  $\widehat{\simp}^{\op}_{\mathcal{X},1}$) are \emph{spans} in
  $\mathcal{X}$, \ie{} diagrams of shape
  \[ \bullet \from \bullet \to \bullet, \]
\item squares (morphisms in $\widehat{\simp}^{\op}_{\mathcal{X},1}$)
  are diagrams of shape
  \[
    \begin{tikzcd}
      \bullet \arrow{d} & \bullet \arrow{l} \arrow{r} \arrow{d}& \bullet\arrow{d} \\
      \bullet & \bullet \arrow{l} \arrow{r} & \bullet,
    \end{tikzcd}
  \]
\item composition of horizontal morphisms is given by taking pullbacks.
\end{itemize}
Indeed, the double \icat{}
$\widehat{\simp}^{\op}_{\mathcal{X},n}$ is precisely the double
\icat{} $\txt{SPAN}^{+}(\mathcal{X})$ of spans constructed in
\cite{spans}, and its universal property is that established in
\cite{spanalg}. In particular, we have an equivalence
\[ \Alg_{\Dop}(\widehat{\simp}^{\op}_{\mathcal{X}}) \simeq
  \Seg_{\Dop}(\mathcal{X}),\]
identifying associative algebras in the double \icat{}
$\widehat{\simp}^{\op}_{\mathcal{X}}$ with category objects in
$\mathcal{X}$. Specializing to the \icat{}
$\mathcal{S}$ of spaces, this says that associative algebras in
$\widehat{\simp}^{\op}_{\mathcal{S}}$ are equivalent to \emph{Segal
  spaces}, which describe the algebraic structure of
\icats{}.

By \cref{cor:dayconvmonalg}, the restriction
$\Alg_{\Dop}(\widehat{\simp}^{\op}_{\mathcal{S}}) \to \mathcal{S}$ is
a cartesian fibration, with fibre at a space $X$ given by
$\Alg_{\Dop}(\widehat{\simp}^{\op}_{\mathcal{S}}(X,X))$. Here
$\widehat{\simp}^{\op}_{\mathcal{S}}(X,X)$ is
equivalent to $\mathcal{S}_{/X \times X} \simeq \Fun(X \times X,
\mathcal{S})$. The monoidal structure is given by pullback of spans,
which in terms of functors to $\mathcal{S}$ admits the following description:
\begin{propn}
  For any space $X$ there is a monoidal structure on the \icat{}
  $\Fun(X \times X, \mathcal{S})$ such that
  \begin{enumerate}[(i)]
  \item the tensor product of $F$ and $G$ is given by 
    \[(F \otimes G)(x,x') \simeq \colim_{y \in X} F(x,y) \times
    G(y,x').\]
  \item the unit $\bbone$ is given by 
    \[ \bbone(x,y) \simeq \Map_{X}(x,y) \simeq 
    \begin{cases}
      \emptyset, & x \not\simeq y\\
      \Omega_{x}X, & x \simeq y,
    \end{cases}
    \]
    where $\Map_{X}(x,y)$ is the mapping space in the \igpd{} $X$,
    \ie{} the space of paths from $x$ to $y$ in $X$.
  \item we have
    $\Alg_{\Dop}(\Fun(X \times X, \mathcal{S}))
    \simeq \Seg(\mathcal{S})_{X}$.
  \end{enumerate}
\end{propn}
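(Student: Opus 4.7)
The plan is to apply Theorem~\ref{thm:DCunivprop} with $\mathcal{V} = \mathcal{S}$ to the admissible double $\infty$-category $\mathcal{X} = \Dop_X$, whose fibre at $[n]$ is the space $X^{n+1}$. Since $(\Dop_X)_1 \simeq X \times X$ is already a space, $\mathcal{P}((\Dop_X)_1) \simeq \Fun(X \times X, \mathcal{S})$, and the theorem automatically endows this functor $\infty$-category with a monoidal structure.

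I deduce the formulas (i) and (ii) directly from Remark~\ref{rmk:tensorformula}. For (i), $(\Dop_X)_2 \simeq X^3$, and the structure map $X(d_1)\colon X^3 \to X \times X$ forgets the middle coordinate while $\rho_{1,!}$ and $\rho_{2,!}$ read off the two adjacent pairs; consequently the slice $((\Dop_X)_2^{\op})_{/(x,x')}$ is equivalent to the homotopy fibre of $X(d_1)$ over $(x,x')$, parametrised by the middle coordinate $y \in X$, and the colimit collapses to $\colim_{y \in X} F(x,y) \times G(y,x')$. For (ii), the relevant structure map $(\Dop_X)_0 \to (\Dop_X)_1$ is the diagonal $\Delta\colon X \to X \times X$, so $((\Dop_X)_0^{\op})_{/(x,x')}$ is the homotopy fibre of $\Delta$ at $(x,x')$, which is precisely the path space $\Omega_{x,x'}X$; since $X$ is already an $\infty$-groupoid the localisation $\|\cdot\|$ acts trivially, yielding the stated unit formula.

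For (iii), Theorem~\ref{thm:DCunivprop} applied with $\mathcal{O} = \Dop$ (viewed as the associative $\infty$-operad) gives
\[ \Alg_{\Dop}(\Fun(X \times X, \mathcal{S})) \simeq \Alg_{(\Dop_X)^{\vee,\op}}(\mathcal{S}),\]
which by Proposition~\ref{propn:algismon} is equivalent to $\Mon_{(\Dop_X)^{\vee,\op}}(\mathcal{S})$. Since $\Dop_X \to \Dop$ has space fibres, the passage between $\Dop_X$ and its Cartesian-dual opposite becomes invisible once one restricts to Segal-type functors to $\mathcal{S}$, so this $\infty$-category is equivalent to $\Mon_{\Dop_X}(\mathcal{S})$. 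The preceding proposition then identifies the latter with $\Seg(\mathcal{S})_X$. The only mild obstacle is bookkeeping the $\vee$-and-op dualities so as to land at the same functor to $\mathcal{S}$ that classifies $\Dop_X \to \Dop$; once one uses that all fibres are $\infty$-groupoids, there is nothing substantive left to prove.
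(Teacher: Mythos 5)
Your proposal is correct and follows exactly the paper's route: the paper derives this proposition by applying Theorem~\ref{thm:DCunivprop} to the admissible double $\infty$-category $\Dop_X$ (admissibility having been checked in the preceding lemma via Example~\ref{ex:spaceadm}), reading off (i) and (ii) from Remark~\ref{rmk:tensorformula} using that the relevant slices are the homotopy fibres of $X^3\to X^2$ and of the diagonal $X\to X\times X$, and obtaining (iii) from the identification $\Seg(\mathcal{S})_X\simeq\Mon_{\Dop_X}(\mathcal{S})$ together with Proposition~\ref{propn:algismon} and the fact that $(\Dop_X)^{\vee,\op}\simeq\Dop_X$ since all fibres are spaces. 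Your bookkeeping of the dualities and the computation of the slices are exactly the (unwritten) content of the paper's one-line proof.
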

In other words, \icats{} with space of objects $X$ are associative
algebras in $\Fun(X \times X, \mathcal{S})$ with this monoidal structure.

Now we want to consider the analogue of this result for
\emph{enriched} \icats{}. Propositions~\ref{propn:enrDCopd} and
\ref{propn:enrDCgpd} specialize to give
the following:
\begin{propn}
  Let $\mathcal{C}^{\otimes} \to \Dop$ be a monoidal \icat{}
  compatible with colimits indexed by \igpds{}. Then there is a framed
  double
  \icat{} $\widehat{\Dop_{\mathcal{C}}}$ such that for any \gnsiopd{}
  $\mathcal{O}$ there is an equivalence
  \[\Alg_{\mathcal{O}}(\widehat{\Dop_{\mathcal{C}}})  \simeq
    \Algd_{\mathcal{O}}(\mathcal{O}\times_{\Dop}
    \mathcal{C}^{\otimes}).\]
  A monoidal functor $\mathcal{C}^{\otimes} \to \mathcal{D}^{\otimes}$
  induces a natural morphism of \gnsiopds{}
  $\widehat{\Dop_{\mathcal{C}}} \to \widehat{\Dop_{\mathcal{D}}}$, and
  this preserves cocartesian morphisms if the monoidal functor
  preserves colimits indexed by \igpds{}.
\end{propn}

In particular, we get an equivalence
\[\Alg_{\Dop}(\widehat{\Dop_{\mathcal{C}}})  \simeq
    \Algd_{\Dop}(\mathcal{C}^{\otimes}),\]
where as in \cref{ex:enrcat} the right-hand side is the model of enriched \icats{} considered
in \cite{enr}. Specializing \cref{rmk:hatMVdesc} gives the following
description of the double \icat{} $\widehat{\Dop_{\mathcal{C}}}$:
\begin{itemize}
\item its objects are spaces, and its vertical morphisms are morphisms
  of spaces,
\item a horizontal morphism from $X$ to $Y$ is a functor $X \times Y
  \to \mathcal{C}$,
\item the composite of the horizontal morphisms $\Phi \colon X \times
  Y \to \mathcal{C}$ and $\Psi \colon Y \times Z \to \mathcal{C}$ is
  the functor $X \times Z \to \mathcal{C}$ given by
  \[ (x,z) \mapsto \colim_{y \in Y} \Phi(x,y) \otimes \Psi(y,z).\]
\end{itemize}
From \cref{cor:framedalgfib} we know that the restriction
$\Alg_{\Dop}(\widehat{\Dop_{\mathcal{C}}}) \to \mathcal{S}$ is
a cartesian fibration, with fibre at a space $X$ given by
$\Alg_{\Dop}((\widehat{\Dop_{\mathcal{C}}})(X,X)) \simeq
\Alg_{\Dop_{X}}(\mathcal{C})$. Here the \icat{} 
$(\widehat{\Dop_{\mathcal{C}}})(X,X)$ is equivalent to $\Fun(X \times
X, \mathcal{C})$, giving:
\begin{cor}
  Let $\mathcal{C}$ be a monoidal \icat{} compatible with colimits
  indexed by \igpds{}. Then there is a monoidal structure on the \icat{}
  $\Fun(X \times X, \mathcal{C})$ such that
  \begin{enumerate}[(i)]
  \item the tensor product of $F$ and $G$ is given by 
    \[(F \otimes G)(p,q) \simeq \colim_{x \in X} F(p,x) \otimes
      G(x,q),\]
  \item the unit $\bbone$ is given by 
    \[ \bbone(p,q) \simeq \Map_{X}(p,q) \otimes \bbone \simeq 
    \begin{cases}
      \emptyset, & p \not\simeq q\\
      \Omega_{p}X \otimes \bbone, & p \simeq q,
    \end{cases}
    \]
    where $\bbone$ is the unit of $\mathcal{C}$,
  \item we have
    $\Alg_{\Dop}(\Fun(X \times X, \mathcal{C})) \simeq \Alg_{\Dop_{X}}(\mathcal{C})$.
  \end{enumerate}
\end{cor}

\section{The Composition Product and $\infty$-Operads}\label{sec:appl}
In this section we apply our results on Day convolutions to describe
\iopds{} as associative algebras in double \icats{}.  We first
consider ordinary \iopds{} (in spaces) in \S\ref{subsec:opd}, and then
enriched \iopds{} in \S\ref{subsec:enropd}. We also briefly observe,
in \S\ref{subsec:bar}, that a version of the bar-cobar adjunction
between \iopds{} and \icoopds{} follows from this description of
\iopds{}.

\subsection{$\infty$-Operads as Associative
  Algebras}\label{subsec:opd}
In this subsection we will see that \iopds{} are given by
associative algebras in a double \icat{} of symmetric collections (or
coloured symmetric sequences) in $\mathcal{S}$. For this we use
Barwick's model of \iopds{} from \cite{BarwickOpCat}; this is known to
be equivalent to other models of \iopds{} thanks to the results of
\cite{BarwickOpCat,iopdcomp,CisinskiMoerdijkDendSeg,CisinskiMoerdijkSimplOpd}.
Before we recall Barwick's definition we first introduce some
notation:

\begin{defn}
  Write $\xF$ for a skeleton of the category of finite sets,
  with objects $\mathbf{k} := \{1,\ldots,k\}$, $k = 0,1,\ldots$. Let
  $\DF$ be the category with objects pairs
  $([n], f \colon [n] \to \xF)$ with a morphism
  $([n], f) \to ([m], g)$ given by a morphism
  $\phi \colon [n] \to [m]$ in $\simp$ and a natural transformation
  $\eta \colon f \to g \circ \phi$ such that
  \begin{enumerate}[(i)]
  \item the map $\eta_{i} \colon f(i) \to g(\phi(i))$ is injective for
    all $i = 0,\ldots,m$,
  \item the commutative square
    \csquare{f(i)}{g(\phi(i))}{f(j)}{g(\phi(j))}{\eta_{i}}{}{}{\eta_{j}}
    is a pullback square for all $0 \leq i \leq j \leq m$.
  \end{enumerate}
\end{defn}

\begin{notation}
  For $I = ([n], f)$ in $\DF$, we write
  $I|_{ij} := ([j-i], f|_{\{i,i+1,\ldots,j\}})$ for
  $0 \leq i < j \leq n$, and $I|_{i} := ([0], f(i))$. Moreover, for
  $x \in f(n)$ we write $I_{x} := ([n], f_{x})$ where $f_{x}$ is
  obtained by taking fibres at $x$.
\end{notation}

\begin{defn}\label{defn:DFSeg}
  A presheaf $F \colon \DF^{\op} \to \mathcal{S}$ is a
\emph{Segal operad} if it satisfies the following three ``Segal conditions'':
\begin{enumerate}[(1)]
\item for every object $I = ([n], f)$ of $\DF$, the natural map
  \[F(I) \to F(I|_{01})
  \times_{F(I|_{1})} \cdots \times_{F(I|_{n-1})} F(I|_{(n-1)n})\]
is an equivalence,
\item for every object $I = ([1], \mathbf{k} \to \mathbf{l})$, the natural map
\[ F(I) \to \prod_{x \in \mathbf{l}} F(I_{x})\] is an
equivalence,
\item for every object $I = ([0], \mathbf{k})$, the natural map
  \[ F(I) \to \prod_{x \in \mathbf{k}} F(I_{x})\]
  is an equivalence.
\end{enumerate}
We write $\Seg_{\DFop}^{\opd}(\mathcal{S})$ for the full subcategory of $\mathcal{P}(\DF)$
spanned by the Segal operads.
\end{defn}
\begin{remark}
  In the presence of condition (1), conditions (2) and (3) can be
  replaced by the following more general version:
  \begin{quotation}
    For every object $I = ([n], f)$ of $\DF$, the natural map
    \[ F(I) \to \prod_{x \in f(n)} F(I_{x})\]
    is an equivalence.
  \end{quotation}
\end{remark}

Segal presheaves on $\DF$ describe the algebraic structure of
\iopds{}: If we write $\mathfrak{e} := ([0], \mathbf{1})$ and
$\mathfrak{c}_{n} := ([1], \mathbf{n} \to \mathbf{1})$, then the Segal
conditions describe how $F(I)$ decomposes as a limit of
$F(\mathfrak{e})$ and $F(\mathfrak{c}_{n})$. We can think of an
objects of $\DF$ as a forest with levels; then $\mathfrak{e}$
correponds to a plain edge and $\mathfrak{c}_{n}$ to a corolla with
$n$ leaves, while the Segal condition corresponds to the decomposition
of a forest into its edges and corollas. If $F$ is viewed as an
\iopd{}, the value $F(\mathfrak{e})$ is the space of objects of $F$,
while $F(\mathfrak{c}_{n})$ is the space of $n$-ary morphisms.

The following is the  starting point for our construction of the composition product on
symmetric sequences:
\begin{propn}\label{propn:DFadm}
  The projection $\DF^{\op} \to \Dop$ is an $\mathcal{X}$-admissible
  double \icat{} for any cocomplete \icat{} $\mathcal{X}$ with
  pullbacks where colimits are universal.
\end{propn}

For the proof we need some notation and a lemma:
\begin{defn}
  Suppose $\phi \colon [n] \to [m]$ is an active map in $\simp$ and 
  $A = (\mathbf{a}_{0} \to \cdots \to \mathbf{a}_{n})$ is an object of
  $(\DF)_{[n]}$. An object of the slice $(\DF)_{[m], A/}$, defined
  using $\phi$, is an object $(\mathbf{b}_{0} \to \cdots
  \to \mathbf{b}_{m})$ of $(\DF)_{[m]}$ together with injective maps
  $\mathbf{a}_{i} \to \mathbf{b}_{\phi(i)}$ such that the squares
  \nolabelcsquare{\mathbf{a}_i}{\mathbf{a}_{i+1}}{\mathbf{b}_{\phi(i)}}{\mathbf{b}_{\phi(i+1)}}
  are cartesian. Let $(\DF)_{[m], A/}^{\txt{iso}}$ denote the full
  subcategory of $(\DF)_{[m], A/}$ containing those objects where the
  map $\mathbf{a}_{n} \to \mathbf{b}_{\phi(m)}$ is an
  isomorphism.
\end{defn}

\begin{lemma}\label{lem:DFisocinit}
  Let $\phi \colon [n] \to [m]$ be an active morphism in $\simp$, and
  let $A = (\mathbf{a}_{0} \to \cdots \to \mathbf{a}_{n})$ be an
  object of $(\DF)_{[n]}$. For $0 \leq i \leq j \leq n$ let $A_{ij} :=
  (\mathbf{a}_{i} \to \mathbf{a}_{i+1} \to \cdots \to \mathbf{a}_{j})$.
  \begin{enumerate}[(i)]
  \item The inclusion
    $(\DF)_{[m],A/}^{\txt{iso}} \hookrightarrow (\DF)_{[m], A/}$ is
    coinitial.
  \item For every $k$, $0\leq k \leq n$, the functor $(\DF)_{[m],A/}^{\txt{iso}} \to
    (\DF)_{[\phi(k)],A_{0k}/}^{\txt{iso}} \times (\DF)_{[m-\phi(k)],A_{kn}/}^{\txt{iso}}$
    is an equivalence.
  \end{enumerate}
\end{lemma}
\begin{proof}
  By \cite{HTT}*{Theorem 4.1.3.1} for part (i) it suffices to check that for all
  $B \in (\DF)_{[m], A/}$ the category
  $((\DF)_{[m], A/}^{\txt{iso}})_{/B}$ is weakly contractible.
  Observe that the projection $(\DF)_{[m]} \to \xF_{\txt{inj}}$
  given by evaluation at $[m]$ is a cartesian fibration, where
  $\xF_{\txt{inj}}$ denotes the subcategory of $\xF$
  containing only the injective maps.  The category
  $((\DF)_{[m], A/}^{\txt{iso}})_{/B}$ therefore has a terminal
  object, given by the cartesian morphism $B' \to B$ over the map
  $A_{n} \to B_{\phi(m)}$, which implies that it is weakly
  contractible. (Indeed this is the unique object of $((\DF)_{[m],
    A/}^{\txt{iso}})_{/B}$, which is actually a contractible \igpd{}.)
 Part (ii) is immediate from the definition.
\end{proof}

\begin{proof}[Proof of \cref{propn:DFadm}]
  The functor $\DF \to \simp$ is a cartesian fibration, and the
  corresponding functor $\Dop \to \CatI$ takes $[n]$ to the category
  $(\DF)_{[n]}$ where
  \begin{itemize}
  \item an object is a sequence $\mathbf{a}_{0} \to \cdots \to
    \mathbf{a}_{n}$ of morphisms in $\xF$,
  \item a morphism is a commutative diagram
    \[
    \begin{tikzcd}
    \mathbf{a}_{0} \arrow{d} \arrow{r} \arrow[phantom]{dr}[very near
start]{\lrcorner} & \mathbf{a}_{1} \arrow{r} \arrow{d} \arrow[phantom]{dr}[very near
start]{\lrcorner} &
    \cdots \arrow{r} & \mathbf{a}_{n-1} \arrow{d} \arrow{r}\arrow[phantom]{dr}[very near
start]{\lrcorner} &
    \mathbf{a}_{n} \arrow{d} \\
    \mathbf{b}_{0} \arrow{r} & \mathbf{b}_{1} \arrow{r}  &
    \cdots \arrow{r} & \mathbf{b}_{n-1} \arrow{r} &
    \mathbf{b}_{n}
    \end{tikzcd}
    \]
    where the squares are cartesian and the maps $\mathbf{a}_{i} \to
    \mathbf{b}_{i}$ are injective.
  \end{itemize}
  This clearly satisfies the Segal condition, \ie{}
  $(\DF)_{[n]} \simeq (\DF)_{[1]} \times_{(\DF)_{[0]}} \cdots
  \times_{(\DF)_{[0]}} (\DF)_{[1]}$. It follows that $\DF^{\op} \to
  \Dop$ is a cocartesian fibration corresponding to the functor
  $\Dop \to \CatI$ taking $[n]$ to $(\DF)_{[n]}^{\op}$ and so is also
  a double \icat{}.
  
  Suppose $\phi \colon [n] \to [m]$ is an active map in $\simp$. If
  $A = (\mathbf{a}_{0} \to \cdots \to \mathbf{a}_{n})$ and
  $A' = (\mathbf{a}_{0} \to \cdots \to \mathbf{a}_{k})$ and $A'' =
  (\mathbf{a}_{k} \to \cdots \to \mathbf{a}_{n})$ then we must
  show that the natural map
  \[ \colim_{((\DF)_{[m],A/})^{\op}} F \to
    \colim_{((\DF)_{[\phi(k)],A'/})^{\op}} F
    \times_{\colim_{((\DF)_{[0],\mathbf{a}_{k}/})^{\op}} F} 
    \colim_{((\DF)_{[n-\phi(k)],A''/})^{\op}} F\]
  is an equivalence for any appropriate functor $F$.
  
  We have a commutative square
  \[
    \begin{tikzcd}
      \colim_{((\DF)^{\txt{iso}}_{[m],A/})^{\op}} F \arrow{r} \arrow{d}{\sim}& 
      \colim_{((\DF)^{\txt{iso}}_{[\phi(k)],A'/})^{\op}} F
      \times_{\colim_{((\DF)^{\txt{iso}}_{[0],\mathbf{a}_{k}/})^{\op}} F} 
      \colim_{((\DF)^{\txt{iso}}_{[n-\phi(k)],A''/})^{\op}} F \arrow{d}{\sim} \\     
      \colim_{((\DF)_{[m],A/})^{\op}} F \arrow{r} & 
      \colim_{((\DF)_{[\phi(k)],A'/})^{\op}} F
      \times_{\colim_{((\DF)_{[0],\mathbf{a}_{k}/})^{\op}} F} 
      \colim_{((\DF)_{[n-\phi(k)],A''/})^{\op}} F,
    \end{tikzcd}
  \]
  where the vertical maps are equivalences by \cref{lem:DFisocinit}(i). To
  see that the bottom horizontal map is an equivalence it hence suffices
  to show the top horizontal map is an equivalence.
  
  Here $(\DF)^{\txt{iso}}_{[0],\mathbf{a}_{k}/}$ is contractible,
  since it only contains the identity map of $\mathbf{a}_{k}$, while 
  the functor $(\DF)^{\txt{iso}}_{[m],A/} \to
  (\DF)^{\txt{iso}}_{[\phi(k)],A'/} \times
  (\DF)^{\txt{iso}}_{[n-\phi(k)],A''/}$ is an equivalence by \cref{lem:DFisocinit}(ii). Thus the
  top horizontal functor is
    \begin{multline*}
\colim_{(X,Y) \in ((\DF)^{\txt{iso}}_{[\phi(k)],A'/})^{\op} \times
      ((\DF)^{\txt{iso}}_{[n-\phi(k)],A''/})^{\op}} F(X)
    \times_{F([0],\mathbf{a}_{k})} F(Y)\\
     \to \colim_{X \in
      ((\DF)^{\txt{iso}}_{[\phi(k)],A'/})^{\op}} F(X)
    \times_{F([0],\mathbf{a}_{k})} \colim_{Y \in
      ((\DF)^{\txt{iso}}_{[n-\phi(k)],A''/})^{\op}} F(Y), 
  \end{multline*}
 which is an
  equivalence since colimits in $\mathcal{X}$ are universal.
\end{proof}

\begin{notation}
  Given a morphism $f \colon \mathbf{a} \to \mathbf{b}$ of finite
  sets, we write $\txt{Fact}(f)$ for the groupoid
  $((\DF)^{\txt{iso}}_{[2],\mathbf{a}\to\mathbf{b}/})^{\op}$ of
  factorizations of $f$.
\end{notation}

Applying \cref{propn:dayconvdouble} and \cref{cor:Meq} we get:
\begin{cor}\label{cor:DFDayCon}
  There is a double \icat{} $\hDFSop$ with the universal property that
  for any \gnsiopd{} $\mathcal{O}$ there is a natural equivalence
  \[ \Alg_{\mathcal{O}}(\hDFSop) \simeq \Seg_{\mathcal{O} \times_{\Dop}
      \DF^{\op}}(\mathcal{S}).\]
  In particular, $\Alg_{\Dop}(\hDFSop) \simeq
  \Seg_{\DF^{\op}}(\mathcal{S})$.
\end{cor}
Here $\Seg_{\DF^{\op}}(\mathcal{S})$ is the \icat{} of presheaves on
$\DF$ that satisfy condition (1) in \cref{defn:DFSeg}. The double
\icat{} $\hDFSop$ can be described as follows:
\begin{itemize}
\item Objects are functors $\Finj^{\op} \simeq \simp_{\xF,[0]}^{\op}
  \to \mathcal{S}$,
\item Vertical morphisms are natural transformations of such functors.
\item A horizontal morphism $\Phi$ from $F$ to $G \colon \Finj^{\op} \to
  \mathcal{S}$ assigns to $([1], \mathbf{a} \to \mathbf{b})$ a span
  \[ F(\mathbf{a}) \from \Phi(\mathbf{a} \to \mathbf{b}) \to
    G(\mathbf{b}).\]
\item Squares are natural transformations of such diagrams.
\item If $\Phi$ is a horizontal morphism from $F$ to $G$ and $\Psi$ is
  a horizontal morphism from $G$ to $H$ then their composite assigns
  to $([1], \mathbf{a} \xto{f} \mathbf{b})$ the space over $F(\mathbf{a})$
  and $H(\mathbf{b})$ given by
  \[ \colim_{\mathbf{a} \to \mathbf{x} \to \mathbf{b} \in \Fact(f)} \Phi(\mathbf{a}
    \to \mathbf{x}) \times_{G(\mathbf{x})} \Psi(\mathbf{x} \to
    \mathbf{b}).\]
\end{itemize}

\begin{defn}
  Let $\COLL(\mathcal{S})$ denote the full subcategory of
  $\hDFSop$ spanned by the functors $(\bbS\DF^{\op})_{n} \to
  \mathcal{S}$ such that their inert restrictions to
  $(\bbS\DF^{\op})_{0} \simeq \Finj^{\op}$ take coproducts of finite
  sets to products, and their inert restrictions to
  $(\bbS\DF^{\op})_{1}$ moreover satisfy condition (2) in
  \cref{defn:DFSeg} when restricted to $(\DF^{\op})_{[1]}$.
\end{defn}

\begin{lemma}\label{lem:COLLSdouble}
  $\COLL(\mathcal{S})$ is a sub-double \icat{} of $\hDFSop$.
\end{lemma}
\begin{proof}
  It follows from Lemma~\ref{lem:subgnsiopd} that $\COLL(\mathcal{S})$
  is a \gnsiopd{}, so it only remains to check that the cocartesian
  morphisms restrict to $\COLL(\mathcal{S})$. To see this it suffices
  to check that the horizontal morphisms in $\COLL(\mathcal{S})$ are
  closed under composition. If $\Phi$ is a horizontal morphism from
  $F$ to $G$ and $\Psi$ is one from $G$ to $H$, and both lie in
  $\COLL(\mathcal{S})$, then we have
  \[
    \begin{split}
      (\Phi \odot_{G} \Psi)(\mathbf{a} \to \mathbf{b}) & \simeq
      \colim_{\mathbf{a} \to \mathbf{x} \to \mathbf{b}}
      \Phi(\mathbf{a} \to \mathbf{x}) \times_{G(\mathbf{x})}
      \Psi(\mathbf{x} \to \mathbf{b}) \\
      & \simeq \colim_{(\mathbf{a}_{i} \to \mathbf{x}_{i} \to
        \mathbf{1}) \in \prod_{i \in \mathbf{b}} \Fact(\mathbf{a}_{i}
        \to \mathbf{1})} \left( \prod_{i \in \mathbf{b}}
        \Phi(\mathbf{a}_{i} \to \mathbf{x}_{i})\right ) \times_{ \left(
          \prod_{i \in \mathbf{b}} G(\mathbf{x}_{i}) \right)}
      \left( \prod_{i \in \mathbf{b}}
        \Psi(\mathbf{x}_{i} \to \mathbf{1})\right ) \\
      & \simeq \colim_{(\mathbf{a}_{i} \to \mathbf{x}_{i} \to
        \mathbf{1}) \in \prod_{i \in \mathbf{b}} \Fact(\mathbf{a}_{i}
        \to \mathbf{1})} \prod_{i \in \mathbf{b}} \Phi(\mathbf{a}_{i}
      \to \mathbf{x}_{i}) \times_{G(\mathbf{x}_{i})}
      \Psi(\mathbf{x}_{i} \to \mathbf{1}) \\
      & \simeq \prod_{i \in \mathbf{b}} 
\colim_{\mathbf{a}_{i} \to \mathbf{x}_{i} \to
        \mathbf{1} \in  \Fact(\mathbf{a}_{i}
        \to \mathbf{1})} \Phi(\mathbf{a}_{i}
      \to \mathbf{x}_{i}) \times_{G(\mathbf{x}_{i})}
      \Psi(\mathbf{x}_{i} \to \mathbf{1}) \\
      & \simeq \prod_{i \in \mathbf{b}} (\Phi \odot_{G}
      \Psi)(\mathbf{a}_{i} \to \mathbf{1}),
    \end{split}
  \]
  \ie{} $\Phi \odot_{G}\Psi$ also lies in $\COLL(\mathcal{S})$, as required.
\end{proof}

The double \icat{} $\COLL(\mathcal{S})$ can be described as follows:
\begin{itemize}
\item Its objects can be identified with spaces (since functors $\Finj^{\op} \to
  \mathcal{S}$ in $\COLL(\mathcal{S})_{0}$ are determined by their
  value at $\mathbf{1}$).
\item Its vertical morphisms are maps of spaces.
\item A horizontal morphism $\Phi$ from $X$ to $Y$ is determined by assigning to $([1],
  \mathbf{n} \to \mathbf{1})$ a span
  \[ X^{\times n} \from \Phi(n) \to Y,\]
  where $\Phi(n)$ has a $\Sigma_{n}$-action compatible with permuting
  the factors of $X^{\times n}$.
\item A square is a natural transformation of such diagrams.
\item If $\Phi$ is a horizontal morphism from $X$ to $Y$ and $\Psi$ is
  one from $Y$ to $Z$, then their composite assigns to $([1],
  \mathbf{n} \to \mathbf{1})$ the space over $X^{\times n} \times Z$
  given by
  \[ \colim_{\mathbf{n} \to \mathbf{m} \to \mathbf{1}} \Phi(\mathbf{n}
    \to \mathbf{m}) \times_{Y^{\times m}} \Psi(m),\]
  where $\Phi(\mathbf{n} \to \mathbf{m}) \simeq \prod_{i = 1}^{m} \Phi(n_{i})$.
\end{itemize}

Restricting \cref{cor:DFDayCon} to $\COLL(\mathcal{S})$, we get:
\begin{cor}
  There is an equivalence $\Alg_{\Dop}(\COLL(\mathcal{S})) \simeq
  \Seg^{\opd}_{\DFop}(\mathcal{S})$.\qed
\end{cor}
In other words, \iopds{} can be described as associative algebras in
the double \icat{} $\COLL(\mathcal{S})$. Moreover, applying
\cref{cor:dayconvmonalg} we see that the restriction
$\Alg_{\Dop}(\COLL(\mathcal{S})) \to \mathcal{S}$ is a cartesian
fibration. If we write
\[ \Coll_{X}(\mathcal{S}) := \COLL(\mathcal{S})(X,X) \]
for the \icat{} of $X$-collections in $\mathcal{S}$, the fibre at
 $X \in \mathcal{S}$ is given by
$\Alg_{\Dop}(\Coll_{X}(\mathcal{S}))$. To describe the monoidal
structure on $\Coll_{X}(\mathcal{S})$ we first need to introduce some notation:

\begin{notation}\label{not:xFXY}
  Let $\xF^{\simeq}$ denote the maximal subgroupoid of $\xF$,
  and write $j \colon \xFeq \to (\DF^{\op})_{[1]}$ for the fully
  faithful functor taking $\mathbf{n}$ to $\mathbf{n} \to
  \mathbf{1}$. Given $X \in \mathcal{S}$,  we write
  $(\DF^{\op})_{[0],X} \to (\DF^{\op})_{[0]}$ for the 
  left fibration corresponding to the
  unique product-preserving functor
  $(\DF^{\op})_{[0]} \simeq \Finj^{\op} \to \mathcal{S}$  that takes $\mathbf{1}$ to $X$.
  Moreover, for $X,Y \in \mathcal{S}$ let $(\DF^{\op})_{[1],X,Y}$
  denote the pullback $(\DF^{\op})_{[1]}
  \times_{(\DF^{\op})_{[0]}^{\times 2}} \left((\DF^{\op})_{[0],X}
    \times (\DF^{\op})_{[0],Y}\right)$.
  If we define $\xFeq_{X} := \coprod_{n=0}^{\times} X^{\times
    n}_{h\Sigma_{n}}$ to be the free commutative monoid on the \igpd{} $X$, then
  we have a pullback
  \[
    \begin{tikzcd}
      \xFeq_{X}\times Y \arrow{r}{j_{X,Y}} \arrow{d} & (\DF^{\op})_{[1],X,Y}
      \arrow{d} \\
      \xF^{\simeq} \arrow{r}{j} & (\DF^{\op})_{[1]}.
    \end{tikzcd}
  \]
\end{notation}

\begin{lemma}\label{lem:COLLSXY}
  For any $X \in \mathcal{S}$, the \icat{} 
  $\COLL(\mathcal{S})(X,Y)$ of horizontal morphisms from $X$ to $Y$ is
  equivalent to the full subcategory of
  $\Fun((\DF^{\op})_{[1],X,Y}, \mathcal{S})$ spanned by functors that are
  right Kan extensions along $j_{X,Y}$, so that
  \[ \COLL(\mathcal{S})(X,Y) \simeq \Fun(\xF_{X}^{\simeq} \times Y,
    \mathcal{S}).\]
\end{lemma}
\begin{proof}
  By \cref{propn:M1FG} we may identify $\hDFSop(F,G)$ with
  $\Fun((\DF^{\op})_{[1],F,G}, \mathcal{S})$ for any functors $F,G
  \colon (\DF^{\op})_{[0]} \to \mathcal{S}$. For the objects that lie
  in $\COLL(\mathcal{S})$ these are functors $(\DF^{\op})_{[1],X,Y}
  \to \mathcal{S}$, and under this identification it is easy to see
  that the functors that lie in $\COLL(\mathcal{S})(X,Y)$ are
  precisely those that are right Kan extended from
  $\xFeq_{X}\times Y$.
\end{proof}

\begin{remark}
  In particular, we may identify the \icat{} $\Coll_{*}(\mathcal{S})$
  of horizontal endomorphisms of the point with the \icat{}
  $\Fun(\xF^{\simeq}, \mathcal{S})$ of symmetric sequences in
  $\mathcal{S}$. More generally, the \icat{} $\Coll_{X}(\mathcal{S})$ is
  equivalent to $\Fun(\xF^{\simeq}_{X} \times X, \mathcal{S})$, the \icat{} of
 $X$-collections, or $X$-coloured symmetric sequences, in
  $\mathcal{S}$.
\end{remark}

\begin{notation}
  For a functor $F \colon \xFeq_{X}\times Y \to \mathcal{C}$, we will denote
  its value at $((x_{1},\ldots,x_{n}), y) \in X^{\times n}_{h
    \Sigma_{n}}\times Y$ by $F\binom{x_{1},\ldots,x_{n}}{y}$.
\end{notation}

\begin{cor}\label{cor:symmseqprodX}
  The \icat{} $\Fun(\xFeq_{X} \times X, \mathcal{S})$ has a monoidal
  structure such that
  \begin{enumerate}[(i)]
  \item the tensor product of $F$ and $G$ is given by
    \[(F \circ G)\binom{x_{1},\ldots,x_{n}}{z}  \simeq 
      \colim_{\substack{\mathbf{n} \to \mathbf{m} \to
          \mathbf{1}\\(y_{i}) \in X^{m}}}
        \prod_{i \in
      \mathbf{m}} F\binom{x_{k}: k \in \mathbf{n}_{i}}{y_{i}}
     \times G\binom{y_{1},\ldots,y_{m}}{z},\]
where the colimit is over $((\DFX)_{[2],
        (\mathbf{n} \to \mathbf{1}, (x_{i}), z)/}^{\txt{iso}})^{\op}$,
  \item the unit $\bbone_{X}$ is given by
    \[ \bbone_{X}\binom{x_{1},\ldots,x_{n}}{y} \simeq
    \begin{cases}
      \emptyset, & \mathbf{n} \not\cong \mathbf{1},\\
      \Map_{X}(x_{1},y), & \mathbf{n} \cong \mathbf{1},
    \end{cases}
    \]
  \item we have
    $\Alg_{\Dop}(\Fun(\xFeq_{X} \times X, \mathcal{S})) \cong
    \Seg^{\opd}_{\DFop}(\mathcal{S})_{X}$.
  \end{enumerate}
\end{cor}

\begin{remark}
  In particular, the \icat{} $\Fun(\xF^{\simeq}, \mathcal{S})$ of
  symmetric sequences has a monoidal structure with tensor product
  given by
      \[(F \circ G)(\mathbf{n}) \simeq 
\colim_{(\mathbf{n} \to \mathbf{m} \to \mathbf{1}) \in
  \Fact(\mathbf{n} \to \mathbf{1})} \prod_{i \in
      \mathbf{m}} F(\mathbf{n}_{i}) \times G(\mathbf{m}).\]
  This formula is easily seen to agree with the usual
  formula for the composition product of symmetric sequences by
  expanding out $\Fact(\mathbf{n} \to \mathbf{1})$ as a coproduct of
  its components, \cf{}
  \cite{DwyerHess}*{Lemma A.4}.
\end{remark}

\begin{remark}
  In \cite{BarwickOpCat}, Barwick defines $\Phi$-\iopds{} for
  \emph{operator categories} $\Phi$ as Segal presheaves on categories
  $\simp_{\Phi}$, of which $\simp_{\xF}$ is a special case. The
  proof of \cref{cor:symmseqprodX} works for any operator
  category, giving a monoidal
  structure on the \icat{} $\Fun(\Phi^{\simeq}_{X}, \mathcal{S})$ of
  ``$X$-coloured $\Phi$-symmetric sequences'' where associative algebras are
  $\Phi$-\iopds{} with $X$ as their space of objects. In particular,
  replacing $\xF$ with the category $\mathbb{O}$ of ordered finite
  sets we obtain the analogous results for non-symmetric \iopds{}.
\end{remark}

\subsection{Enriched  $\infty$-Operads as Associative Algebras}\label{subsec:enropd}
In this subsection we extend the results of the previous subsection to
\iopds{} enriched in a symmetric monoidal \icat{}. The starting point
is the following analogue of \cref{propn:enrDCgpd} for
$\DFop$-monoidal \icats{}:
\begin{propn}
  Let $\mathcal{U}^{\otimes}$ be a $\DFop$-monoidal \icat{} that is
  compatible with colimits indexed by \igpds{}. Then
  $\hDFUop$ is a framed double \icat{}. If
  $\mathcal{U}^{\otimes}\to \mathcal{V}^{\otimes}$ is a
  $\DFop$-monoidal functor between such $\DFop$-monoidal \icats{} such
  that each functor $\mathcal{U}_{X} \to \mathcal{V}_{X}$ for
  $X \in (\DF)_{[1]}$ preserves colimits indexed by \igpds{}, then the
  natural morphism of \gnsiopds{}
  $\hDFUop \to \hDFVop$
  preserves cocartesian morphisms.
\end{propn}
\begin{proof}
  Follows as in the proof of \cref{propn:enrDCgpd}, using
  \cref{lem:DFisocinit} to restrict to colimits indexed by \igpds{}.
\end{proof}
We now recall some definitions from
\cite{enropd}; we refer the reader there for motivation for these
definitions.
\begin{defn}
  Let $V \colon \DF^{\op} \to \xF_{*}$ be the functor of
  \cite{enropd}*{Definition 2.2.11}, taking
  $([n], \mathbf{a}_{0} \to \cdots \to \mathbf{a}_{n})$ to
  $(\coprod_{i = 1}^{n} \mathbf{a}_{i})_{+}$, and a morphism
  $([n], \mathbf{a}_{0}\to \cdots \to \mathbf{a}_{n})\to ([m],
  \mathbf{b}_{0} \to \cdots \to \mathbf{b}_{m})$ over
  $\phi \colon [n] \to [m]$ in $\Dop$ to the map
  $(\coprod_{i=1}^m \mathbf{a}_{i})_{+} \to (\coprod_{j=1}^n
  \mathbf{b}_{j})_{+}$ given on the component $\mathbf{a}_{i}$ by the
  map $\mathbf{a}_{i} \to (\coprod_{j=1}^n \mathbf{b}_{j})_{+}$ taking
  $x \in \mathbf{a}_{i}$ to an object $y \in \mathbf{b}_{j}$ if
  $\phi(j-1)< i \leq \phi(j)$ and the map
  $\mathbf{a}_{i} \to \mathbf{a}_{\phi(j)}$ takes $x$ to the image of
  $y$ under the map $\mathbf{b}_{j} \to \mathbf{a}_{\phi(j)}$, and to
  the base point $*$ otherwise. The functor $V$ assigns to a forest
  its set of vertices with an added basepoint. Note that $V$ assigns
  every morphism in $\DFop$ that lies over an identity morphism in
  $\Dop$ to an inert morphism in $\xF_{*}$.
\end{defn}

\begin{defn}
  If $\mathcal{V}^{\otimes} \to \xF_{*}$
  is a symmetric monoidal \icat{}, and $\mathcal{V}_{\otimes} \to
  \xF_{*}^{\op}$ is the corresponding cartesian fibration, then we define the
  \icat{} $\DFV$ by the pullback square
  \csquare{\DFV}{\mathcal{V}_{\otimes}}{\DF}{\xF_{*}^{\op}.}{}{}{}{V^{\op}}
  Note that $V
  \colon \DF^{\op} \to \xF_{*}$ satisfies
  \[ V([n], \mathbf{a}_{0} \to
    \cdots \to \mathbf{a}_{n}) \cong V([1], \mathbf{a}_{0} \to
    \mathbf{a}_{1}) + \cdots + V([1], \mathbf{a}_{n-1} \to
    \mathbf{a}_{n}),\]
  which implies that $\DFVop$ is a $\DF$-monoidal \icat{}.
\end{defn}

\begin{remark}
  Let $\mathcal{V}$ be a symmetric monoidal \icat{} compatible with
  colimits indexed by \igpds{}. The double \icat{}
  $\widehat{\simp}^{\op}_{\xF,\DFVop}$ can be described as follows:
  \begin{itemize}
  \item The objects are functors $(\DF)_{[0]}^{\op} \to \mathcal{S}$,
    and the vertical morphisms are natural transformations of such
    functors.
  \item A horizontal morphism from $F$ to $G$ is a functor
    \[ \Phi \colon  (\DF)_{[1],F,G}^{\op} \to (\DF)_{[1]}^{\op} \times_{\xF_{*}}
      \mathcal{V}^{\otimes} \]
    over $(\DF)_{[1]}^{\op}$. This thus assigns to an object $(\mathbf{n}
    \xto{f} \mathbf{m}, p \in F(\mathbf{n}), q \in G(\mathbf{m}))$ an
    object $(\Phi(f,p,q)_{i})_{i\in \mathbf{m}}$ of $\mathcal{V}^{\times |\mathbf{m}|}$.
  \item If $\Phi$ is a horizontal morphism from $F$ to $G$ and $\Psi$
    is one from $G$ to $H$, then their composite is the functor from
    $(\DF)_{[1],F,H}^{\op}$ given by
    \[ (\mathbf{n}
      \xto{f} \mathbf{m}, p \in F(\mathbf{n}), q \in H(\mathbf{m}))
      \mapsto \left( \colim_{(\mathbf{n} \xto{g} \mathbf{x} \xto{h} \mathbf{m}) \in
        \Fact(f)} \colim_{t \in G(\mathbf{x})} \bigotimes_{j \in
        h^{-1}(i)} \Phi(g,p,t)_{j}\otimes \Psi(h,t,q)_{i} \right)_{i
      \in \mathbf{m}}
    \]
  \end{itemize}
\end{remark}

\begin{defn}
  Let $\mathcal{V}$ be a symmetric monoidal \icat{} compatible with
  colimits indexed by \igpds{}. We denote by
  $\COLL(\mathcal{V})$ the full subcategory of
  $\widehat{\simp}^{\op}_{\xF,\DFVop}$ spanned by the objects
  \begin{enumerate}[(1)]
  \item   whose
  inert restrictions to $[0]$ are given by functors
  $(\DF)_{[0]}^{\op} \simeq \Finj^{\op} \to \mathcal{S}$
  that take coproducts of finite sets to products,
\item whose inert restrictions to $[1]$ correspond to functors
  \[(\DF)_{[1],F,G}^{\op} \to (\DF)_{[1]}^{\op} \times_{\xF_{*}}
    \mathcal{V}^{\otimes} \]
  that send all morphisms to cocartesian morphisms in the target.
\end{enumerate}
This is a sub-double \icat{} of $\widehat{\simp}^{\op}_{\xF,\DFVop}$
by a variant of the proof of \cref{lem:COLLSdouble}.
\end{defn}

\begin{remark}
  A functor $F \colon \Finj^{\op} \to \mathcal{S}$ that satisfies
  condition (1) is the right Kan extension of its restriction to the
  object $\mathbf{1}$. Thus the objects of $\COLL(\mathcal{V})$ can
  equivalently be described as spaces. Since the restriction of the
  functor $V$ to $(\DF)^{\op}_{[1]}$ sends all
  morphisms to inert morphisms in $\xF_{*}$, the functor
  $(\DF)^{\op}_{[1]} \to \CatI$ corresponding to the cocartesian
  fibration $(\DF)_{[1]}^{\op} \times_{\xF_{*}}
  \mathcal{V}^{\otimes}$ is also a right Kan extension of its restriction
  to the full subcategory $\xFeq$, and this restriction is the constant
  functor with value $\mathcal{V}$. Thus a horizontal morphism from
  $X$ to $Y$ in $\COLL(\mathcal{V})$ is uniquely determined by its
  restriction to a functor $\xFeq_{X}\times Y \to \mathcal{V}$.
\end{remark}

\begin{notation}
  We say a morphism in $\DFop$ is \emph{operadic inert} if it lies
  over an inert morphism in $\Dop$. 
  Let $\Alg^{\opd}_{\DFXop}(\mathcal{V})$ denote the full subcategory
  of $\Alg_{\DFXop/\DFop}(V^{*}\mathcal{V})$ spanned by morphisms that
  take operadic inert morphisms to cocartesian morphisms; we call such
  objects \emph{operadic $\DFXop$-algebras}.
  We then write $\Algd_{\DFop}^{\opd}(\mathcal{V})$ for the full
  subcategory of $\Algd_{\DFop}(\mathcal{V})$ corresponding to
  operadic algebras for all $\DFXop$, $X \in \mathcal{S}$. Similarly,
  we can define operadic algebras and algebroids for $\mathcal{O}
  \times_{\Dop} \DFop$ where $\mathcal{O}$ is any \gnsiopd{}, by
  taking the operadic inert morphisms in $\mathcal{O} \times_{\Dop}
  \DFop$ to be those that lie over inert morphisms in $\mathcal{O}$
  and operadic inert morphisms in $\DFop$.
\end{notation}

Restricting the equivalence from \cref{propn:enrDCopd} to
$\COLL(\mathcal{V})$, we get:
\begin{cor}
  Let $\mathcal{V}$ be a symmetric monoidal \icat{} compatible with
  colimits indexed by \igpds{}. There is a framed double \icat{}
  $\COLL(\mathcal{V})$ where:
  \begin{itemize}
  \item objects are spaces and vertical morphisms are morphisms of
    spaces,
  \item horizontal morphisms from $X$ to $Y$ are functors
    $\xFeq_{X}\times Y
    \to \mathcal{V}$,
  \item if $\Phi \colon \xFeq_{X}\times Y \to \mathcal{V}$ and $\Psi \colon
    \xFeq_{Y}\times Z \to \mathcal{V}$ are two horizontal morphisms, their
    composite is the functor $\xFeq_{X}\times Z \to \mathcal{V}$ given by
    \[ \binom{x_{1},\ldots,x_{n}}{z}  \mapsto
      \colim_{\mathbf{n} \xto{f} \mathbf{m} \to \mathbf{1}}
      \colim_{(y_{j}) \in Y^{m}}  \bigotimes_{j} \Phi\binom{x_{i} : i \in
        f^{-1}(j)}{y_{j}} \otimes \Psi\binom{y_{1},\ldots,y_{m}}{z}.\]
  \end{itemize}
  We have $\Alg_{\mathcal{O}}(\COLL(\mathcal{V})) \simeq
  \Algd_{\mathcal{O} \times_{\Dop}
    \DFop}^{\txt{opd}}(\mathcal{V})$. Moreover, if $\phi \colon
  \mathcal{V} \to \mathcal{W}$ is a symmetric monoidal functor that
  preserves colimits indexed by \igpds{}, then $\phi$ induces a
  morphism of double \icats{} $\COLL(\mathcal{V}) \to
  \COLL(\mathcal{W})$ given on horizontal morphisms by composition
  with $\phi$.
\end{cor}

Let $\Coll_{X}(\mathcal{V}) := \COLL(\mathcal{V})(X,X)$; then 
 $\Coll_{X}(\mathcal{V})$ is equivalent to the
\icat{} $\Fun(\xFeq_{X} \times X, \mathcal{V})$ of symmetric $X$-collections in $\mathcal{V}$. The monoidal structure on $\Coll_{X}(\mathcal{V})$ has
the following description:
\begin{cor}\label{cor:Vcompprod}
  Let $\mathcal{V}$ be a symmetric monoidal \icat{} compatible with
  colimits indexed by \igpds{}. The
  \icat{} $\Fun(\xFeq_{X} \times X , \mathcal{V})$ has a monoidal
  structure such that
  \begin{enumerate}[(i)]
  \item the tensor product of $F$ and $G$ is given by
        \[(F \circ G)\binom{x_{1},\ldots,x_{n}}{z}  \simeq 
      \colim_{\substack{\mathbf{n} \to \mathbf{m} \to
          \mathbf{1}\\y_{i} \in X, i \in \mathbf{m}}}
        \bigotimes_{i \in
      \mathbf{m}} F\binom{x_{k}: k \in \mathbf{n}_{i}}{y_{i}}
     \otimes G\binom{y_{1},\ldots,y_{k}}{z},\]
where the colimit is over $((\DFX)_{[2],
        (\mathbf{n} \to \mathbf{1}, (x_{i}), z)/}^{\txt{iso}})^{\op}$
    \item the unit $\bbone_{X}$ is given by
    \[ \bbone_{X}\binom{x_{1},\ldots,x_{n}}{y} \simeq            
    \begin{cases}
      \emptyset, & \mathbf{n} \not\cong \mathbf{1},\\
      \Map_{X}(x_{1},y) \otimes \bbone, & \mathbf{n} \cong \mathbf{1},
    \end{cases}
    \]
  \item we have
    $\Alg_{\Dop}(\Fun(\xFeq_{X} \times X, \mathcal{V})) \simeq
    \Alg_{\DFX^{\op}}(\mathcal{V})$.
  \end{enumerate}
  Moreover, if $\phi \colon \mathcal{V} \to \mathcal{W}$ is a
  symmetric monoidal functor that preserves colimits indexed by
  \igpds{}, then composition with $\phi$ gives a monoidal functor
  $\Fun(\xFeq_{X} \times X, \mathcal{V}) \to \Fun(\xFeq_{X} \times X, \mathcal{W})$.
\end{cor}

\subsection{$\infty$-Cooperads and a Bar-Cobar
  Adjunction}\label{subsec:bar}
In this subsection we will apply Lurie's bar-cobar adjunction for
associative algebras \cite{HA}*{\S 5.2.2} to obtain a version of the bar-cobar
adjunction between \iopds{} and \icoopds{} with a fixed space of
objects. We first spell out the variant of \icoopds{} that this
applies to:
\begin{defn}
  For $X \in \mathcal{S}$ and $\mathcal{V}$ a symmetric monoidal
  \icat{} compatible with colimits indexed by \igpds{}, a \emph{$\mathcal{V}$-enriched
    $\infty$-cooperad} with space of objects $X$ is a coassociative
  coalgebra in $\Fun(\xFeq_{X} \times X, \mathcal{V})$, equipped
  with the monoidal structure of \cref{cor:Vcompprod}. We write
  \[\Coopd_{X}(\mathcal{V}):= \Alg_{\Dop}(\Fun(\xFeq_{X} \times X,
  \mathcal{V})^{\op})^{\op}\]
  for the \icat{} of $\mathcal{V}$-\icoopds{} with space of objects
  $X$, and 
  \[\CoopdA_{X}(\mathcal{V}):=
  \Coopd_{X}(\mathcal{V})_{\bbone_{X}/}\]
  for the \icat{} of coaugmented $\mathcal{V}$-\icoopds{}.
  Similarly, we write \[\Opd_{X}(\mathcal{V}) :=
    \Alg_{\Dop}(\Fun(\xFeq_{X} \times X,
  \mathcal{V}))\]
  and
  \[ \OpdA_{X}(\mathcal{V}) :=
  \Opd_{X}(\mathcal{V})_{/\bbone_{X}}.\]
\end{defn}

\begin{cor}
  Let $\mathcal{V}$ be a symmetric monoidal \icat{} compatible with
  small colimits. There is an adjunction
  \[ \txt{Bar} : \txt{Opd}^{\txt{aug}}_{X}(\mathcal{V})
  \rightleftarrows \Coopd^{\txt{coaug}}_{X}(\mathcal{V}) :
  \txt{Cobar},\]
  where on underlying symmetric sequences $\txt{Bar}(\mathcal{O})$ is
  given by
  \[ 
\bbone_{X} \circ_{\mathcal{O}} \bbone_{X} \simeq \colim_{\Dop} \left(
  \begin{tikzcd}
    \bbone \arrow{r} & \mathcal{O} 
    \arrow[yshift=0.7ex]{l}   
    \arrow[yshift=-0.7ex]{l} 
    \arrow[yshift=0.7ex]{r}   
    \arrow[yshift=-0.7ex]{r}
    & \mathcal{O} \circ \mathcal{O}  \arrow{l}
    \arrow[yshift=1.4ex]{l} \arrow[yshift=-1.4ex]{l}
\arrow{r}
    \arrow[yshift=1.4ex]{r} \arrow[yshift=-1.4ex]{r}      & \cdots
    \arrow[yshift=0.7ex]{l} 
    \arrow[yshift=-0.7ex]{l} 
    \arrow[yshift=2.1ex]{l} 
    \arrow[yshift=-2.1ex]{l} 
  \end{tikzcd}
\right)
\]
  and $\txt{Cobar}(\mathcal{Q})$ is given by
\[
\lim_{\simp} \left(
  \begin{tikzcd}
    \bbone 
    \arrow[yshift=0.7ex]{r}   
    \arrow[yshift=-0.7ex]{r} 
& \mathcal{Q}
\arrow{l}
    \arrow[yshift=1.4ex]{r} \arrow[yshift=-1.4ex]{r}
\arrow{r}
    & \mathcal{Q} \circ \mathcal{Q}  
    \arrow[yshift=0.7ex]{l} 
    \arrow[yshift=-0.7ex]{l} 
    \arrow[yshift=0.7ex]{r} 
    \arrow[yshift=-0.7ex]{r} 
    \arrow[yshift=2.1ex]{r} 
    \arrow[yshift=-2.1ex]{r} 
  & \cdots
    \arrow[yshift=1.4ex]{l} \arrow[yshift=-1.4ex]{l} \arrow{l}
  \end{tikzcd}
\right).
\]
\end{cor}
\begin{proof}
  Apply \cite{HA}*{Theorem 5.2.2.17} to the monoidal \icat{}
$\Fun(\xFeq_{X} \times X, \mathcal{V})_{\bbone_{X}//\bbone_{X}}$.
\end{proof}

\begin{remark}
  Here we have defined \icoopds{} as coalgebras in symmetric sequences,
  following the definition proposed in, for instance,
  \cite{FrancisGaitsgory}. However, the notion of cooperad in 
  $\mathbf{V}$ that is relevant in bar-cobar duality for operads
  often seems to be that of operads enriched in $\mathbf{V}^{\op}$
  (as for example used by Ching to define the bar--cobar adjunction
  for operads in spectra~\cite{ChingThesis}).
  In general these two
  versions of cooperads are quite different: an $\infty$-cooperad $\mathcal{O}$
  with one object in
  our sense has a comultiplication
  $\mathcal{O} \to \mathcal{O} \circ \mathcal{O}$, which is given by
  morphisms
  \[ \mathcal{O}(n) \to \coprod_{k=0}^{\infty}
  \left(\left(\coprod_{i_{1}+\cdots+i_{k} = n}
      \txt{Ind}_{\Sigma_{i_{1}} \times \cdots \times
        \Sigma_{i_{k}}}^{\Sigma_{n}}(\mathcal{O}(i_{1}) \otimes \cdots
      \otimes \mathcal{O}(i_{k})) \right) \otimes
    \mathcal{O}(k)\right)_{h \Sigma_{k}},\]
  while an \iopd{} enriched in $\mathcal{V}^{\op}$ would be given by
  morphisms
\[ \mathcal{O}(n) \to \prod_{k=0}^{\infty}
\left(\left(\prod_{i_{1}+\cdots+i_{k} = n} \txt{CoInd}_{\Sigma_{i_{1}}
      \times \cdots \times
      \Sigma_{i_{k}}}^{\Sigma_{n}}(\mathcal{O}(i_{1}) \otimes \cdots
    \otimes \mathcal{O}(i_{k}))\right) \otimes
  \mathcal{O}(k)\right)^{h \Sigma_{k}};\]
here
$\txt{Ind}_{\Sigma_{i_{1}} \times \cdots \times
  \Sigma_{i_{k}}}^{\Sigma_{n}}$
and
$\txt{CoInd}_{\Sigma_{i_{1}} \times \cdots \times
  \Sigma_{i_{k}}}^{\Sigma_{n}}$
denote induction and coinduction, respectively, or in other words left
and right Kan extension along the functor
$B(\Sigma_{i_{1}} \times \cdots \times \Sigma_{i_{k}}) \to
B\Sigma_{n}$. 

However, if we make some assumptions on both the \iopds{} we consider
and on the \icats{} we enrich in, then the two notions do agree: First
suppose $\mathcal{V}$ is a \emph{semiadditive} \icat{}, meaning it has
a zero object and finite biproducts (\ie{} finite products and
coproducts coincide). (For example, this holds in any stable \icat{},
such as those of spectra or chain complexes.) If we then restrict
ourselves to consider only \emph{reduced} \iopds{}
$\mathcal{O} \in \Opd_{*}(\mathcal{V})$, meaning \iopds{} such that
$\mathcal{O}(0) \simeq 0$, then the coproducts in
$\mathcal{O} \circ \mathcal{O}$ are finite and hence are equivalent to
products. Moreover, for such reduced symmetric sequences we can
rewrite the formula for the composition product without taking any
homotopy orbits:\footnote{I thank Gijs Heuts for pointing this out,
  and thereby correcting a serious misconception about the bar-cobar
  adjunction in the previous version of this paper.}
\[ (\mathcal{O} \circ \mathcal{O})(n) \simeq \bigoplus_{\mathbf{n}
    \twoheadrightarrow \mathbf{k} \twoheadrightarrow *}
  \mathcal{O}(i_{1}) \otimes \cdots \otimes
  \mathcal{O}(i_{k}) \otimes \mathcal{O}(k),\]
where $i_{j} = |\mathbf{n}_{j}|$. 
This is easy to see in the coordinate-free description as discussed in
  \S\ref{subsec:overview}: passing to reduced symmetric sequences
  means only surjective maps of sets appear, and these have no
  automorphisms in $\xF^{[2],\simeq}$. 

Therefore for reduced $\mathcal{O}$ a comultiplication
$\mathcal{O} \to \mathcal{O} \circ \mathcal{O}$ is equivalently
described by $\Sigma_{n}$-equivariant maps
\[ \mathcal{O}(n) \to \mathcal{O}(i_{1}) \otimes \cdots \otimes
  \mathcal{O}(i_{k}) \otimes \mathcal{O}(k)\] where
$n = i_{1} + \cdots + i_{k}$.  This is precisely the structure of
an \iopd{} enriched in $\mathcal{V}^{\op}$ with the same $n$-ary
operations as $\mathcal{O}$.

For reduced \iopds{} enriched in semiadditive \icats{}, we therefore expect
that the bar--cobar adjunction arising from the composition product is
the correct one for understanding bar--cobar duality for enriched \iopds{}. One might wonder
if there exists some more general version of a bar-cobar adjunction
without these restrictions, but this setting does in fact seem to
cover all the cases of bar-cobar duality for operads in the literature
that we are aware of.
\end{remark}

\begin{bibdiv}
  \begin{biblist}
\bib{AyalaFrancisFlagged}{article}{
  author={Ayala, David},
  author={Francis, John},
  title={Flagged higher categories},
  conference={ title={Topology and quantum theory in interaction}, },
  book={ series={Contemp. Math.}, volume={718}, publisher={Amer. Math. Soc., Providence, RI}, },
  date={2018},
  pages={137--173},
}

\bib{BaezDolanHDA3}{article}{
  author={Baez, John C.},
  author={Dolan, James},
  title={Higher-dimensional algebra. III. $n$-categories and the algebra of opetopes},
  journal={Adv. Math.},
  volume={135},
  date={1998},
  number={2},
  pages={145--206},
}

\bib{BarwickOpCat}{article}{
  author={Barwick, Clark},
  title={From operator categories to higher operads},
  journal={Geom. Topol.},
  volume={22},
  date={2018},
  number={4},
  pages={1893--1959},
  eprint={arXiv:1302.5756},
}

\bib{BarwickMackey}{article}{
  author={Barwick, Clark},
  title={Spectral {M}ackey functors and equivariant algebraic $K$-theory ({I})},
  journal={Adv. Math.},
  volume={304},
  date={2017},
  pages={646--727},
  eprint={arXiv:1404.0108},
  year={2014},
}

\bib{BoardmanVogt}{book}{
  author={Boardman, J. M.},
  author={Vogt, R. M.},
  title={Homotopy invariant algebraic structures on topological spaces},
  series={Lecture Notes in Mathematics, Vol. 347},
  publisher={Springer-Verlag, Berlin-New York},
  date={1973},
}

\bib{BrantnerThesis}{article}{
  author={Brantner, Lukas},
  title={The {L}ubin--{T}ate theory of spectral {L}ie algebras},
  date={2017},
  note={Available from \url {https://people.maths.ox.ac.uk/brantner/brantnerthesis.pdf}.},
}

\bib{ChingThesis}{article}{
  author={Ching, Michael},
  title={Bar constructions for topological operads and the Goodwillie derivatives of the identity},
  journal={Geom. Topol.},
  volume={9},
  date={2005},
  pages={833--933},
}

\bib{ChingBar}{article}{
  author={Ching, Michael},
  title={Bar-cobar duality for operads in stable homotopy theory},
  journal={J. Topol.},
  volume={5},
  date={2012},
  number={1},
  pages={39--80},
}

\bib{ChingHarper}{article}{
  author={Ching, Michael},
  author={Harper, John E.},
  title={Derived {K}oszul duality and {TQ}-homology completion of structured ring spectra},
  journal={Adv. Math.},
  volume={341},
  date={2019},
  pages={118--187},
  eprint={arXiv:1502.06944},
}

\bib{enropd}{article}{
  author={Chu, Hongyi},
  author={Haugseng, Rune},
  title={Enriched $\infty $-operads},
  eprint={arXiv:1707.08049},
  journal={Adv. Math.},
  volume={361},
  date={2020},
  pages={106913, 85},
}

\bib{patterns1}{article}{
  author={Chu, Hongyi},
  author={Haugseng, Rune},
  title={Homotopy-coherent algebra via {S}egal conditions},
  date={2019},
  eprint={arXiv:1907.03977},
}

\bib{patterns2}{article}{
  author={Chu, Hongyi},
  author={Haugseng, Rune},
  title={Free algebras through Day convolution},
  date={2020},
  eprint={arXiv:2006.08269},
}

\bib{iopdcomp}{article}{
  author={Chu, Hongyi},
  author={Haugseng, Rune},
  author={Heuts, Gijs},
  title={Two models for the homotopy theory of $\infty $-operads},
  journal={J. Topol.},
  volume={11},
  date={2018},
  number={4},
  pages={856--872},
  eprint={arXiv:1606.03826},
}

\bib{CisinskiMoerdijkDendSeg}{article}{
  author={Cisinski, Denis-Charles},
  author={Moerdijk, Ieke},
  title={Dendroidal Segal spaces and $\infty $-operads},
  journal={J. Topol.},
  volume={6},
  date={2013},
  number={3},
  pages={675--704},
}

\bib{CisinskiMoerdijkSimplOpd}{article}{
  author={Cisinski, Denis-Charles},
  author={Moerdijk, Ieke},
  title={Dendroidal sets and simplicial operads},
  journal={J. Topol.},
  volume={6},
  date={2013},
  number={3},
  pages={705--756},
  eprint={arXiv:1109.1004},
}

\bib{DayConv}{article}{
  author={Day, Brian},
  title={On closed categories of functors},
  conference={ title={Reports of the Midwest Category Seminar, IV}, },
  book={ series={Lecture Notes in Mathematics, Vol. 137}, publisher={Springer, Berlin}, },
  date={1970},
  pages={1--38},
}

\bib{DwyerHessKnots}{article}{
  author={Dwyer, William},
  author={Hess, Kathryn},
  title={Long knots and maps between operads},
  journal={Geom. Topol.},
  volume={16},
  date={2012},
  number={2},
  pages={919--955},
}

\bib{DwyerHess}{article}{
  author={Dwyer, William},
  author={Hess, Kathryn},
  title={The Boardman-Vogt tensor product of operadic bimodules},
  conference={ title={Algebraic topology: applications and new directions}, },
  book={ series={Contemp. Math.}, volume={620}, publisher={Amer. Math. Soc., Providence, RI}, },
  date={2014},
  pages={71--98},
}

\bib{FioreGambinoHylandWinskel}{article}{
  author={Fiore, M.},
  author={Gambino, N.},
  author={Hyland, M.},
  author={Winskel, G.},
  title={Relative pseudomonads, Kleisli bicategories, and substitution monoidal structures},
  journal={Selecta Math. (N.S.)},
  volume={24},
  date={2018},
  number={3},
  pages={2791--2830},
  eprint={arXiv:1612.03678},
}

\bib{FrancisGaitsgory}{article}{
  author={Francis, John},
  author={Gaitsgory, Dennis},
  title={Chiral Koszul duality},
  journal={Selecta Math. (N.S.)},
  volume={18},
  date={2012},
  number={1},
  pages={27--87},
}

\bib{FresseKoszul}{article}{
  author={Fresse, Benoit},
  title={Koszul duality complexes for the cohomology of iterated loop spaces of spheres},
  conference={ title={An alpine expedition through algebraic topology}, },
  book={ series={Contemp. Math.}, volume={617}, publisher={Amer. Math. Soc., Providence, RI}, },
  date={2014},
  pages={165--188},
  doi={10.1090/conm/617/12281},
}

\bib{FresseEn}{article}{
  author={Fresse, Benoit},
  title={Koszul duality of $E_n$-operads},
  journal={Selecta Math. (N.S.)},
  volume={17},
  date={2011},
  number={2},
  pages={363--434},
}

\bib{GambinoJoyal}{article}{
  author={Gambino, Nicola},
  author={Joyal, Andr\'e},
  title={On operads, bimodules and analytic functors},
  journal={Mem. Amer. Math. Soc.},
  volume={249},
  date={2017},
  number={1184},
}

\bib{enr}{article}{
  author={Gepner, David},
  author={Haugseng, Rune},
  title={Enriched $\infty $-categories via non-symmetric $\infty $-operads},
  journal={Adv. Math.},
  volume={279},
  pages={575--716},
  eprint={arXiv:1312.3178},
  date={2015},
}

\bib{freepres}{article}{
  author={Gepner, David},
  author={Haugseng, Rune},
  author={Nikolaus, Thomas},
  title={Lax colimits and free fibrations in $\infty $-categories},
  eprint={arXiv:1501.02161},
  journal={Doc. Math.},
  volume={22},
  date={2017},
  pages={1225--1266},
}

\bib{polynomial}{article}{
  author={Gepner, David},
  author={Haugseng, Rune},
  author={Kock, Joachim},
  title={$\infty $-operads as analytic monads},
  date={2017},
  eprint={arXiv:1712.06469},
}

\bib{GetzlerJones}{article}{
  author={Getzler, Ezra},
  author={Jones, J. D. S.},
  title={Operads, homotopy algebra and iterated integrals for double loop spaces},
  date={1994},
  eprint={arXiv:hep-th/9403055},
}

\bib{GinzburgKapranov}{article}{
  author={Ginzburg, Victor},
  author={Kapranov, Mikhail},
  title={Koszul duality for operads},
  journal={Duke Math. J.},
  volume={76},
  date={1994},
  number={1},
  pages={203--272},
}

\bib{GiraudoHab}{article}{
  author={Giraudo, Samuele},
  title={Operads in algebraic combinatorics},
  date={2017},
  eprint={arXiv:1712.03782},
}

\bib{GlasmanDay}{article}{
  author={Glasman, Saul},
  title={Day convolution for $\infty $-categories},
  journal={Math. Res. Lett.},
  volume={23},
  date={2016},
  number={5},
  pages={1369--1385},
  eprint={arXiv:1308.4940},
}

\bib{enrcomp}{article}{
  author={Haugseng, Rune},
  title={Rectifying enriched $\infty $-categories},
  journal={Algebr. Geom. Topol.},
  volume={15},
  issue={4},
  pages={1931--1982},
  eprint={arXiv:1312.3178},
  date={2015},
}

\bib{nmorita}{article}{
  author={Haugseng, Rune},
  title={The higher {M}orita category of $E_{n}$-algebras},
  date={2017},
  eprint={arXiv:1412.8459},
  journal={Geom. Topol.},
  volume={21},
  issue={3},
  pages={1631--1730},
}

\bib{spans}{article}{
  author={Haugseng, Rune},
  title={Iterated spans and classical topological field theories},
  journal={Math. Z.},
  volume={289},
  issue={3},
  pages={1427--1488},
  date={2018},
  eprint={arXiv:1409.0837},
}

\bib{spanalg}{article}{
  author={Haugseng, Rune},
  title={Segal spaces, spans, and semicategories},
  eprint={arXiv:1901.08264},
  journal={Proc. Amer. Math. Soc.},
  volume={149},
  date={2021},
  number={3},
  pages={961--975},
}

\bib{opdalg}{article}{
  author={Haugseng, Rune},
  title={Algebras for enriched $\infty $-operads},
  date={2019},
  eprint={arXiv:1909.10042},
}

\bib{cois}{article}{
  author={Haugseng, Rune},
  author={Melani, Valerio},
  author={Safronov, Pavel},
  title={Shifted coisotropic correspondences},
  date={2019},
  eprint={arXiv:1904.11312},
}

\bib{HeutsVn}{article}{
  author={Heuts, Gijs},
  title={Lie algebras and $v_n$-periodic spaces},
  eprint={arxiv:1803.06325},
  journal={Ann. of Math. (2)},
  volume={193},
  date={2021},
  number={1},
  pages={223--301},
}

\bib{HinichLoc}{article}{
  author={Hinich, Vladimir},
  title={Dwyer-Kan localization revisited},
  journal={Homology Homotopy Appl.},
  volume={18},
  date={2016},
  number={1},
  pages={27--48},
  eprint={arXiv:1311.4128},
}

\bib{HinichYoneda}{article}{
  eprint={arXiv:1805.07635},
  author={Hinich, Vladimir},
  title={Yoneda lemma for enriched $\infty $-categories},
  journal={Adv. Math.},
  volume={367},
  date={2020},
  pages={107129},
}

\bib{KellyOpd}{article}{
  author={Kelly, G. M.},
  title={On the operads of J. P. May},
  journal={Repr. Theory Appl. Categ.},
  number={13},
  date={2005},
  pages={1--13},
}

\bib{KrizMay}{article}{
  author={K\v r\'\i \v z, Igor},
  author={May, J. P.},
  title={Operads, algebras, modules and motives},
  journal={Ast\'erisque},
  number={233},
  date={1995},
}

\bib{LV}{book}{
  author={Loday, Jean-Louis},
  author={Vallette, Bruno},
  title={Algebraic operads},
  series={Grundlehren der Mathematischen Wissenschaften},
  volume={346},
  publisher={Springer, Heidelberg},
  date={2012},
  pages={xxiv+634},
  doi={10.1007/978-3-642-30362-3},
}

\bib{HTT}{book}{
  author={Lurie, Jacob},
  title={Higher Topos Theory},
  series={Annals of Mathematics Studies},
  publisher={Princeton University Press},
  address={Princeton, NJ},
  date={2009},
  volume={170},
  note={Available from \url {http://math.ias.edu/~lurie/}},
}

\bib{HA}{book}{
  author={Lurie, Jacob},
  title={Higher Algebra},
  date={2017},
  note={Available at \url {http://math.ias.edu/~lurie/}.},
}

\bib{MarklShniderStasheff}{book}{
  author={Markl, Martin},
  author={Shnider, Steve},
  author={Stasheff, Jim},
  title={Operads in algebra, topology and physics},
  series={Mathematical Surveys and Monographs},
  volume={96},
  publisher={American Mathematical Society, Providence, RI},
  date={2002},
}

\bib{MayGeomIter}{book}{
  author={May, J. P.},
  title={The geometry of iterated loop spaces},
  note={Lecture Notes in Mathematics, Vol. 271},
  publisher={Springer-Verlag, Berlin-New York},
  date={1972},
}

\bib{MendezSetOpds}{book}{
  author={M\'endez, Miguel A.},
  title={Set operads in combinatorics and computer science},
  series={SpringerBriefs in Mathematics},
  publisher={Springer, Cham},
  date={2015},
}

\bib{MoerdijkWeiss}{article}{
  author={Moerdijk, Ieke},
  author={Weiss, Ittay},
  title={Dendroidal sets},
  journal={Algebr. Geom. Topol.},
  volume={7},
  date={2007},
  pages={1441--1470},
}

\bib{RezkCSS}{article}{
  author={Rezk, Charles},
  title={A model for the homotopy theory of homotopy theory},
  journal={Trans. Amer. Math. Soc.},
  volume={353},
  date={2001},
  number={3},
  pages={973--1007},
}

\bib{SmirnovOperads}{book}{
  author={Smirnov, V. A.},
  title={Simplicial and operad methods in algebraic topology},
  series={Translations of Mathematical Monographs},
  volume={198},
  note={Translated from the Russian manuscript by G. L. Rybnikov},
  publisher={American Mathematical Society, Providence, RI},
  date={2001},
}

\bib{TrimbleLie}{article}{
  author={Trimble, Todd H.},
  title={Notes on the {L}ie operad},
  note={Available from \url {http://math.ucr.edu/home/baez/trimble/}.},
}

\bib{ValletteHtpyAlgs}{article}{
  author={Vallette, Bruno},
  title={Homotopy theory of homotopy algebras},
  eprint={arXiv:1411.5533},
  journal={Ann. Inst. Fourier (Grenoble)},
  volume={70},
  date={2020},
  number={2},
  pages={683--738},
}
\end{biblist}
\end{bibdiv}

\end{document}